\newcommand*{\sheafhom}{\mathscr{H}\kern -.5pt om}
\theoremstyle{definition}
\newtheorem{definition}{Definition}[subsection]
\newtheorem{example}[definition]{Example}
\theoremstyle{plain}
\newtheorem{theorem}[definition]{Theorem}
\newtheorem*{theoremA}{Theorem A}
\newtheorem*{theoremB}{Theorem B}
\newtheorem{corollary}[definition]{Corollary}
\newtheorem{proposition}[definition]{Proposition}
\newtheorem{lemma}[definition]{Lemma}
\theoremstyle{remark}
\newtheorem{remark}[definition]{Remark}
\numberwithin{equation}{definition}
\begin{document}

\title{On the construction of Quotient Spaces by Algebraic Foliations}
\author{Federico Bongiorno}
\address{Department of Mathematics, Imperial College London, 
180 Queen's Gate, London, SW7 2AZ, United Kingdom}
\email{federico.bongiorno14@imperial.ac.uk}

\begin{abstract}
Given a variety defined over a field of characteristic zero and an algebraically integrable foliation of corank less than or equal to two, we show the existence of a categorical quotient, defined on the non-empty open set of stable points, through which every invariant morphism factors uniquely.
\end{abstract}

\maketitle

\section{Introduction}
\label{sec:introduction}

An algebraically integrable foliation on a variety $X$ is a foliation induced by a rational map $X \dashedrightarrow Y$.
Different rational maps may induce the same foliation.
However, does an algebraically integrable foliation induce a canonical rational map?

\begin{theoremA}
Let $k$ be a field of characteristic zero and let $X$ be a normal integral scheme locally of finite type over $k$.
Suppose that $\mathscr{F}$ is an algebraically integrable foliation on $X$.
Let $X^{\mathrm{s}}$ denote the set of $\mathscr{F}$-stable points. Suppose that either
\begin{enumerate} 
\item \emph{(Theorem \ref{thm:main1}).} The first integrals of $\mathscr{F}$ are locally of finite type over $k$.
\item \emph{(Theorem \ref{thm:main2}).} The corank of $\mathscr{F}$ is less than or equal to two.
\end{enumerate}
Then $X^{\mathrm{s}}$ is a non-empty open set and there exists a geometric (and categorical) quotient of $X$ by $\mathscr{F}$, $\pi : X^{\mathrm{s}} \rightarrow Y$, where $Y$ is a normal integral scheme locally of finite type over $k$.
\end{theoremA}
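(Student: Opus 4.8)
The plan is to reduce the second hypothesis to the first, to build the quotient by patching local geometric quotients coming from rings of first integrals, and then to read off its universal property from the intrinsic description of those local quotients.

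\textbf{Step 1 (reducing (2) to (1)).} First I would introduce the field $K = k(X)^{\mathscr F} \subseteq k(X)$ of rational first integrals. Since $\mathscr F$ is algebraically integrable, $K$ is the function field of a normal model of the leaf space, so $\operatorname{trdeg}_k K$ equals the corank of $\mathscr F$; under hypothesis (2) this is at most two. For any normal affine open $\operatorname{Spec} A \subseteq X$ the ring of first integrals on that chart is $A \cap K$ (the intersection taken in $k(X) = \operatorname{Frac} A$), which is again normal, and by Zariski's finite generation theorem — the affirmative answer to Hilbert's fourteenth problem in transcendence degree at most two — it is a finitely generated $k$-algebra. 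Hence hypothesis (2) implies hypothesis (1), and from here on I assume the first integrals are locally of finite type over $k$.

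\textbf{Step 2 (local quotients, openness, non-emptiness).} Cover $X$ by normal affine opens $U_i = \operatorname{Spec} A_i$; put $B_i = A_i \cap K$, a normal finitely generated $k$-algebra, and let $\phi_i : U_i \to V_i := \operatorname{Spec} B_i$ be the induced morphism. After a (technical) shrinking of $U_i$ I would arrange that the generic fibre of $\phi_i$ is a leaf, i.e.\ that $\dim V_i$ equals the corank; then, since $\operatorname{char} k = 0$, generic smoothness together with generic flatness yields a dense open of $U_i$ on which $\phi_i$ is smooth, and as $K$ is algebraically closed in $k(X)$ the fibres there are geometrically integral of dimension $\operatorname{rk}\mathscr F$, hence are exactly the leaves of $\mathscr F$. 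On such a dense open $\phi_i$ is a geometric quotient, so its points are $\mathscr F$-stable; this gives that $X^{\mathrm s}$ is non-empty, and being the union of the open neighbourhoods admitting a local quotient it is open. Finally, on every stable chart the structure sheaf of the local quotient is the sheaf of $\mathscr F$-invariant regular functions, which is of the form $A \cap K$ on affines and so — by Step 1 — makes each local quotient normal and locally of finite type over $k$.

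\textbf{Step 3 (patching and the universal property; the hard part).} The local quotient attached to a stable chart $U$ is canonical: as a topological space it is the leaf space of $U$, and its structure sheaf is $(\phi_* \mathcal O_U)^{\mathscr F}$, both depending only on $\mathscr F|_U$. Hence, for two stable charts, the two local quotients restrict to canonically isomorphic schemes over the image of the overlap, and these isomorphisms automatically satisfy the cocycle condition; gluing the local quotients along them produces a normal integral scheme $Y$, locally of finite type over $k$, together with a morphism $\pi : X^{\mathrm s} \to Y$ which is flat, surjective and submersive, satisfies $\mathcal O_Y = (\pi_* \mathcal O_{X^{\mathrm s}})^{\mathscr F}$, and has the leaves of $\mathscr F|_{X^{\mathrm s}}$ as fibres — so $\pi$ is a geometric quotient. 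For the categorical property, given any $\mathscr F$-invariant morphism $g : X^{\mathrm s} \to Z$, its comorphism sends regular functions on $Z$ to $\mathscr F$-invariant functions, i.e.\ into $\mathcal O_Y$, so $g$ factors through $\pi$ on each chart; uniqueness lets the factorizations be glued to a unique $\bar g : Y \to Z$ with $g = \bar g \circ \pi$. The step I expect to be the main obstacle is exactly the canonicity invoked above: proving that near a stable point $x$ the stalk of $\mathscr F$-invariant germs $(\mathcal O_{X,x})^{\mathscr F}$ is genuinely the local ring of the quotient, which is what forces the local models to agree. The subtlety is that $A \cap K$ behaves badly under localization — $(A_f)\cap K$ need not equal $(A\cap K)_f$ when $f \notin K$ — so one cannot simply restrict local quotients; the substance of the theorem is that on the stable locus this pathology vanishes, and it is here, together with the finite generation of $A \cap K$, that hypotheses (1) and (2) are really used.
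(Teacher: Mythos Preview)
Your proposal is correct and follows essentially the same architecture as the paper: reduce hypothesis (2) to (1) via Nagata's theorem on $A\cap K$ with $\operatorname{trdeg}_k K\le 2$, show the generic point is stable to get $X^{\mathrm s}$ non-empty and open, build local geometric quotients as $\operatorname{Spec}$ of first integrals on stable charts, glue them, and deduce categoricity from the identification $\mathcal O_Y=(\pi_*\mathcal O_{X^{\mathrm s}})^{\mathscr F}$. You have also correctly isolated the crux, namely that $(\mathcal O_{X,x})^{\mathscr F}$ coincides with the local ring of the quotient at $\pi(x)$ (the paper's Lemma on local exactness), which is precisely what makes the local models compatible and where the flatness in the definition of stability, together with the birationality forced by algebraic closedness of $K$ in $k(X)$, is genuinely used.
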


The terminology used in the above theorem is motivated and explained in the next subsections by an extensive comparison with group actions and Geometric Invariant Theory (GIT).
For now, let us mention that a \emph{categorical quotient} of $X$ by $\mathscr{F}$ is a universal $\mathscr{F}$-invariant morphism (Definition \ref{def:categorical_quotient}), a \emph{geometric quotient} represents algebraic leaves, rather than orbits (Definition \ref{def:geom_quotients}) and a point is $\mathscr{F}$-stable if there exists a neighborhood whose induced morphism to its first integrals is smooth with connected $\mathrm{rank} \mathscr{F}$-dimensional fibres (Definition \ref{def:stability}). \par 

Throughout the course of the proof, there is one result which the author believes to be of independent interest.

\begin{theoremB}[Corollary \ref{cor:fin_gen}]
Let $k$ be a field of characteristic zero and let $X$ be a normal integral scheme locally of finite type over $k$.
Suppose that $\mathscr{F}$ is a torsion-free distribution and $\mathrm{corank}\,\mathscr{F} \leq 2$.
Then the first integrals of $\mathscr{F}$ are locally of finite type over $k$.
\end{theoremB}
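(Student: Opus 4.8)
The plan is to reduce the statement to the finite generation of a ring of invariants and then invoke Zariski's theorem on subfields of transcendence degree at most two. Write $L \subseteq K(X)$ for the field of rational first integrals of $\mathscr{F}$, i.e. the set of $f \in K(X)$ whose differential $df \in \Omega^1_{K(X)/k}$ is annihilated by $\mathscr{F}$ under the natural pairing $\Omega^1_{K(X)/k} \times (\mathscr{F} \otimes K(X)) \to K(X)$. Since $X$ is normal, integral and locally of finite type over $k$, it is covered by affine opens $U = \operatorname{Spec} A$ with $A$ a normal, finitely generated $k$-domain and $\operatorname{Frac}(A) = K(X)$, and on each such $U$ the first integrals are modelled by the subring $A \cap L$. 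It therefore suffices to show that every such $A \cap L$ is a finitely generated $k$-algebra, and then to glue.

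First I would bound the transcendence degree, claiming $\operatorname{trdeg}_k L \leq \operatorname{corank} \mathscr{F} \leq 2$. At the generic point of $X$ the torsion-free distribution $\mathscr{F}$ is a $K(X)$-subspace of the tangent space of dimension $\operatorname{rank} \mathscr{F}$, so its annihilator is a $K(X)$-subspace $V \subseteq \Omega^1_{K(X)/k}$ of dimension exactly $\operatorname{corank} \mathscr{F}$, and by construction $df \in V$ for every $f \in L$. In characteristic zero the Jacobian criterion identifies $\operatorname{trdeg}_k k(f_1, \dots, f_m)$ with $\dim_{K(X)} \langle df_1, \dots, df_m \rangle \subseteq \Omega^1_{K(X)/k}$ for any $f_1, \dots, f_m \in K(X)$; applying this to finite subsets of $L$ and taking the supremum gives $\operatorname{trdeg}_k L \leq \dim_{K(X)} V = \operatorname{corank} \mathscr{F} \leq 2$.

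Next I would invoke Zariski's theorem: if $L$ is a subfield of a finitely generated field extension $K/k$ with $\operatorname{trdeg}_k L \leq 2$, then for every normal finitely generated $k$-subalgebra $A \subseteq K$ the intersection $A \cap L$ is a finitely generated $k$-algebra (Zariski's affirmative answer, in transcendence degree $\leq 2$, to the generalized Hilbert fourteenth problem). Applying this with $K = K(X)$ and $A$ ranging over the affine charts above shows that the first integrals are of finite type over $k$ on each chart; since these local models are cut out inside the fixed field $L$ they are compatible on overlaps, so they glue to a scheme locally of finite type over $k$, as claimed.

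The crux — and the only point at which the hypothesis $\operatorname{corank} \mathscr{F} \leq 2$ is used essentially — is Zariski's theorem: there is no elementary substitute, since in transcendence degree $\geq 3$ the analogous finiteness fails (Nagata's counterexamples to Hilbert's fourteenth problem), so corank-three foliations genuinely need not have first integrals of finite type. By contrast, the transcendence-degree bound and the gluing are routine once the field of rational first integrals and its local models have been set up.
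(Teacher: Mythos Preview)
Your proposal is correct and follows essentially the same route as the paper: identify the first integrals on an affine open $U=\operatorname{Spec} B$ as $B\cap M$ where $M=\mathscr{O}_{X,\eta}^{\mathscr{F}}$ is the field of rational first integrals, bound $\operatorname{trdeg}_k M\le \operatorname{corank}\mathscr{F}\le 2$, and apply the Zariski--Nagata theorem (the paper cites Nagata \cite{MR88034}, you cite Zariski; it is the same result). Two small remarks: the identification $\mathscr{O}_X^{\mathscr{F}}(U)=B\cap M$ is exactly where the torsion-free hypothesis enters (so that $\mathscr{F}(U)\hookrightarrow\mathscr{F}_\eta$), and you should make that explicit; and the gluing you describe at the end is unnecessary, since the conclusion ``first integrals locally of finite type'' is by definition a statement about each affine open separately.
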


\subsection{Motivation}

Studying morphisms between schemes is at the core of the classification problem in algebraic geometry.
To this end, consider the following association
$$
\left\{
\begin{array}{c}
\pi : X \rightarrow Y \, | \, \text{$\pi$ is morphism}\\
\text{of schemes over a scheme $S$}
\end{array}
\right\}
\xrightarrow{\quad\alpha\quad}
\left\{
\begin{array}{c}
\Omega_{X/S}^1 \rightarrow \mathscr{F} \, | \,  \text{$\mathscr{F}$ is a}\\
\text{quasi-coherent quotient sheaf}
\end{array}
\right\},
$$
where $\alpha$ takes a morphism $\pi$ to the surjective morphism of quasi-coherent sheaves
$$ \Omega_{X/S}^1 \rightarrow \Omega_{X/Y}^1. $$
The objects on the right-hand side are called \emph{distributions}.
$\alpha$ is not a bijective correspondence.
Nonetheless, one may ask the following two questions:
\begin{enumerate}
\item
What is the image of $\alpha$ and how can it be characterised?
\item
Does an inverse map exist?
\end{enumerate}
Question (1) is extremely challenging.
Up to torsion, these are called \emph{algebraically integrable foliations}.
Certainly, one necessary condition for a distribution $\mathscr{F}$ to be algebraically integrable is that the exterior derivative of the exterior algebra of $\Omega_{X/S}^1$ descends to the exterior algebra of $\mathscr{F}$ ($\mathscr{F}$ is a \emph{foliation}).
Ekedahl, Shepherd-Barron and Taylor conjecture that if $\mathscr{F}$ is also $p$-closed for all $p$ large enough, then, up to torsion, $\mathscr{F}$ is algebraically integrable.
Bost proves this in \cite{MR1863738} under additional hypotheses. \par

The answer to Question (2) is negative. For instance, the relative sheaf of differentials of any unramified morphism is zero.
Nonetheless, given a distribution $\mathscr{F}$, one may ask if, among all morphisms compatible with $\mathscr{F}$ there is a universal one.
More precisely, define a morphism $\pi : X \rightarrow Y$ to be \emph{$\mathscr{F}$-invariant} (Definition \ref{def:invariant_morphisms}) if there exists a factorisation
$$ \Omega_{X/S}^1 \rightarrow \Omega_{X/Y}^1 \rightarrow \mathscr{F}.$$

\textbf{Main question.}
Let $X$ be a scheme over $S$ and $\mathscr{F}$ a distribution on $X$ over $S$.
Under what assumption, does there exist an $\mathscr{F}$-invariant $S$-morphism $\pi : X \rightarrow Y$ such that for any $\mathscr{F}$-invariant $S$-morphism $\pi^{\prime} : X \rightarrow Y^{\prime}$, there exists a unique morphism $f : Y \rightarrow Y^{\prime}$ such that $\pi^{\prime} = f \circ \pi$? \par

If such a $\pi$ exists, it is unique and it is the \emph{categorical quotient} of $X$ by $\mathscr{F}$ (Definition \ref{def:categorical_quotient}).
As an example, if $\mathscr{F} = 0$, the categorical quotient is the identity morphism.

\begin{example}[Foliation by parabolas]
\label{ex:straight}
Let $k = \mathbb{C}$, $B = k[x,y]$ and $X = \mathbb{A}_k^2 = \mathrm{Spec}\,B$.
Define a foliation $\mathscr{F}$ to be the sheaf associated to the module $F$, where $F$ is the quotient of $\Omega_{X/k}^1 = B \langle dx \rangle \oplus B \langle dy \rangle$ by the submodule $(dy - 2x \cdot dx)$.
After dualising, this corresponds to the inclusion of the vector field
$$ \partial = \frac{\partial}{\partial x} + 2x \cdot \frac{\partial}{\partial y} $$
into the tangent sheaf of $X$. \par

The closed subvarieties $\iota_c : Z_c \hookrightarrow X$ defined by the equation $y = x^2 + c$, as $c$ varies in $k$, are everywhere tangent to $\partial$.
This can be seen from the fact that the pullback $\iota_c^*\,\Omega_{X/k}^1 \rightarrow \Omega_{Z_c/k}^1$ factors into
$$ \iota_c^*\,\Omega_{X/k}^1 \rightarrow \iota_c^*\,\mathscr{F} \rightarrow \Omega_{Z_c/k}^1.$$
When the induced map $\iota_c^*\,\mathscr{F} \rightarrow \Omega_{Z_c/k}^1$ is an isomorphism, $Z_c$ is an \emph{algebraic leaf} of $\mathscr{F}$. \par

The algebraic leaves $Z_c$ can be computed in another way.
Let $d_F$ denote the composition of the exterior derivative $d : B \rightarrow  \Omega_{B/k}^1$ with the quotient $\Omega_{B/k}^1 \rightarrow F$.
This is the \emph{foliated exterior derivative}.
Define
$$ A = \mathrm{ker} \left( B \xrightarrow{\quad d_F \quad} F \right).$$
$A$ is a subring of $B$ called the \emph{ring of first integrals} of $\mathscr{F}$.
In this case, it is given by $A = k[\,y - x^2\,]$.
Observe that the geometric fibres of the morphism
$$ \mathrm{Spec}\, B \rightarrow  \mathrm{Spec}\, A $$
are in bijective correspondence with the algebraic leaves $Z_c$, as $c$ varies in $k$.
\end{example}

A first attempt to construct the categorical quotient is to define a moduli functor of families of algebraic leaves (Definition \ref{def:functor_algebraic_leaves}) and construct the moduli space.
This is hard (Remark \ref{rem:moduli_space}).
Instead, one may try to classify only families of algebraic leaves over a point and hope it is enough to construct the categorical quotient.
This is the main idea of David Mumford's Geometric Invariant Theory (GIT) to construct the categorical quotient of a group acting on a scheme.
In fact, the two problems share many characteristics and the methods employed in this article are similar to either \cite{MR1304906} or \cite{MR1432041}.

\subsection{Group Actions and Foliations}

When taking a quotient of a variety by a group action, it is clear that the the field of fractions of the resulting variety should be equal to the field of rational $G$-invariant functions.
The aim of GIT is to choose a model among this birational equivalence class whose geometric points parametrise the set of orbits of the action.
Similarly, it is equally clear that when taking a quotient of a variety by a foliation, the field of fractions of the resulting variety should be equal to the field of \emph{rational first integrals} (or rational $\mathscr{F}$-invariant functions) (Definition \ref{def:local_first_integrals}).
Indeed, in \cite{MR1017286} and \cite[\S 2.3]{MR3842065}, the authors construct a quotient space, up to birational equivalence, which parametrises the general algebraic leaf.
However, in this birational equivalence class, there shall exist a quotient whose geometric points parametrise the algebraic leaves of the foliation.

To motivate the definition of a geometric quotient of $X$ by $\mathscr{F}$ (Definition \ref{def:geom_quotients}), let us make the analogy between group actions and foliations more precise and explore the relation between $G$-invariance and $\mathscr{F}$-invariance. \par

Let $X$ be a scheme over $S$. A group action $G$ on $X$ over $S$ induces an algebraically integrable foliation on $X$ over $S$ in the following way.
Let $p_1$ and $p_2$ denote the projections of $X \times_S X$ to the first and second factor respectively.
A group action is a morphism $G \times_S X \rightarrow X$.
This induces a morphism $G \times_S X \rightarrow X \times_S X$ whose composition with $p_1$ is the group action and whose composition with $p_2$ is the projection of $G \times_S X$ to the second factor.
Let $X \rightarrow G \times_S X$ be the morphism whose components are the constant morphism to the identity of the group $e_G$ and the identity morphism to $X$.
Using the properties of the group action, it follows that the diagonal morphism $\Delta : X \rightarrow X \times_S X$ factors into
$$ X \rightarrow G \times_S X \rightarrow X \times_S X.$$
Let $R_G$ denote the scheme-theoretic image of $G \times_S X \rightarrow X \times_S X$.
There is a sequence of morphisms
$$ X \rightarrow R_G \rightarrow X \times_S X.$$
The conormal sheaf of $\Delta$ is $\Omega_{X/S}^1$.
By \cite[\href{https://stacks.math.columbia.edu/tag/07RK}{Lemma 07RK}]{stacks-project}, $X \rightarrow R_G$ is an immersion.
Let $\mathscr{F}_G$ denote its conormal sheaf.
By \cite[\href{https://stacks.math.columbia.edu/tag/062S}{Lemma 062S}]{stacks-project} , there is a surjection
$$ \Omega_{X/S}^1 \rightarrow \mathscr{F}_G.$$
Hence $\mathscr{F}_G$ is a distribution on $X$ over $S$.
It is easy to see that it is in fact an algebraically integrable foliation. \par

A local section of $\mathscr{O}_X$ is $G$-invariant if it is in the kernel of
\begin{align}
\label{eq:group_kernel}
\mathscr{O}_X \xrightarrow{p_1^* - p_2^*} {p_2}_* \mathscr{O}_{G \times_S X}.
\end{align}
For a distribution $q : \Omega_{X/S}^1 \rightarrow \mathscr{F}$, a local section of $\mathscr{O}_X$ is $\mathscr{F}$-invariant (or it is a first integral) if it is in the kernel of
\begin{align}
\label{eq:foliation_kernel}
\mathscr{O}_X \xrightarrow{q \circ d} \mathscr{F}.
\end{align}
If $S$ is the spectrum of a field, $G$ is geometrically reduced over $S$ and $X$ is reduced and separated, then a $G$-invariant local section of $\mathscr{O}_X$ is also $\mathscr{F}_G$-invariant.
The converse is not true: any action by a finite group induces a torsion foliation. Therefore, any section of $\mathscr{O}_X$ defined over an open set not intersecting the support of $\mathscr{F}_G$ is $\mathscr{F}_G$-invariant.

\subsection{Comparison of Geometric Quotients}
Now let $\mathscr{F}$ be an arbitrary algebraically integrable foliation on $X$ over $S$ and let $\pi : X \rightarrow Y$ be an $S$-morphism.
The following table compares the definitions of geometric quotient of a scheme $X$ by a group action and by a foliation.
\begin{center}
\bgroup
\def\arraystretch{1.4}
\begin{tabular}{ |p{59mm}|p{59mm}|}
\hline
\multicolumn{2}{|c|}{\textbf{Geometric Quotient} }\\
\hline
\textbf{By a group $G$ acting on $X$}& \textbf{By a foliation $\mathscr{F}$ on $X$}\\
\hline
\hline
$\pi$ is $G$-invariant& $\pi$ is $\mathscr{F}$-invariant\\
\hline
Geometric points of $\pi$ represent set of orbits& Geometric
points of $\pi$ represent set of algebraic leaves \\
\hline
$\pi$ is universally open& $\pi$ is universally open\\
\hline
$0 \rightarrow \mathscr{O}_Y \rightarrow \pi_* \mathscr{O}_X \rightarrow \pi_* \mathscr{O}_{G \times_S X}$ is exact& $0 \rightarrow \pi^{-1} \mathscr{O}_Y \rightarrow \mathscr{O}_X \rightarrow \mathscr{F}$ is exact  \\
\hline
\end{tabular}
\egroup
\end{center}

Here the morphism $\pi_* \mathscr{O}_X \rightarrow \pi_* \mathscr{O}_{G \times_S X}$ is the pushforward of the morphism in (\ref{eq:group_kernel}) and the morphism $\mathscr{O}_X \rightarrow \mathscr{F}$ is the morphism in (\ref{eq:foliation_kernel}).
This is where the greatest difference between the two definitions lies.
The former is local on $Y$, the latter is local on $X$.
The former does not behave well under localisation on $X$ (Example \ref{ex:radii}), whereas the latter allows for glueing (Lemma \ref{lem:open_geometric_quotient}).
On the other hand, the latter definition behaves poorly under arbitrary base change. \par

As with GIT, a geometric quotient of $X$ by $\mathscr{F}$ is the categorical quotient of $X$ by $\mathscr{F}$ (Proposition \ref{prop:geometric_categorical_quotient}). In particular, it is unique.
The strategy employed to construct the geometric quotient of $X$ by $\mathscr{F}$ is basic: compute the ring of first integrals over suitable open sets and glue.
There are two main difficulties. It has to be shown that, over suitable open sets, the rings of first integrals of $\mathscr{F}$ 
\begin{enumerate}
\item Are finitely generated.
\item Can be glued together.
\end{enumerate}
As with GIT, the rings of first integrals of a foliation need not be finitely generated (Remark \ref{rem:fin_gen}).
On the other hand, we were able to show finite generation when the corank of $\mathscr{F}$ is less that or equal to two and $X$ is normal (Corollary \ref{cor:fin_gen}).
In general, a guiding principle is that a reductive group $G$ should give rise to a foliation with at most log-canonical singularities and foliations with at most log-canonical singularities should have finitely generated first integrals. \par

With regards to (2), the notion of \emph{stability} is introduced (Definition \ref{def:stability}).
This allows for glueing first integrals (Proposition \ref{prop:stable_geometric quotient}).
The nomenclature reflects the corresponding notion in GIT, however it does not depend on a linearisation, instead it depends on the properties of the first integrals in a neighborhood of a point.
It does not seem easy to check wether a given point is $\mathscr{F}$-stable, nonetheless we were able to show that, in good circumstances, the generic point of an integral scheme is $\mathscr{F}$-stable (Proposition \ref{prop:generic_stability}).

\begin{example}[Foliation by radii]
\label{ex:radii}
Let $k$ be a field of characteristic zero, let $B = k[x,y]$ and let $X = \mathrm{Spec} \, B$.
Suppose that $k^{\times}$ acts on $X$ with weights $(1,1)$.
This induces a foliation $\mathscr{F}$ on $X$.
Note that $\mathscr{F}$ is the sheaf associated to the module $F$, where $F$ is the quotient of $\Omega_{X/k}^1 = B \langle dx \rangle \oplus B \langle dy \rangle$ by the submodule $(x \cdot dy - y \cdot dx)$.
After dualising, this corresponds to the inclusion of the vector field
vector field $x \partial/\partial x + y \partial/\partial y$ into the tangent sheaf of $X$.
This is represented in the figure below.
\begin{center}
\begin{tikzpicture}
\draw[thick] (-3,0) -- (3,0) node[anchor=north west] {$x$};
\draw[thick] (0,-2) -- (0,2) node[anchor=south east] {$y$};
\foreach \x in {-2.5,-2,-1.5,-1,-0.5,0.5,1,1.5,2,2.5}
  \foreach \y in {-1.5,-1,-0.5,0,0.5,1,1.5}
    \draw[->] (\x,\y) -- (1.2*\x,1.2*\y);
\foreach \y in {-1.5,-1,-0.5,0.5,1,1.5}
  \draw[->] (0,\y) -- (0,1.2*\y);
\draw[thick, gray] (-3,0) -- (3,0);
\draw[thick, gray] (-3,1) -- (3,-1);
\draw[thick, gray] (-3,2) -- (3,-2);
\draw[thick, gray] (-2,2) -- (2,-2);
\draw[thick, gray] (-1,2) -- (1,-2);
\draw[thick, gray] (0,2) -- (0,-2);
\draw[thick, gray] (1,2) -- (-1,-2);
\draw[thick, gray] (2,2) -- (-2,-2);
\draw[thick, gray] (3,2) -- (-3,-2);
\draw[thick, gray] (3,1) -- (-3,-1);
\end{tikzpicture}
\end{center}
Let us point out the difference between orbits and algebraic leaves.
The orbits of the action are given by the origin $(0,0)$ and the lines $c_x x + c_y y = 0$ without the origin, where $c_x$ and $c_y$ are not both zero in $k$.
On the other hand, there are no algebraic leaves.
It may appear that the closed subscheme $Z$ cut out by, say, $y = 0$ is an algebraic leaf.
However, the morphism
$$ F|_Z = \frac{k[x] \langle dx \rangle \oplus k[x] \langle dy  \rangle }{(x \cdot dy)} \rightarrow k[x] \langle dx \rangle = \Omega_{Z/k}^1 $$
is not an isomorphism.
The problem is that the foliation has a singularity at the origin. Declare the origin to be \emph{unstable} and remove it.
Let $X^{\mathrm{s}}$ denote the complement of the origin in $X$.
The closed subschemes of $X^{\mathrm{s}}$ cut out by $c_x x + c_y y = 0$, where $c_x$ and $c_y$ are not both zero in $k$, are now algebraic leaves, as well as orbits for the action of $k^{\times}$. \par

Observe that the first integrals of $F$ (or the $k^{\times}$-invariant functions) over $X$ are only the constant functions in $k$.
The same is true when restricting to $X^{\mathrm{s}}$.
However, when restricting to the distinguished affine open set $D(x)$ where $x$ is non-zero, one can compute that
$$ d \left( \frac{y}{x} \right) = \frac{1}{y^2} (y \cdot dx - x \cdot dy) = 0 \in F.$$
Hence $\frac{y}{x}$ is a first integral of $F$ over $D(x)$.
More precisely, $k\left[\frac{y}{x}\right]$ is the ring of first integrals of $F$ over $D(x)$.
Observe that the ring of first integrals over $D(x)$ is not a localisation of the ring of first integrals over $X$.
Similarly, $k\left[\frac{x}{y}\right]$ is the ring of first integrals of $F$ over $D(y)$.
On the intersection $D(xy)$, the ring of first integrals is $k\left[\frac{y}{x},\frac{x}{y}\right]$.
Since $D(x)$ and $D(y)$ cover $X^{\mathrm{s}}$, the first integrals can be glued together along their intersection to obtain a quotient morphism
$$ \pi : \mathbb{A}_k^2 \setminus \{0\} \rightarrow \mathbb{P}_k^1.$$
The fibres of $\pi$ are in bijection with the set of algebraic leaves and one can show that $\pi$ is the geometric and categorical quotient of $X^{\mathrm{s}}$ by $\mathscr{F}$.
\end{example}

\begin{example}[Foliation by hyperbolae]
\label{ex:hyperbolae}
Let $k$ be a field of characteristic zero, let $B = k[x,y]$ and $X = \mathrm{Spec} \, B$.
Suppose that $k^{\times}$ acts on $X$ with weights $(1,-1)$.
This induces a foliation $\mathscr{F}$ on $X$.
Note that $\mathscr{F}$ is the sheaf associated to the module $F$, where $F$ is the quotient of $\Omega_{X/k}^1 = B \langle dx \rangle \oplus B \langle dy \rangle$ by the submodule $(x \cdot dy + y \cdot dx)$.
After dualising, this corresponds to the inclusion of the vector field
vector field $x \partial/\partial x - y \partial/\partial y$ into the tangent sheaf of $X$.
This is represented in the figure below.
\begin{center}
\begin{tikzpicture}
\draw[thick] (-3,0) -- (3,0) node[anchor=north west] {$x$};
\draw[thick] (0,-2) -- (0,2) node[anchor=south east] {$y$};
\foreach \x in {-2.5,-2,-1.5,-1,-0.5,0.5,1,1.5,2,2.5}
  \foreach \y in {-1.5,-1,-0.5,0,0.5,1,1.5}
    \draw[->] (\x,\y) -- (1.2*\x,0.8*\y);
\foreach \y in {-1.5,-1,-0.5,0.5,1,1.5}
  \draw[->] (0,\y) -- (0,0.8*\y);
\foreach \k in {-4.5,-3.5,-2.5,-1.5,-1,-0.5,-0.2}
  \draw[thick, gray] (0,0) plot[domain=-3:(\k/2)] (\x,\k/\x);
\foreach \k in {-4.5,-3.5,-2.5,-1.5,-1,-0.5,-0.2}
  \draw[thick, gray] (0,0) plot[domain=-3:(\k/2)] (\x,-\k/\x);
\foreach \k in {4.5,3.5,2.5,1.5,1,0.5,0.2}
  \draw[thick, gray] (0,0) plot[domain=(\k/2):3] (\x,\k/\x);
\foreach \k in {4.5,3.5,2.5,1.5,1,0.5,0.2}
  \draw[thick, gray] (0,0) plot[domain=(\k/2):3] (\x,-\k/\x);
\end{tikzpicture}
\end{center}
This example shows a different phenomenon.
The orbits of the action are the hyperbolae given by the equation $xy = c$ for some non-zero $c \in k$, the punctured $x$-axis, the punctured $y$-axis and the origin.
On the other hand, only the non-trivial hyperbolae are algebraic leaves.
Indeed, the closed subscheme $Z$ cut out by $xy=0$ has the property that $\Omega_{Z/k}^1 = F|_Z$, however, it is not integral. \par

The ring of first integrals over $X$ gives an apparent satisfactory answer: the inclusion of rings $k[xy] \rightarrow k[x,y].$
Note that the non-special fibres of the morphism are non-degenerate hyperbolae, whereas the special fibre is the union of the $x$-axis and the $y$-axis.
It thus identifies three orbits of the action.
The induced morphism of spectra $X \rightarrow \mathbb{A}_k^1$ is a categorical quotient of $X$ by $\mathscr{F}$ but it is not a geometric quotient of $X$ by $\mathscr{F}$ since its fibres are not integral.
Requiring algebraic leaves to be integral ensures that the geometric quotient of an open subset $U \subseteq X$ by $\mathscr{F}$ is an open subset of the geometric quotient of $X$ by $\mathscr{F}$ (Lemma \ref{lem:open_geometric_quotient}).
Once again, the issue is the origin.
The singularity of the foliation does not allow one to separate the two algebraic leaves meeting at the origin.
Declare the origin to be unstable and remove it.
Let $X^{\mathrm{s}}$ denote the complement of the origin in $X$.
Now, there are two more algebraic leaves given by the punctured $x$-axis and the punctured $y$-axis.
One can compute the ring of first integrals of $F$ over $D(x)$, $D(y)$ and $D(xy)$ to be respectively $k[xy]$, $k[xy]$ and $k\left[ xy, \frac{1}{xy} \right]$.
Glueing the rings gives rise to a quotient morphism
$$ \pi : \mathbb{A}_k^2 \setminus \{0\} \rightarrow Y,$$
where $Y$ is the affine line with a double origin.
The fibres of $\pi$ over the two origins correspond to the $x$-axis and the $y$-axis.
Whilst $X^{\mathrm{s}}$ is an open subset of $X$, $Y$ is not an open subset of $\mathbb{A}_k^1$.
\end{example}

\begin{example}[Foliation by hyperbolae in three dimensions]
\label{ex:hyperbolae3d}
Let $k$ be a field of characteristic zero, let $B = k[x,y,z]$ and let $X = \mathrm{Spec} \, B$.
Suppose that $k^{\times}$ acts on $X$ with weights $(-1,1,1)$.
This induces a foliation on $X$ whose corresponding vector field is $-x \partial/\partial x + y \partial/\partial y + z \partial/\partial z$.
This example combines both previous examples.
The leaf (or orbit) passing through a point not lying in the closed subscheme defined by the ideal $(xy,xz)$ is an hyperbola.
On the other hand, when either $x = 0$ or both $y = 0$ and $z = 0$, the foliation is represented in the figure below.
\begin{center}
\tdplotsetmaincoords{70}{145}
\begin{tikzpicture} [scale=2.2, tdplot_main_coords, axis/.style={->,thick}]

\draw[axis] (0,0,0) -- (-1.2,0,0) node[anchor=south west]{$x$};
\draw[axis] (0,0,0) -- (0,-1.2,0) node[anchor=east]{$y$};
\draw[axis] (0,0,0) -- (0,0,1.2) node[anchor=south]{$z$};

\draw[thick,tdplot_main_coords] (0,1,1)-- (0,-1,1) -- (0,-1,-1)--(0,1,-1)--cycle;
\draw[thick,tdplot_main_coords] (0,0,0) -- (2,0,0);
\draw[thick,tdplot_main_coords] (-0.7,0,0) -- (-2,0,0);

\draw[thick, gray, tdplot_main_coords] (0,-0.6,-1) -- (0,0.6,1);
\draw[thick, gray, tdplot_main_coords] (0,-1,-0.6) -- (0,1,0.6);
\draw[thick, gray, tdplot_main_coords] (0,0.6,-1) -- (0,-0.6,1);
\draw[thick, gray, tdplot_main_coords] (0,1,-0.6) -- (0,-1,0.6);
\draw[thick, gray, tdplot_main_coords] (0,0,-1) -- (0,0,1);
\draw[thick, gray, tdplot_main_coords] (0,-1,0) -- (0,1,0);

\foreach \y in {-0.75,-0.5,-0.25,0.25,0.5,0.75}
  \foreach \z in {-0.75,-0.5,-0.25,0,0.25,0.5,0.75}
    \draw[->, tdplot_main_coords] (0,\y,\z) -- (0,1.2*\y,1.2*\z);
\foreach \z in {-0.75,-0.5,-0.25,0.25,0.5,0.75}
  \draw[->, tdplot_main_coords] (0,0,\z) -- (0,0,1.2*\z);
\end{tikzpicture}
\end{center}
Declaring the origin to be unstable allows to decompose the closed subscheme cut out by $xy = 0$ and $xz = 0$ into algebraic leaves.
Glueing the rings of first integrals over the distinguished open sets $D(x)$, $D(y)$ and $D(z)$ gives rise to a quotient morphism
$$ \pi : \mathbb{A}_k^3 \setminus \{0\} \rightarrow Y,$$
where $Y$ is the affine plane with a double origin of which one blown up.
The fibre over the origin which was not blown up is the algebraic leaf given by the $x$-axis.
The fibre over a point in the exceptional divisor is the algebraic leaf given by a punctured line in the $x = 0$ plane passing through the origin.
\end{example}

\subsection{Outline}
The article is organised as follows.
In \S \ref{sec:foliations}, the basic objects of study are introduced. These are distributions, foliations, tangent subvarieties, algebraic leaves and invariant morphisms.
In some cases, the definitions are non-standard, however, when this happens, it is pointed out in a subsequent remark.
In \S \ref{sec:quotient_spaces}, the notions of categorical and geometric quotients are defined.
It is shown that a geometric quotient is categorical and that geometric quotients can be glued together.
In \S \ref{sec:first_integrals}, the notion of first integral is introduced and their finite generation is proven in the special case above.
In \S \ref{sec:stability}, a stability condition is introduced and it is shown that there exists a geometric quotient in a neighborhood of a stable point. Furthermore, it is shown that the generic point is stable. 
In \S \ref{sec:main_theorems}, the main theorems are stated and proven.

\subsection{Acknowledgements}
I would firstly like to thank my advisor Paolo Cascini for the many insightful discussions held throughout these years and for introducing me to the Minimal Model Program and algebraic foliations.
I thank Jorge Vitorio Pereira for pointing out the counterexamples to finite generation and for suggesting the analogy with GIT.
I also thank Richard Thomas for introducing me to GIT.
I thank K\c{e}stutis \u{C}esnavi\u{c}ius for help in the argument of the last paragraph of Proposition \ref{prop:generic_stability}.
I thank Paolo Cascini, Jorge Vitorio Pereira and Liam Stigant for reading a draft and suggesting several improvements.
Finally, I thank Federico Barbacovi, Fabio Bernasconi, Se\'{a}n Keel, Wendalin Lutz, Mirko Mauri, Michael McQuillan, Jaime Mendizabal Roche, Nicholas Shepherd-Barron, Calum Spicer and Angelo Vistoli for taking the time to answer my questions.
I acknowledge funding from the EPSRC for my PhD bursary.

\section{Foliated Spaces}
\label{sec:foliations}

In this section, the basic objects of study are introduced.
\S \ref{subsec:foliations} defines distributions and foliations.
\S \ref{subsec:foliatied_spaces} defines directed and foliated spaces, a compact way to work with schemes endowed with a distribution or a foliation respectively.
\S \ref{subsec:invariant} introduces invariant morphisms.
\S \ref{subsec:algebraic_leaves} defines families of tangent subvarieties, algebraic leaves and their moduli problem.

\subsection{Foliations}
\label{subsec:foliations}

This subsection defines distributions, involutivity and foliations. It also defines the pullback of a distribution and shows it is well-defined.

\begin{definition}[Distributions]
\label{def:distributions}
Let $S$ be a scheme and let $X$ be a scheme over $S$.
A \emph{distribution} $\mathscr{F}$ on $X$ over $S$ is a quasi-coherent quotient sheaf of $\Omega_{X/S}^1$.
\end{definition}

\begin{remark}[Subsheaf distributions]
\label{rem:distribution_corr}
When $X$ is integral, a distribution is typically defined as a saturated subsheaf $\mathscr{T}_{\mathscr{F}}$ of the tangent sheaf $\mathscr{T}_{X/S}$. When $X$ is also locally Noetherian, there is a bijective correspondence
$$
\left\{
\begin{array}{c}
\mathscr{F} \;|\; \Omega_{X/S}^1 \rightarrow \mathscr{F} \rightarrow 0 \\
\text{and $\mathscr{F}$ is torsion-free}
\end{array}
\right\}
\begin{array}{c}
\xrightarrow{\quad\alpha\quad} \\
\xleftarrow[\quad\beta\quad]{}
\end{array}
\left\{
\begin{array}{c}
\mathscr{T}_{\mathscr{F}} \;|\; 0 \rightarrow \mathscr{T}_{\mathscr{F}} \rightarrow \mathscr{T}_{X/S} \\
\text{and $\mathscr{T}_{\mathscr{F}}$ is saturated}
\end{array}
\right\},
$$
where $\alpha$ and $\beta$ are defined as follows: for a quotient distribution $\mathscr{F}$,
$$\alpha(\mathscr{F}) = \sheafhom \left( \mathscr{F}, \mathscr{O}_X \right).$$
For a subsheaf distribution $\mathscr{T}_{\mathscr{F}}$, 
$$\beta(\mathscr{T}_{\mathscr{F}}) = \mathrm{im}\left(\Omega_{X/S}^1 \rightarrow \Omega_{X/S}^{[1]} \rightarrow \sheafhom \left( \mathscr{T}_{\mathscr{F}}, \mathscr{O}_X \right) \right).$$
To see that $\alpha$ and $\beta$ are inverse to each other, it is important to use the fact that, on an integral locally Noetherian scheme, the tangent sheaf, being the dual of a coherent sheaf, is reflexive
(\cite[\href{https://stacks.math.columbia.edu/tag/0AY4}{Lemma 0AY4}]{stacks-project}).
In Definition \ref{def:distributions}, a distribution is not required to be torsion-free since torsion cannot be defined on non-integral schemes and does not behave well under pullback.
\end{remark}

\begin{remark}[Coherence of distributions]
\label{rem:coherence_distributions}
Suppose that $X$ is locally of finite type over a locally Noetherian scheme $S$. Then $X$ is locally Noetherian (\cite[\href{https://stacks.math.columbia.edu/tag/01T6}{Lemma 01T6}]{stacks-project}) and the cotangent sheaf $\Omega_{X/S}^1$ is coherent (\cite[\href{https://stacks.math.columbia.edu/tag/01V2}{Lemma 01V2}]{stacks-project} and \cite[\href{https://stacks.math.columbia.edu/tag/01XZ}{Lemma 01XZ}]{stacks-project}).
This implies that any quasi-coherent quotient sheaf $\mathscr{F}$ of $\Omega_{X/S}^1$ is coherent (\cite[\href{https://stacks.math.columbia.edu/tag/01Y1}{Lemma 01Y1}]{stacks-project} and \cite[\href{https://stacks.math.columbia.edu/tag/01XZ}{Lemma 01XZ}]{stacks-project}). 
\end{remark}

\begin{definition}[Foliations]
\label{def:foliations}
Let $S$ be a scheme and let $X$ be a scheme over $S$. A distribution $\mathscr{F}$ on $X$ over $S$ is \emph{involutive} if there exists an open dense subset $U \subseteq X$ such that the total exterior derivative $d$, seen as an endomorphism of 
$$ \Omega_{X/S}^{\bullet}  = \bigoplus_{n > 0} \Omega_{X/S}^n, $$
descends to an endomorphism $ d_{\mathscr{F}}$ of
$$ \Lambda^{\bullet} \mathscr{F}|_U = \bigoplus_{n > 0} \Lambda^n \, \mathscr{F}|_U$$
over $U \subseteq X$.
A \emph{foliation} is an involutive distribution and $d_{\mathscr{F}}$ is the \emph{total exterior derivative} of the foliation.
\end{definition}

\begin{remark}[Subsheaf foliations]
\label{rem:foliation_corr}
When $X$ is integral and locally Noetherian, the one-form involutivity criterion can be used to show that, under the correspondence of Remark \ref{rem:distribution_corr}, a torsion-free quotient distribution is involutive if and only if the corresponding saturated subsheaf distribution is closed under the Lie bracket.
\end{remark}

\begin{remark}[Uniqueness of $d_{\mathscr{F}}$]
\label{rem:uniqueness_exterior_derivative}
Observe that, for all $n \in \mathbb{N}$, the morphism
$$ \Omega_{X/S}^n \rightarrow \Lambda^n \, \mathscr{F} $$
is surjective (\cite[\href{https://stacks.math.columbia.edu/tag/01CJ}{Lemma 01CJ}]{stacks-project}).
This implies that there is a surjection
$$ \Omega_{X/S}^{\bullet}  \rightarrow \Lambda^{\bullet} \mathscr{F}.$$
Hence, if $d_{\mathscr{F}}$ exists, it is unique.
In particular, since $d$ can be decomposed into morphisms
$$ d_n : \Omega_{X/S}^n \rightarrow \Omega_{X/S}^{n+1} $$
for $n \in \mathbb{N}$, whenever $d_{\mathscr{F}}$ exists, it must also decompose into morphisms
$$d_{\mathscr{F},n} : \Lambda^n \, \mathscr{F} \rightarrow \Lambda^{n+1} \, \mathscr{F}.$$
\end{remark}

\begin{definition}[Pullback of distributions]
\label{def:pullback}
Let $S$ be a scheme and let $X$ and $Y$ be schemes over $S$.
Let $f : X \rightarrow Y$ be an $S$-morphism and let $\mathscr{G}$ be a distribution on $Y$ over $S$.
Then the \emph{pullback distribution} $f^{\#} \mathscr{G}$ is the pushout of the diagram
$$ f^* \mathscr{G} \leftarrow f^* \Omega_{Y/S}^1 \rightarrow \Omega_{X/S}^1.$$
\end{definition}

\begin{lemma}[Pullback is well-defined]
\label{lem:pullback}
Let $S$ be a scheme and let $X$ and $Y$ be schemes over $S$.
Let $f : X \rightarrow Y$ be an $S$-morphism and let $\mathscr{G}$ be a distribution on $Y$ over $S$.
Then $f^{\#} \mathscr{G}$ is a distribution.
Furthermore, suppose that $\mathscr{G}$ is a foliation on $Y$ over $S$ and at least one of the following holds.
\begin{itemize}
\item $X$ is irreducible and $f$ is dominant. 
\item $f$ is an open morphism.
\end{itemize}
Then $f^{\#} \mathscr{G}$ is a foliation on $X$ over $S$.
\end{lemma}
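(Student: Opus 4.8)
The plan is to establish the two assertions separately. For the first, that $f^{\#}\mathscr{G}$ is a distribution, I would argue formally from Definition \ref{def:pullback}: as a pushout of $f^{*}\mathscr{G} \leftarrow f^{*}\Omega_{Y/S}^{1} \to \Omega_{X/S}^{1}$, the sheaf $f^{\#}\mathscr{G}$ is the cokernel of a morphism of quasi-coherent sheaves, hence quasi-coherent, and it comes with a canonical morphism $\Omega_{X/S}^{1} \to f^{\#}\mathscr{G}$. Applying $f^{*}$ to the surjection $\Omega_{Y/S}^{1} \to \mathscr{G}$ and using right exactness of $f^{*}$ shows $f^{*}\Omega_{Y/S}^{1} \to f^{*}\mathscr{G}$ is surjective; chasing the pushout then shows $\Omega_{X/S}^{1} \to f^{\#}\mathscr{G}$ is surjective. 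Thus $f^{\#}\mathscr{G}$ is a quasi-coherent quotient sheaf of $\Omega_{X/S}^{1}$, i.e. a distribution.

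For the foliation assertion, write $\mathscr{I}^{\bullet}_{Y} = \ker(\Omega_{Y/S}^{\bullet} \to \Lambda^{\bullet}\mathscr{G})$ and $\mathscr{I}^{\bullet}_{X} = \ker(\Omega_{X/S}^{\bullet} \to \Lambda^{\bullet}f^{\#}\mathscr{G})$ for the respective graded ideals of the exterior algebras; each is generated in degree one, so $\mathscr{I}^{n}_{X} = \mathscr{I}^{1}_{X} \wedge \Omega_{X/S}^{n-1}$ and similarly for $Y$ (\cite[\href{https://stacks.math.columbia.edu/tag/01CJ}{Lemma 01CJ}]{stacks-project}). By Definition \ref{def:foliations}, $\mathscr{G}$ being a foliation gives an open dense $V \subseteq Y$ over which $d$ descends to $\Lambda^{\bullet}\mathscr{G}|_{V}$; equivalently, $\mathscr{I}^{\bullet}_{Y}$ is $d$-stable over $V$. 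I would set $U = f^{-1}(V)$ and first check $U$ is dense in $X$ (the case $X = \varnothing$ being trivial). If $f$ is open, this is immediate: the image of a nonempty open subset of $X$ is a nonempty open subset of $Y$, hence meets the dense set $V$. If $X$ is irreducible and $f$ is dominant, it suffices that $U \neq \varnothing$, which holds because the dense subset $f(X)$ meets the nonempty open $V$.

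The core of the proof is a local description of $\mathscr{I}^{1}_{X}$. Unwinding the pushout, the kernel of $\Omega_{X/S}^{1} \to f^{\#}\mathscr{G}$ is the image in $\Omega_{X/S}^{1}$ of $\ker(f^{*}\Omega_{Y/S}^{1} \to f^{*}\mathscr{G}) = \mathrm{im}(f^{*}\mathscr{I}^{1}_{Y} \to f^{*}\Omega_{Y/S}^{1})$ under the canonical map $f^{*}\Omega_{Y/S}^{1} \to \Omega_{X/S}^{1}$; hence, locally on $U$, the sheaf $\mathscr{I}^{1}_{X}$ is generated by pullbacks $f^{*}\omega$ of local sections $\omega$ of $\mathscr{I}^{1}_{Y} = \ker(\Omega_{Y/S}^{1} \to \mathscr{G})$. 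Since the exterior derivative commutes with pullback, $d(f^{*}\omega) = f^{*}(d\omega)$; and over $V$ involutivity gives $d\omega \in \mathscr{I}^{2}_{Y} = \mathscr{I}^{1}_{Y} \wedge \Omega_{Y/S}^{1}$, so writing $d\omega$ locally as a finite sum $\sum_{j}\omega_{j} \wedge \eta_{j}$ with $\omega_{j} \in \mathscr{I}^{1}_{Y}$ and $\eta_{j} \in \Omega_{Y/S}^{1}$ yields $d(f^{*}\omega) = \sum_{j} f^{*}\omega_{j} \wedge f^{*}\eta_{j} \in \mathscr{I}^{1}_{X} \wedge \Omega_{X/S}^{1} = \mathscr{I}^{2}_{X}$. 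Thus $d(\mathscr{I}^{1}_{X}) \subseteq \mathscr{I}^{2}_{X}$ over $U$, and a Leibniz-rule bookkeeping (for $\omega \in \mathscr{I}^{1}_{X}$ and any $\eta$, $d(\omega \wedge \eta) = d\omega \wedge \eta - \omega \wedge d\eta$, with both terms in the ideal $\mathscr{I}^{\bullet}_{X}$) upgrades this to $d(\mathscr{I}^{\bullet}_{X}) \subseteq \mathscr{I}^{\bullet}_{X}$ over $U$. Hence $d$ descends to $\Lambda^{\bullet}f^{\#}\mathscr{G}|_{U}$, necessarily uniquely by Remark \ref{rem:uniqueness_exterior_derivative}, and since $U$ is open dense, $f^{\#}\mathscr{G}$ is involutive.

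The routine parts are the diagram chasing and the compatibility of $d$ with pullback; the place where the hypotheses genuinely enter is the density of $f^{-1}(V)$, and the step I expect to require the most care is the identification of $\ker(\Omega_{X/S}^{1} \to f^{\#}\mathscr{G})$ as locally generated by pullbacks of sections of $\ker(\Omega_{Y/S}^{1} \to \mathscr{G})$ — in particular making this robust in the possibly non-Noetherian setting, where one should allow infinite local generating sets but may still write any individual local section as a finite combination.
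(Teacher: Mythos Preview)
Your proof is correct, and the first part (that $f^{\#}\mathscr{G}$ is a distribution) and the density of $U=f^{-1}(V)$ match the paper essentially verbatim. The involutivity argument, however, follows a genuinely different route. You work with the graded differential ideal $\mathscr{I}^{\bullet}_{X}=\ker(\Omega_{X/S}^{\bullet}\to\Lambda^{\bullet}f^{\#}\mathscr{G})$, identify $\mathscr{I}^{1}_{X}$ as the image of $f^{*}\mathscr{I}^{1}_{Y}$, and then check $d$-stability of the ideal by hand via naturality of $d$ under pullback and the Leibniz rule. The paper instead avoids local sections altogether: it observes that $d_{f^{\#}\mathscr{G},0}$ is automatic, builds $d_{f^{\#}\mathscr{G},1}:f^{\#}\mathscr{G}\to\Lambda^{2}f^{\#}\mathscr{G}$ directly from the universal property of the pushout (in abelian sheaves, using that the inclusion from quasi-coherent sheaves is exact), and then invokes Bourbaki's criterion \cite[III, \S10.9, Prop.~14]{MR0274237} that compatible $d_{0}$ and $d_{1}$ suffice to extend to the full exterior algebra. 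Your approach is the classical ``closed differential ideal'' picture and is arguably more transparent geometrically; the paper's is cleaner bookkeeping-wise, since the pushout diagram and the Bourbaki reference absorb exactly the Leibniz computations you spell out. One small point: when you pass from ``$d(f^{*}\omega)\in\mathscr{I}^{2}_{X}$ for generators $f^{*}\omega$'' to ``$d(\mathscr{I}^{1}_{X})\subseteq\mathscr{I}^{2}_{X}$'', you also need $d(b\cdot f^{*}\omega)=db\wedge f^{*}\omega+b\,d(f^{*}\omega)\in\mathscr{I}^{2}_{X}$ for $b\in\mathscr{O}_{X}$; this is the same Leibniz step you invoke for higher degrees, so just fold it in explicitly.
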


\begin{proof}
Since $f^*$ is right exact, $f^* \Omega_{Y/S}^1 \rightarrow f^* \mathscr{G}$ is surjective. This implies that the pushout $\Omega_{X/S}^1 \rightarrow f^{\#} \mathscr{G}$ is surjective.
Furthermore, because the category of quasi-coherent sheaves is closed under colimits, $f^{\#} \mathscr{G}$ is quasi-coherent, hence it is a distribution. \par

It remains to show that, under the assumptions above, $f^{\#} \mathscr{G}$ is involutive whenever $\mathscr{G}$ is involutive.
Let $V \subseteq Y$ be a dense open subset of $Y$ such that the restriction of $\mathscr{G}$ to $V$ admits a total exterior derivative.
If either $X$ is irreducible and $f$ is dominant or $f$ is open, the inverse image of $V$ is a dense open subset of $X$. Up to replacing $Y$ by $V$, it may be assumed that $\mathscr{G}$ admits a total exterior derivative.
Firstly, note that $d_{f^{\#} \mathscr{G},0}$ always exists and it is given as the composition of $d_0$ with the quotient morphism $\Omega_{X/S}^1 \rightarrow f^{\#} \mathscr{G}$.
Next, it is shown that
$$ d_{f^{\#} \mathscr{G},1} : f^{\#} \mathscr{G} \rightarrow \Lambda^2 f^{\#} \mathscr{G} $$
exists. To this end, consider the diagram 
\begin{center}
\begin{tikzcd}[row sep=scriptsize, column sep=scriptsize]
& f^* \Omega_{Y/S}^2 \arrow[rr] \arrow[dd] & & \Omega_{X/S}^2 \arrow[dd] \\
f^* \Omega_{Y/S}^1 \arrow[ur, "d_1"] \arrow[rr, crossing over] \arrow[dd] & & \Omega_{X/S}^1 \arrow[ur, "d_1"] \\
& f^*\Lambda^2\, \mathscr{G} \arrow[rr] & & \Lambda^2\, f^{\#} \mathscr{G} \\
f^*\mathscr{G} \arrow[ur, "d_{\mathscr{G},1}"] \arrow[rr] & & f^{\#} \mathscr{G} \arrow[from=uu, crossing over],
\end{tikzcd}
\end{center}

where the commuting square in the background is obtained by applying the second exterior power to the commuting square in the foreground, and noting that exterior powers commute with pullbacks.
Recall that $f^{\#} \mathscr{G}$ is the pushout of the diagram
$$ f^* \mathscr{G} \leftarrow f^* \Omega_{Y/S}^1 \rightarrow \Omega_{X/S}^1$$
in the category of quasi-coherent $\mathscr{O}_X$-modules.
The inclusion functor from the category of quasi-coherent $\mathscr{O}_X$-modules to the category of abelian sheaves over $X$ is exact (\cite[\href{https://stacks.math.columbia.edu/tag/01BY}{Lemma 01BY}]{stacks-project}).
Hence $f^{\#} \mathscr{G}$ is the pushout in the category of abelian sheaves as well.
Existence of $d_{f^{\#}\mathscr{G},1}$ then follows from the universal property of the pushout.
Finally $d_{f^{\#}\mathscr{G}}$ is constructed.
By Remark \ref{rem:uniqueness_exterior_derivative}, if $d_{f^{\#}\mathscr{G}}$ exists, it is unique.
This implies that it is enough to construct it locally.
Let $X = \mathrm{Spec}\,B$ and let $F = \Gamma \left( X, \mathscr{F} \right)$.
Constructing an exterior derivative $d_F$ on the exterior algebra $\Lambda^{\bullet} F$ is equivalent to constructing compatible $d_{F,0} : B \rightarrow F$ and $d_{F,1} : F \rightarrow \Lambda^2 F$ (\cite[Chapter III, \S 10.9, Proposition 14]{MR0274237}).
Thus involutivity of $f^{\#} \mathscr{G}$ is proven.
\end{proof}

\begin{lemma}[Composition of pullbacks]
\label{lem:pullback_functorial}
Let $S$ be a scheme and let $X$, $Y$ and $Z$ be schemes over $S$.  Let $f : X \rightarrow Y$ and $g : Y \rightarrow Z$ be $S$-morphisms and let $\mathscr{H}$ be a distribution on $Z$ over $S$.
Then
$$ (g f)^{\#} \mathscr{H} = f^{\#} ( g^{\#} \mathscr{H} ) .$$
\end{lemma}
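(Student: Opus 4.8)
The plan is to exhibit both sides as pushouts of one and the same diagram of quasi-coherent $\mathscr{O}_X$-modules and then appeal to the uniqueness of pushouts. The key formal input is that the pullback functor $f^*$ commutes with colimits --- being given by $f^{-1}(-) \otimes_{f^{-1}\mathscr{O}_Y} \mathscr{O}_X$, it is a left adjoint --- so in particular it preserves pushouts. Applying $f^*$ to the square defining $g^{\#}\mathscr{H}$ thus yields a pushout square
\begin{center}
\begin{tikzcd}[row sep=small, column sep=small]
f^* g^* \Omega_{Z/S}^1 \arrow[r] \arrow[d] & f^* \Omega_{Y/S}^1 \arrow[d] \\
f^* g^* \mathscr{H} \arrow[r] & f^*(g^{\#}\mathscr{H}).
\end{tikzcd}
\end{center}

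Next I would place this square on top of the square defining $f^{\#}(g^{\#}\mathscr{H})$, namely the pushout of $f^*(g^{\#}\mathscr{H}) \leftarrow f^* \Omega_{Y/S}^1 \rightarrow \Omega_{X/S}^1$, and invoke the pasting lemma for pushouts: since both inner squares are pushouts, so is the outer rectangle. Concretely, this shows that
\begin{center}
\begin{tikzcd}[row sep=small, column sep=small]
f^* g^* \Omega_{Z/S}^1 \arrow[r] \arrow[d] & \Omega_{X/S}^1 \arrow[d] \\
f^* g^* \mathscr{H} \arrow[r] & f^{\#}(g^{\#}\mathscr{H})
\end{tikzcd}
\end{center}
is a pushout, where the upper horizontal map is the composite $f^* g^* \Omega_{Z/S}^1 \rightarrow f^* \Omega_{Y/S}^1 \rightarrow \Omega_{X/S}^1$.

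It then remains to recognise this outer rectangle as the diagram defining $(gf)^{\#}\mathscr{H}$. For this I would use the canonical identification $(gf)^* \cong f^* g^*$ of pullback functors, under which $(gf)^*\mathscr{H}$ corresponds to $f^* g^* \mathscr{H}$, together with the functoriality of the relative cotangent sequence (\cite[\href{https://stacks.math.columbia.edu/tag/02K4}{Lemma 02K4}]{stacks-project}), which says precisely that the canonical morphism $(gf)^*\Omega_{Z/S}^1 \rightarrow \Omega_{X/S}^1$ is the composite $(gf)^*\Omega_{Z/S}^1 = f^* g^* \Omega_{Z/S}^1 \rightarrow f^* \Omega_{Y/S}^1 \rightarrow \Omega_{X/S}^1$. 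Hence $(gf)^{\#}\mathscr{H}$, which by Definition \ref{def:pullback} is the pushout of $(gf)^* \mathscr{H} \leftarrow (gf)^* \Omega_{Z/S}^1 \rightarrow \Omega_{X/S}^1$, is the pushout of exactly the outer rectangle above, and uniqueness of pushouts gives the asserted equality.

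The only real work is the bookkeeping in the last step: checking that the comparison isomorphism $(gf)^* \cong f^* g^*$ matches up both the cotangent-sequence maps and the structural surjections onto $\mathscr{H}$, so that the two pushout diagrams literally coincide. This is routine diagram chasing relying on the cited functoriality; the remainder of the argument is formal manipulation of colimits.
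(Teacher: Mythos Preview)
Your argument is correct and is precisely a detailed unpacking of the paper's one-line proof, which reads: ``This is an easy consequence of the fact that $f^*$ preserves colimits and colimits commute with colimits.'' Your use of $f^*$ preserving pushouts is the first clause, and the pasting lemma for pushouts is the instance of ``colimits commute with colimits'' being invoked; the bookkeeping with $(gf)^* \cong f^* g^*$ and the functoriality of the cotangent maps is exactly what is implicit in the paper's assertion.
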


\begin{proof}
This is an easy consequence of the fact that $f^*$ preserves colimits and colimits commute with colimits.
\end{proof}

\begin{lemma}[Pullback by weakly \'{e}tale morphisms]
\label{lem:etale_pullback}
Let $S$ be a scheme and let $X$ and $Y$ be schemes over $S$.
Let $f : X \rightarrow Y$ be a weakly \'{e}tale $S$-morphism and let $\mathscr{G}$ be a distribution on $Y$ over $S$.
Then
$$ f^{\#} \mathscr{G} = f^* \mathscr{G}.$$
Furthermore, if $\mathscr{G}$ is a foliation and $f$ is open, $f^{\#} \mathscr{G}$ is a foliation.
\end{lemma}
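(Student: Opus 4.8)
The plan is to reduce the statement to the single fact that a weakly étale morphism $f\colon X\to Y$ induces an isomorphism on relative cotangent sheaves, $f^{*}\Omega_{Y/S}^1\xrightarrow{\ \sim\ }\Omega_{X/S}^1$. To see this, recall that a weakly étale morphism is formally étale, hence formally unramified and formally smooth (\cite{stacks-project}). Formal unramifiedness gives $\Omega_{X/Y}^1=0$, so the right-exact cotangent sequence
$$ f^{*}\Omega_{Y/S}^1 \longrightarrow \Omega_{X/S}^1 \longrightarrow \Omega_{X/Y}^1 \longrightarrow 0 $$
shows that $f^{*}\Omega_{Y/S}^1\to\Omega_{X/S}^1$ is surjective, while formal smoothness of $f$ makes this sequence a (locally split) short exact sequence, so the map is injective as well, hence an isomorphism.

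Granting this, I would compute the pushout defining $f^{\#}\mathscr{G}$. By Definition \ref{def:pullback}, $f^{\#}\mathscr{G}$ is the pushout of $f^{*}\mathscr{G}\leftarrow f^{*}\Omega_{Y/S}^1\rightarrow\Omega_{X/S}^1$ in the category of quasi-coherent $\mathscr{O}_X$-modules; since the right-hand arrow is an isomorphism by the previous step, the pushout is canonically identified with $f^{*}\mathscr{G}$, and the structure surjection $\Omega_{X/S}^1\to f^{\#}\mathscr{G}$ is identified, via the cotangent isomorphism, with $f^{*}$ of the defining surjection of $\mathscr{G}$. Thus $f^{\#}\mathscr{G}=f^{*}\mathscr{G}$ as distributions on $X$.

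For the last assertion, if $\mathscr{G}$ is a foliation and $f$ is open, then $f^{\#}\mathscr{G}$ is a foliation directly by the second bullet of Lemma \ref{lem:pullback}, since a weakly étale morphism is in particular an $S$-morphism and is here assumed open. The only point calling for care is the very first one — locating the exact references for the facts that a weakly étale morphism is formally unramified (so the cotangent sequence is right exact with vanishing last term) and formally smooth (so it is also left exact); after that, everything is formal.
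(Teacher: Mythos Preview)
Your proposal is correct and follows essentially the same approach as the paper: reduce to the isomorphism $f^{*}\Omega_{Y/S}^{1}\xrightarrow{\ \sim\ }\Omega_{X/S}^{1}$, observe that the pushout defining $f^{\#}\mathscr{G}$ then collapses to $f^{*}\mathscr{G}$, and invoke Lemma~\ref{lem:pullback} for the foliation assertion. The paper simply cites the Stacks Project (tag \texttt{08R2}) directly for the cotangent isomorphism rather than deriving it; your route via ``weakly \'etale $\Rightarrow$ formally \'etale'' reaches the same conclusion, but be aware that while the formally-unramified half is immediate, the formally-smooth half is not a one-line fact---it ultimately rests on the vanishing of the cotangent complex $L_{X/Y}$ for weakly \'etale $f$, so the direct citation is the more economical justification.
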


\begin{proof}
$f^{\#} \mathscr{G}$ is the  pushout of the diagram
$$ f^* \mathscr{G} \leftarrow f^* \Omega_{Y/S}^1 \rightarrow \Omega_{X/S}^1.$$
Since $f$ is weakly \'{e}tale, $f^* \Omega_{Y/S}^1 \rightarrow \Omega_{X/S}^1$ is an isomorphism (\cite[\href{https://stacks.math.columbia.edu/tag/08R2}{Lemma 08R2}]{stacks-project}).
Therefore
$$ f^{\#} \mathscr{G} = f^* \mathscr{G}.$$
If $\mathscr{G}$ is a foliation and $f$ is open, by Lemma \ref{lem:pullback}, $f^{\#} \mathscr{G}$ is a foliation.
\end{proof}

\subsection{Foliated Spaces}
\label{subsec:foliatied_spaces}

This subsection defines a suitable category to study distributions. 
To this end, it is necessary to first discuss how to rule out some pathologies occurring in positive characteristic.

\begin{definition}[Residually separable morphisms]
\label{def:separable_morphism}
Let $f : X \rightarrow Y$ be a morphism of schemes.
Then $f$ is \emph{residually separable} if for all $x \in X$ the induced inclusion of residue fields
$$ \kappa \left(f(x) \right) \subseteq \kappa(x) $$
is separable.
It is \emph{universally separable} if for all morphisms $Y^{\prime} \rightarrow Y$, the base change $f^{\prime} : X \times_Y Y^{\prime} \rightarrow Y^{\prime}$ is residually separable.
\end{definition}

\begin{remark}[Characterisation of universal separability]
\label{rem:char_separability}
Suppose that $f : X \rightarrow Y$ is a morphism of schemes.
Using \cite[\href{https://stacks.math.columbia.edu/tag/01JT}{Lemma 01JT}]{stacks-project}, it follows that $f$ is universally separable if and only if, for all $x \in X$ and for all field extensions $\kappa(f(x)) \subseteq L$, the inclusion morphism $ L \rightarrow \kappa(x) \otimes_{\kappa(f(x))} L$ is residually separable.
\end{remark}

\begin{remark}[Separability in characteristic zero]
\label{rem:char_zero_separable}
If $f : X \rightarrow Y$ is a morphism of schemes defined over a over a field of characteristic zero, then it is universally separable.
\end{remark}

\begin{definition}[Foliated spaces]
\label{def:foliated_space}
Let $S$ be a scheme.
A \emph{directed space} over $S$ is a pair $(X, \mathscr{F})$ where $X$ is a scheme over $S$ and $\mathscr{F}$ is a distribution on $X$ over $S$. 
If $\mathscr{F}$ is a foliation, then  $(X, \mathscr{F})$ is a \emph{foliated space}.
Let $(X, \mathscr{F})$ and $(Y, \mathscr{G})$ be two directed spaces over $S$.
An \emph{$S$-morphism $f : (X, \mathscr{F}) \rightarrow (Y, \mathscr{G})$ of directed spaces} is an $S$-morphism of schemes which is universally separable and admits the existence of the dashed morphism in the diagram
\begin{center}
\begin{tikzcd}
f^* \Omega_{Y/S}^1 \arrow[d] \arrow[r] & \Omega_{X/S}^1 \arrow[d]    \\
f^* \mathscr{G} \arrow[r, dashed] & \mathscr{F}.
\end{tikzcd}
\end{center}
If the dashed morphism in the diagram is an isomorphism, then \emph{$f$ is a weakly \'{e}tale morphism of directed spaces}.
If $\mathscr{F}$ and $\mathscr{G}$ are foliations, then  $f$ is a \emph{morphism of foliated spaces}.
\end{definition}

\begin{remark}[Assumption of universal separability]
An $\mathscr{F}$-invariant morphism is required to be universally separable to avoid some phemonena occuring in positive characteristic.
Let $S = \mathrm{Spec} \, k$ where $k$ is a field of positive characteristic, let $X$ be a scheme over $S$ and let $\mathrm{Frob} : X \rightarrow X$ be the relative Frobenius.
Then for any distribution $\mathscr{F}$ on $X$ over $S$, there exists a factorisation
$$ \Omega_{X/S}^1 \xrightarrow{\sim} \Omega_{\mathrm{Frob}}^1 \rightarrow \mathscr{F}.$$
As a result, it is not true that a subvariety tangent to $\mathscr{F}$ (Definition \ref{def:algebraic_leaves}) is contained in the fibre of an invariant morphism (Proposition \ref{prop:decomposition}).
Whilst this may be seen as pathological beahaviour, it can be exploited to construct a correspondence between foliations and factorisations of the Frobenius morphism (\cite[Lecture III, Proposition 1.9]{MR1468476}).
\end{remark}

\begin{remark}[Terminology of Definition \ref{def:foliated_space}]
\label{rem:terminology}
The term \emph{directed space} can be found in the work of Jean-Pierre Demailly on the Green-Griffiths-Lang conjecture.
For instance, see \cite[\S 0]{MR3445519}.
The term \emph{weakly \'{e}tale morphism of directed spaces} comes from observing that if $f : X \rightarrow Y$ is a weakly \'{e}tale morphism of schemes over $S$, then, as remarked in Lemma \ref{lem:etale_pullback}, the natural morphism $f^* \Omega_{Y/S}^1 \rightarrow \Omega_{X/S}^1$ is an isomorphism.
\end{remark}

\begin{lemma}[Composition of morphisms of directed spaces]
\label{lem:composition_directed}
Let $S$ be a scheme and let $f : (X, \mathscr{F}) \rightarrow (Y, \mathscr{G})$ and $g : (Y, \mathscr{G}) \rightarrow (Z, \mathscr{H})$ be (\emph{respectively} weakly \'{e}tale) morphisms of directed spaces.
Then
$$ g \circ f : (X, \mathscr{F}) \rightarrow (Z, \mathscr{H})$$
is a (\emph{respectively} weakly \'{e}tale) morphism of directed spaces.
\end{lemma}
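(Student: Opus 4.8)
The plan is to verify the two defining conditions of Definition \ref{def:foliated_space} for $g \circ f$: that it is universally separable, and that the dashed arrow $(g\circ f)^*\mathscr{H} \to \mathscr{F}$ exists; and, for the parenthetical ``weakly \'etale'' clause, that this arrow is an isomorphism. Nothing extra is needed for foliated spaces, since a morphism of foliated spaces is by definition a morphism of directed spaces whose three distributions happen to be foliations.

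\emph{Universal separability.} First I would record that residual separability (Definition \ref{def:separable_morphism}) is stable under composition: if $f$ and $g$ are residually separable, then for $x \in X$ the residue fields $\kappa(g(f(x))) \subseteq \kappa(f(x)) \subseteq \kappa(x)$ form a tower of separable extensions, and a tower of separable field extensions is separable, so $\kappa(g(f(x))) \subseteq \kappa(x)$ is separable. Now fix any $Z' \to Z$ and put $Y' = Y \times_Z Z'$. Using the standard identification $X \times_Y Y' = X \times_Z Z'$, the base change $(g\circ f) \times_Z Z'$ factors as
\[ X \times_Z Z' \xrightarrow{\;f \times_Y Y'\;} Y' \xrightarrow{\;g \times_Z Z'\;} Z'. \]
Since $f$ and $g$ are universally separable, $f \times_Y Y'$ and $g \times_Z Z'$ are residually separable, hence so is their composite by the previous remark. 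As $Z' \to Z$ was arbitrary, $g \circ f$ is universally separable.

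\emph{The dashed morphism.} Write $\phi : f^*\mathscr{G} \to \mathscr{F}$ and $\psi : g^*\mathscr{H} \to \mathscr{G}$ for the dashed morphisms supplied by $f$ and $g$, and let $c : (g \circ f)^*\mathscr{H} \xrightarrow{\ \sim\ } f^*(g^*\mathscr{H})$ be the canonical isomorphism (the same identification, applied to $\Omega^1$ and its quotients, that underlies Lemma \ref{lem:pullback_functorial}). I claim that
\[ \phi \circ f^*(\psi) \circ c : (g\circ f)^*\mathscr{H} \longrightarrow \mathscr{F} \]
is the required dashed morphism, i.e. that it is compatible with the natural map $(g\circ f)^*\Omega^1_{Z/S} \to \Omega^1_{X/S} \to \mathscr{F}$. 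This is a diagram chase: the map $(g\circ f)^*\Omega^1_{Z/S} \to \Omega^1_{X/S}$ is, by functoriality of the cotangent pullback maps, the composite $(g\circ f)^*\Omega^1_{Z/S} \cong f^*g^*\Omega^1_{Z/S} \to f^*\Omega^1_{Y/S} \to \Omega^1_{X/S}$; applying the commuting square of $f$ to the last arrow, then $f^*$ of the commuting square of $g$ to the middle one, and finally naturality of $c$, rewrites $(g\circ f)^*\Omega^1_{Z/S} \to \Omega^1_{X/S} \to \mathscr{F}$ as $(g\circ f)^*\Omega^1_{Z/S} \to (g\circ f)^*\mathscr{H} \xrightarrow{\ \phi \circ f^*(\psi) \circ c\ } \mathscr{F}$, as wanted. (One may instead phrase this via the pushout description of the pullback distribution together with Lemma \ref{lem:pullback_functorial}, but it is the same computation unpacked.) In the weakly \'etale case $\phi$ and $\psi$ are isomorphisms; then $f^*(\psi)$ is an isomorphism and $c$ always is, so the displayed composite is an isomorphism, whence $g \circ f$ is weakly \'etale.

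\emph{Main difficulty.} There is no genuine obstacle: the statement is a formal consequence of the definitions. The only points requiring care are the bookkeeping in the separability step --- invoking stability of separable field extensions under towers, and the identification $X \times_Y (Y \times_Z Z') = X \times_Z Z'$ --- and, in the second step, pasting the naturality squares correctly (functoriality of the cotangent pullback maps and naturality of the canonical isomorphism $f^*g^* \cong (g\circ f)^*$).
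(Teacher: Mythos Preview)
Your proposal is correct and follows essentially the same approach as the paper: compose the two dashed squares to obtain the dashed arrow for $g\circ f$, and reduce universal separability to the fact that towers of separable field extensions are separable. The only cosmetic difference is that the paper actually proves the tower fact via the reducedness characterization of separability (\cite[\href{https://stacks.math.columbia.edu/tag/030W}{Lemma 030W}]{stacks-project}), whereas you cite it and are instead more explicit about the base-change factorisation $X\times_Z Z' \to Y\times_Z Z' \to Z'$ that reduces universal separability to composition of residually separable morphisms.
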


\begin{proof}
Consider the diagram
\begin{center}
\begin{tikzcd}
g^* f^* \Omega_{Z/S}^1 \arrow[d] \arrow[r] & f^* \Omega_{Y/S}^1 \arrow[d] \arrow[r] & \Omega_{X/S}^1 \arrow[d]    \\
g^* f^* \mathscr{H} \arrow[r, dashed] & f^* \mathscr{G} \arrow[r, dashed] & \mathscr{F}.
\end{tikzcd}
\end{center}
By assumption, the dashed morphisms exist (\emph{respectively} are isomorphisms).
As a result the composition of the dashed morphisms exists (\emph{respectively} is an isomorphism).
To conclude the proof, it is enough to show that the composition of separable field extensions is separable.
Let $\kappa(z) \rightarrow \kappa(y) \rightarrow \kappa(x)$ be separable field extensions and let $\kappa(z) \rightarrow K$ be a field extension.
Using the characterisation in \cite[\href{https://stacks.math.columbia.edu/tag/030W}{Lemma 030W}]{stacks-project}, it is enough to show that $\kappa(x) \otimes_{\kappa(z)} K$ is reduced.
But $\kappa(x) \otimes_{\kappa(z)} K = \kappa(x) \otimes_{\kappa(y)} \left( \kappa(y) \otimes_{\kappa(z)} K \right)$.
By assumption $\kappa(y) \otimes_{\kappa(z)} K$ is a reduced $\kappa(y)$-algebra and the claim follows from \cite[\href{https://stacks.math.columbia.edu/tag/034N}{Lemma 034N}]{stacks-project}.
\end{proof}

\begin{lemma}[Directed spaces and base change]
\label{lem:base_change_foliation}
Let $S$ be a scheme and let $f : (X, \mathscr{F}) \rightarrow (Y, \mathscr{G})$ be a (\emph{respectively} weakly \'{e}tale) morphism of directed spaces over $S$.
Let $h : T \rightarrow S$ be a morphism and define base change morphisms
\begin{center}
\begin{tikzcd}
X^{\prime} \arrow[d, "f^{\prime}"] \arrow[r, "g^{\prime}"] & X \arrow[d, "f"]    \\
Y^{\prime} \arrow[d] \arrow[r, "g"] & Y \arrow[d]    \\
T \arrow[r, "h"] & S.
\end{tikzcd}
\end{center}
Then 
$$f^{\prime} : \left( X^{\prime},  {g^{\prime}}^{*} \mathscr{F} \right) \rightarrow \left( Y^{\prime}, g^{*} \mathscr{G}\right) $$
is a (\emph{respectively} weakly \'{e}tale) morphism of directed spaces.
\end{lemma}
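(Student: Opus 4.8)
The plan is to reduce the statement to a handful of formal facts: that the pullback of quasi-coherent sheaves is right exact, commutes with colimits and preserves isomorphisms; that the module of relative differentials commutes with base change; and that universal separability is stable under base change. First I would record the shape of the square: by construction $Y^{\prime} = Y \times_S T$ and $X^{\prime} = X \times_Y Y^{\prime} = X \times_S T$, with $g : Y^{\prime} \rightarrow Y$ and $g^{\prime} : X^{\prime} \rightarrow X$ the projections, so the canonical maps $g^{*} \Omega_{Y/S}^1 \rightarrow \Omega_{Y^{\prime}/T}^1$ and ${g^{\prime}}^{*} \Omega_{X/S}^1 \rightarrow \Omega_{X^{\prime}/T}^1$ are isomorphisms (\cite[\href{https://stacks.math.columbia.edu/tag/01V5}{Lemma 01V5}]{stacks-project}). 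Applying the right exact functor $g^{*}$ (respectively ${g^{\prime}}^{*}$) to the defining surjection $\Omega_{Y/S}^1 \rightarrow \mathscr{G}$ (respectively $\Omega_{X/S}^1 \rightarrow \mathscr{F}$) and composing with these isomorphisms exhibits $g^{*} \mathscr{G}$ and ${g^{\prime}}^{*} \mathscr{F}$ as quasi-coherent quotients of $\Omega_{Y^{\prime}/T}^1$ and $\Omega_{X^{\prime}/T}^1$, so they are distributions over $T$; equivalently $g^{\#} \mathscr{G} = g^{*} \mathscr{G}$ and ${g^{\prime}}^{\#} \mathscr{F} = {g^{\prime}}^{*} \mathscr{F}$ in the sense of Definition \ref{def:pullback}. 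For universal separability, $f^{\prime}$ is the base change of $f$ along $g$, and for any morphism $Y^{\prime\prime} \rightarrow Y^{\prime}$ the base change $X^{\prime} \times_{Y^{\prime}} Y^{\prime\prime} \rightarrow Y^{\prime\prime}$ agrees with the base change of $f$ along $Y^{\prime\prime} \rightarrow Y^{\prime} \rightarrow Y$, which is residually separable because $f$ is universally separable; hence $f^{\prime}$ is universally separable. Note that, since the objects here are directed spaces, no involutivity has to be checked.

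To produce the dashed morphism, I would use the identity $g \circ f^{\prime} = f \circ g^{\prime}$ (commutativity of the top square) together with the functoriality of pullback (cf. Lemma \ref{lem:pullback_functorial}) to obtain canonical identifications
$$ {f^{\prime}}^{*} \Omega_{Y^{\prime}/T}^1 \cong {g^{\prime}}^{*} f^{*} \Omega_{Y/S}^1, \qquad {f^{\prime}}^{*} (g^{*} \mathscr{G}) \cong {g^{\prime}}^{*} (f^{*} \mathscr{G}), \qquad \Omega_{X^{\prime}/T}^1 \cong {g^{\prime}}^{*} \Omega_{X/S}^1, $$
under which the canonical map ${f^{\prime}}^{*} \Omega_{Y^{\prime}/T}^1 \rightarrow \Omega_{X^{\prime}/T}^1$ becomes ${g^{\prime}}^{*}$ applied to the canonical map $f^{*} \Omega_{Y/S}^1 \rightarrow \Omega_{X/S}^1$. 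Thus the commuting square exhibiting $f^{\prime}$ as a morphism of directed spaces is obtained by applying ${g^{\prime}}^{*}$ to the square exhibiting $f$ as one, and the required dashed morphism ${f^{\prime}}^{*} (g^{*} \mathscr{G}) \rightarrow {g^{\prime}}^{*} \mathscr{F}$ is simply ${g^{\prime}}^{*}$ of the dashed morphism $f^{*} \mathscr{G} \rightarrow \mathscr{F}$ provided by hypothesis; the square commutes because ${g^{\prime}}^{*}$ is a functor. If moreover $f$ is weakly \'{e}tale, the dashed morphism $f^{*} \mathscr{G} \rightarrow \mathscr{F}$ is an isomorphism, and since ${g^{\prime}}^{*}$ carries isomorphisms to isomorphisms, so is its pullback; hence $f^{\prime}$ is a weakly \'{e}tale morphism of directed spaces.

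The step I expect to require the most care is the compatibility of the canonical morphism $f^{*} \Omega_{Y/S}^1 \rightarrow \Omega_{X/S}^1$ with base change, namely that after the identifications above it is exactly ${g^{\prime}}^{*}$ of the original morphism; this is what makes the two commuting squares coincide. Everything else is formal, and in particular no separate argument is needed for the weakly \'{e}tale case beyond the observation that ${g^{\prime}}^{*}$ preserves isomorphisms.
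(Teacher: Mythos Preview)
Your proposal is correct and follows essentially the same approach as the paper: apply ${g^{\prime}}^{*}$ to the commutative square witnessing $f$ as a morphism of directed spaces, use the base change isomorphisms for relative differentials (the paper cites \cite[\href{https://stacks.math.columbia.edu/tag/01V0}{Lemma 01V0}]{stacks-project}) together with ${g^{\prime}}^{*} f^{*} \cong {f^{\prime}}^{*} g^{*}$ to identify the resulting objects, and observe that ${g^{\prime}}^{*}$ preserves isomorphisms for the weakly \'{e}tale case. Your write-up is in fact slightly more explicit than the paper's, since you verify that $g^{*}\mathscr{G}$ and ${g^{\prime}}^{*}\mathscr{F}$ are distributions over $T$ and spell out the universal separability argument; the one delicate point you flag (compatibility of the canonical map $f^{*}\Omega_{Y/S}^1 \to \Omega_{X/S}^1$ with base change) is exactly what underlies the identification and is taken for granted in the paper as well.
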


\begin{proof}
Clearly, since $f$ is universally separable, $f^{\prime}$ is universally separable.
By assumption, there is a commutative diagram
\begin{center}
\begin{tikzcd}
f^* \Omega_{Y/S}^1 \arrow[d] \arrow[r] & \Omega_{X/S}^1 \arrow[d]    \\
f^* \mathscr{G} \arrow[r] & \mathscr{F}.
\end{tikzcd}
\end{center}
Applying ${g^{\prime}}^{*}$ and noting that 
\begin{align*}
{g^{\prime}}^{*} \left( f^* \Omega_{Y/S}^1 \right) &= {f^{\prime}}^{*} \left( g^* \Omega_{Y/S}^1 \right) =
{f^{\prime}}^{*} \Omega_{Y^{\prime}/T}^1 &
\text{(\cite[\href{https://stacks.math.columbia.edu/tag/01V0}{Lemma 01V0}]{stacks-project})} \\
{g^{\prime}}^{*} \Omega_{X/S}^1 &=
\Omega_{X^{\prime}/T}^1 &
\text{(\cite[\href{https://stacks.math.columbia.edu/tag/01V0}{Lemma 01V0}]{stacks-project})} \\
{g^{\prime}}^{*} \left( f^* \mathscr{G} \right) &=
{f^{\prime}}^* \left( g^{*} \mathscr{G} \right) &
\end{align*}
yields a commutative diagram
\begin{center}
\begin{tikzcd}
{f^{\prime}}^* \Omega_{Y^{\prime}/T}^1 \arrow[d] \arrow[r] & \Omega_{X^{\prime}/T}^1 \arrow[d]    \\
{f^{\prime}}^* \left( g^{*} \mathscr{G} \right) \arrow[r] & {g^{\prime}}^{*} \mathscr{F}.
\end{tikzcd}
\end{center}
This shows that $f^{\prime}$ is a morphism of directed spaces.
If $f^* \mathscr{G} \rightarrow \mathscr{F}$ is an isomorphism then ${f^{\prime}}^* \left( g^{*} \mathscr{G} \right) \rightarrow {g^{\prime}}^{*} \mathscr{F}$ is also an isomorphism.
\end{proof}

\begin{lemma}[Being a morphism of directed spaces is a stalk-local property]
\label{lem:directed_stalk_local}
Let $S$ be a scheme and let $(X, \mathscr{F})$ and $(Y, \mathscr{G})$ be directed spaces over $S$.
Suppose that $f : X \rightarrow Y$ is an $S$-morphism.
Then $f$ is a (\emph{respectively} weakly \'{e}tale) morphism of directed spaces if and only if, for all $x \in X$, $\kappa(f(x)) \subseteq \kappa(x)$ is universally separable and the dashed morphism in the diagram
\begin{center}
\begin{tikzcd}
\left( f^* \Omega_{Y/S}^1 \right)_x \arrow[d] \arrow[r] & \left(  \Omega_{X/S}^1 \right)_x \arrow[d]    \\
\left( f^* \mathscr{G} \right)_x \arrow[r, dashed] & \mathscr{F}_x.
\end{tikzcd}
\end{center}
exists (\emph{respectively} exists and is an isomorphism).
\end{lemma}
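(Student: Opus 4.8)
The plan is to split the definition of a morphism of directed spaces (Definition~\ref{def:foliated_space}) into its two constituent requirements — that $f$ be universally separable, and that the dashed arrow $f^*\mathscr{G}\to\mathscr{F}$ in the relevant square exist — and to verify that each is equivalent to its pointwise counterpart, dealing with the weakly \'etale refinement at the end.

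First I would dispose of separability. The point is that the hypothesis ``$\kappa(f(x))\subseteq\kappa(x)$ is universally separable'' is by definition the statement that the morphism $\mathrm{Spec}\,\kappa(x)\to\mathrm{Spec}\,\kappa(f(x))$ is universally separable, and unwinding it through Remark~\ref{rem:char_separability} gives precisely: for every field extension $\kappa(f(x))\subseteq L$, the morphism $L\to\kappa(x)\otimes_{\kappa(f(x))}L$ is residually separable. Applying Remark~\ref{rem:char_separability} directly to $f$ characterises universal separability of $f$ by exactly this same condition, now quantified over all $x\in X$. Hence $f$ is universally separable if and only if $\kappa(f(x))\subseteq\kappa(x)$ is universally separable for every $x\in X$.

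Next I would treat the dashed arrow. Both $f^*\Omega_{Y/S}^1\to f^*\mathscr{G}$ (by right exactness of $f^*$) and $\Omega_{X/S}^1\to\mathscr{F}$ are surjections of quasi-coherent $\mathscr{O}_X$-modules, so the dashed arrow $f^*\mathscr{G}\to\mathscr{F}$ exists if and only if the composite $f^*\Omega_{Y/S}^1\to\Omega_{X/S}^1\to\mathscr{F}$ annihilates $\ker\!\left(f^*\Omega_{Y/S}^1\to f^*\mathscr{G}\right)$, that is, if and only if the inclusion of quasi-coherent subsheaves
$$\ker\!\left(f^*\Omega_{Y/S}^1\to f^*\mathscr{G}\right)\ \subseteq\ \ker\!\left(f^*\Omega_{Y/S}^1\to\mathscr{F}\right)$$
holds inside $f^*\Omega_{Y/S}^1$. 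Since taking stalks is exact, the stalk at $x$ of each of these kernels is the kernel of the induced map on stalks, and an inclusion of subsheaves holds if and only if it holds on every stalk; using the standard identification $\left(f^*\mathscr{G}\right)_x=\mathscr{G}_{f(x)}\otimes_{\mathscr{O}_{Y,f(x)}}\mathscr{O}_{X,x}$, this is exactly the condition that the dashed arrow in the stalk diagram at $x$ exists, for every $x\in X$.

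Finally, for the weakly \'etale version: whenever the global dashed arrow exists, its stalk at $x$ is necessarily the factorisation of $\left(f^*\Omega_{Y/S}^1\right)_x\to\mathscr{F}_x$ through the surjection $\left(f^*\Omega_{Y/S}^1\right)_x\to\left(f^*\mathscr{G}\right)_x$, and this factorisation is unique; combined with the fact that a morphism of quasi-coherent sheaves is an isomorphism if and only if it is an isomorphism on every stalk, this yields the weakly \'etale statement. I do not expect a genuine obstacle here; the only step requiring care is the bookkeeping around separability, namely recognising ``$\kappa(f(x))\subseteq\kappa(x)$ universally separable'' as an instance of universal separability of a morphism of spectra of fields, so that Remark~\ref{rem:char_separability} applies verbatim on both sides of the equivalence.
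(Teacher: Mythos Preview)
Your proposal is correct and follows essentially the same approach as the paper: both reduce universal separability to the pointwise condition via Remark~\ref{rem:char_separability}, then argue that the existence of the dashed arrow is equivalent to the vanishing of the composite $\ker\!\left(f^*\Omega_{Y/S}^1\to f^*\mathscr{G}\right)\to\mathscr{F}$, which is checked on stalks using exactness of the stalk functor, and finally note that being an isomorphism is stalk-local. Your treatment of the separability step is in fact more explicit than the paper's, which dispatches it in a single sentence.
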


\begin{proof}
Using Remark \ref{rem:char_separability}, it follows that $f : X \rightarrow Y$ is universally separable \par

Let $\mathscr{K} = \mathrm{ker}\left( f^* \Omega_{Y/S}^1 \rightarrow f^* \mathscr{G} \right)$.
$f$ is a morphism of directed spaces if and only if the composition $\phi :\mathscr{K} \rightarrow \Omega_{X/S}^1 \rightarrow \mathscr{F}$ is zero.
This holds if and only if, for all $x \in X$, the composition
\begin{align}
\label{eq:local_composition}
\mathscr{K}_x \rightarrow \Omega_{X/S,x}^1 \rightarrow \mathscr{F}_x
\end{align}
is zero.
But
$$\mathscr{K}_x = \mathrm{ker}\left( \left( f^* \Omega_{Y/S}^1 \right)_x \rightarrow \left( f^* \mathscr{G} \right)_x \right),$$
hence (\ref{eq:local_composition}) holds for all $x \in X$ if and only if the morphism $\left( f^* \mathscr{G} \right)_x \rightarrow \mathscr{F}_x$ exists.
Finally, note that such a morphism is an isomorphism if and only if it is an isomorphism at all the stalks.
\end{proof}

\subsection{Invariant Morphisms}
\label{subsec:invariant}

This subsection defines morphisms invariant by a distribution and provides a criterion to check invariance through a complex of sheaves (Lemma \ref{lem:invariant_complex}).

\begin{definition}[Invariant morphisms]
\label{def:invariant_morphisms}
Let $S$ be a scheme and let $(X, \mathscr{F})$ be a directed space over $S$.
An $S$-morphism \emph{$f : X \rightarrow Y$ is invariant with respect to $\mathscr{F}$} (or \emph{$f : X \rightarrow Y$ is $\mathscr{F}$-invariant}) if 
$$ (X, \mathscr{F}) \rightarrow (Y, 0) $$
is a morphism of directed spaces over $S$.
Equivalently, if there exists a factorisation
$$ \Omega_{X/S}^1 \rightarrow \Omega_{X/Y}^1 \rightarrow \mathscr{F}.$$
\end{definition}

\begin{lemma}[Invariance and base change]
\label{lem:invariant_base_change}
Let $S$ be a scheme and let $(X, \mathscr{F})$ be a directed space over $S$.
Let $Y$ be an $S$-scheme and suppose that $f : X \rightarrow Y$ is an $\mathscr{F}$-invariant $S$-morphism.
Let $h : T \rightarrow S$ be a morphism and let $g^{\prime} : X^{\prime} \rightarrow X$ denote the base change of $h : T \rightarrow S$ by $X \rightarrow S$ and $f^{\prime} : X^{\prime} \rightarrow Y^{\prime}$ the base change of $f : X \rightarrow Y$ by $h$, 
Then $f^{\prime}$ is ${g^{\prime}}^{*} \mathscr{F}$-invariant.
\end{lemma}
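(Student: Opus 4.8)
The plan is to recognise this as the special case $\mathscr{G} = 0$ of Lemma \ref{lem:base_change_foliation}. By Definition \ref{def:invariant_morphisms}, the hypothesis that $f : X \to Y$ is $\mathscr{F}$-invariant says exactly that $f$ is a morphism of directed spaces $(X, \mathscr{F}) \to (Y, 0)$ over $S$; so there is nothing to do beyond transporting this structure through the base change and checking that the zero distribution stays zero.

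Concretely, I would set $Y' = Y \times_S T$, let $g : Y' \to Y$ be the projection, and form the base change square exactly as in the statement of Lemma \ref{lem:base_change_foliation}, so that the induced morphisms are the $g' : X' \to X$ and $f' : X' \to Y'$ of the present statement. Applying Lemma \ref{lem:base_change_foliation} to the morphism of directed spaces $f : (X, \mathscr{F}) \to (Y, 0)$ then gives that
$$ f' : \left( X', {g'}^{*} \mathscr{F} \right) \rightarrow \left( Y', g^{*} 0 \right) $$
is a morphism of directed spaces over $T$. Since the pullback functor $g^{*}$ is additive (indeed right exact), $g^{*} 0 = 0$, so this is a morphism of directed spaces $f' : (X', {g'}^{*}\mathscr{F}) \to (Y', 0)$, which by Definition \ref{def:invariant_morphisms} is precisely the statement that $f'$ is ${g'}^{*}\mathscr{F}$-invariant.

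The remaining points are routine bookkeeping and I do not expect any of them to be an obstacle. First, ${g'}^{*}\mathscr{F}$ is genuinely a distribution on $X'$ over $T$: because $g'$ is the base change of $X \to S$ along $h$, one has $\Omega_{X'/T}^1 = {g'}^{*}\Omega_{X/S}^1$ (\cite[\href{https://stacks.math.columbia.edu/tag/01V0}{Lemma 01V0}]{stacks-project}), and applying the right-exact functor ${g'}^{*}$ to the surjection $\Omega_{X/S}^1 \to \mathscr{F}$ yields a surjection $\Omega_{X'/T}^1 \to {g'}^{*}\mathscr{F}$. Second, universal separability of $f'$ is immediate from that of $f$, as already noted in the proof of Lemma \ref{lem:base_change_foliation}. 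All of the substance is therefore already contained in Lemma \ref{lem:base_change_foliation}.
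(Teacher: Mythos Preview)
Your proposal is correct and follows exactly the same approach as the paper: the paper's proof is simply ``Observe that $g^*0 = 0$ and apply Lemma \ref{lem:base_change_foliation} to conclude.'' Your version is more detailed (spelling out the base change square and the bookkeeping about ${g'}^*\mathscr{F}$ being a distribution), but the substance is identical.
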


\begin{proof}
Observe that $g^*0 = 0$ and apply Lemma \ref{lem:base_change_foliation} to conclude.
\end{proof}

\begin{lemma}[Characterisation of invariance]
\label{lem:invariant_complex}
Let $S$ be a scheme and let $(X, \mathscr{F})$ be a directed space over $S$.
Let $f : X \rightarrow Y$ be a universally separable $S$-morphism and consider the sequences of morphisms
\begin{align}
\label{eq:inv_target}
0 \rightarrow \mathscr{O}_Y \rightarrow f_*\mathscr{O}_X \rightarrow f_* \mathscr{F}, \\
\label{eq:inv_source}
0 \rightarrow f^{-1} \mathscr{O}_Y \rightarrow \mathscr{O}_X \rightarrow \mathscr{F}.
\end{align}
Then
\begin{enumerate}
\item $f$ is $\mathscr{F}$-invariant if and only if (\ref{eq:inv_target}) is a complex.
\item If (\ref{eq:inv_source}) is exact and $f$ is an open morphism with irreducible fibres, then (\ref{eq:inv_target}) is exact.
\end{enumerate}
\end{lemma}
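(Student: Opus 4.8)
The plan is to treat the two parts separately, using the adjunction $f^{-1} \dashv f_*$ to translate between the sequences (\ref{eq:inv_source}) and (\ref{eq:inv_target}), and a gluing argument on the fibres of $f$ to upgrade ``complex'' to ``exact'' in part (2). Throughout, write $q \colon \Omega_{X/S}^1 \rightarrow \mathscr{F}$ for the defining surjection, so that the map $\mathscr{O}_X \rightarrow \mathscr{F}$ of (\ref{eq:inv_source}) is $q \circ d$ and the map $f_*\mathscr{O}_X \rightarrow f_*\mathscr{F}$ of (\ref{eq:inv_target}) is its pushforward.

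For part (1): under the $f^{-1} \dashv f_*$ adjunction on abelian sheaves, the canonical map $\mathscr{O}_Y \rightarrow f_*\mathscr{O}_X$ corresponds to the structural map $f^{-1}\mathscr{O}_Y \rightarrow \mathscr{O}_X$, so the composite $\mathscr{O}_Y \rightarrow f_*\mathscr{O}_X \rightarrow f_*\mathscr{F}$ corresponds to $f^{-1}\mathscr{O}_Y \rightarrow \mathscr{O}_X \xrightarrow{q \circ d} \mathscr{F}$; since the adjunction bijection on $\mathrm{Hom}$-sets sends the zero map to the zero map, (\ref{eq:inv_target}) is a complex if and only if this last composite vanishes (and hence, symmetrically, if and only if (\ref{eq:inv_source}) is a complex). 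The composite $f^{-1}\mathscr{O}_Y \rightarrow \mathscr{O}_X \xrightarrow{d} \Omega_{X/S}^1$ has image the set of elements $d(f^{\#}h)$, $h$ a local section of $\mathscr{O}_Y$; the $\mathscr{O}_X$-submodule generated by this set is precisely $\mathrm{im}(f^*\Omega_{Y/S}^1 \rightarrow \Omega_{X/S}^1) = \ker(\Omega_{X/S}^1 \rightarrow \Omega_{X/Y}^1)$ by the exact sequence $f^*\Omega_{Y/S}^1 \rightarrow \Omega_{X/S}^1 \rightarrow \Omega_{X/Y}^1 \rightarrow 0$. Since $q$ is $\mathscr{O}_X$-linear, it kills every $d(f^{\#}h)$ if and only if it kills this kernel, i.e. if and only if it factors through $\Omega_{X/Y}^1$; as $f$ is assumed universally separable, by Definition \ref{def:invariant_morphisms} this last condition is exactly $\mathscr{F}$-invariance of $f$.

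For part (2): assume (\ref{eq:inv_source}) is exact and $f$ is open with irreducible fibres, so in particular every fibre is non-empty and $f$ is surjective. Applying the left-exact functor $f_*$ to the exact sequence $0 \rightarrow f^{-1}\mathscr{O}_Y \rightarrow \mathscr{O}_X \rightarrow \mathscr{F}$ produces an exact sequence $0 \rightarrow f_*f^{-1}\mathscr{O}_Y \rightarrow f_*\mathscr{O}_X \rightarrow f_*\mathscr{F}$, and the canonical map $\mathscr{O}_Y \rightarrow f_*\mathscr{O}_X$ factors as the adjunction unit $\eta \colon \mathscr{O}_Y \rightarrow f_*f^{-1}\mathscr{O}_Y$ followed by $f_*$ of the structural map $f^{-1}\mathscr{O}_Y \rightarrow \mathscr{O}_X$; thus it suffices to show that $\eta$ is an isomorphism, as substituting then yields (\ref{eq:inv_target}). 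Checking that $\eta$ is a bijection on sections over each open $V \subseteq Y$, and noting that restricting $f$ over $V$ preserves openness, surjectivity and connectedness of the fibres, reduces this to showing $\Gamma(Y, \mathscr{O}_Y) \rightarrow \Gamma(X, f^{-1}\mathscr{O}_Y)$ is bijective. Injectivity is immediate from surjectivity of $f$ (a section of $\mathscr{O}_Y$ with zero stalk at $f(x)$ for every $x$ is zero). For surjectivity, a section $s \in \Gamma(X, f^{-1}\mathscr{O}_Y)$ is locally induced by sections of $\mathscr{O}_Y$, and using openness of $f$ one gets an open cover $\{U_i\}$ of $X$, open sets $W_i = f(U_i)$ with $f|_{U_i} \colon U_i \rightarrow W_i$ surjective, and $t_i \in \mathscr{O}_Y(W_i)$ inducing $s|_{U_i}$; one then checks that $(t_i)_y = (t_j)_y$ for $y \in W_i \cap W_j$ by covering the fibre $f^{-1}(y)$ by the non-empty opens $U_k \cap f^{-1}(y)$ with $y \in W_k$, observing that the relevant stalks of the $t_k$ agree on overlaps (both equal the value of $s$ there), and using connectedness and non-emptiness of $f^{-1}(y)$ to force a single value. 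The $W_i$ cover $Y$ since $f$ is surjective, so the $t_i$ glue to $t \in \mathscr{O}_Y(Y)$ inducing $s$. The only step that is not purely formal is this last one, and the main obstacle is to carry out the fibrewise combinatorics carefully enough to pin down exactly where each of openness, surjectivity and connectedness of the fibres is used; everything else reduces to adjunction bookkeeping and left-exactness of $f_*$.
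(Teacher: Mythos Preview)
Your proof is correct. Part (1) is essentially the paper's argument: both reduce the question to whether the composite $f^{-1}\mathscr{O}_Y \rightarrow \mathscr{O}_X \rightarrow \mathscr{F}$ (equivalently $\mathscr{O}_Y \rightarrow f_*\mathscr{F}$) vanishes, and then identify this with the existence of a factorisation through $\Omega_{X/Y}^1$; the paper phrases the last step via the universal property of $\Omega_{Y/S}^1$ and the $(f^*,f_*)$-adjunction, while you use that the $d(f^{\#}h)$ generate $\mathrm{im}(f^*\Omega_{Y/S}^1 \rightarrow \Omega_{X/S}^1)$ as an $\mathscr{O}_X$-module, which is the same content.

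In Part (2) both arguments reduce to showing the unit $\eta : \mathscr{O}_Y \rightarrow f_*f^{-1}\mathscr{O}_Y$ is an isomorphism, but the tactics diverge. The paper proves the stronger statement that the presheaf $U \mapsto \mathscr{O}_Y(f(U))$ is already a sheaf, by establishing $f(U_1 \cap U_2) = f(U_1) \cap f(U_2)$; this step genuinely uses \emph{irreducibility} of the fibres (two non-empty opens of $X_y$ must meet), after which $\eta_V$ becomes the identity $\mathscr{O}_Y(V) \rightarrow \mathscr{O}_Y(f(f^{-1}(V))) = \mathscr{O}_Y(V)$. Your route bypasses this identification: you check bijectivity of $\eta$ on sections directly via a locally-constant-function argument on each fibre, which only needs the fibres to be \emph{connected} and non-empty. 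So your argument is in fact slightly sharper in its hypotheses, while the paper's gives a cleaner global description of $f^{-1}\mathscr{O}_Y$ that is reused elsewhere.
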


\begin{proof}
\textbf{(1 $\rightarrow$).}
Suppose that $f$ is $\mathscr{F}$-invariant.
Then there exists a factorisation
$$ \Omega_{X/S}^1 \rightarrow \Omega_{X/Y}^1 \rightarrow \mathscr{F}.$$
Now consider the composition
$$ \mathscr{O}_Y \rightarrow f_*\mathscr{O}_X \rightarrow f_* \mathscr{F}.$$
By assumption, there is a factorisation
$$ \mathscr{O}_Y \rightarrow f_*\mathscr{O}_X \rightarrow f_* \Omega_{X/S}^1 \rightarrow f_* \Omega_{X/Y}^1 \rightarrow f_* \mathscr{F},$$
where the composition $\mathscr{O}_Y \rightarrow f_* \Omega_{X/Y}^1$ is zero.

\textbf{(1 $\leftarrow$).}
By assumption, the composition
$$ \mathscr{O}_Y \rightarrow f_*\mathscr{O}_X \rightarrow f_* \Omega_{X/S}^1 \rightarrow f_* \mathscr{F}$$
is zero.
By the universal property of the sheaf of differentials, there exists a sequence of $\mathscr{O}_Y$-modules
$$\Omega_{Y/S}^1 \rightarrow f_* \Omega_{X/S}^1 \rightarrow f_* \mathscr{F}$$
whose composition is zero.
By adjointness of $f_*$ and $f^*$, there exists a sequence of $\mathscr{O}_X$-modules
$$f^* \Omega_{Y/S}^1 \rightarrow \Omega_{X/S}^1 \rightarrow \mathscr{F}$$
whose composition is zero.
Here, the morphism $f^* \Omega_{Y/S}^1 \rightarrow \Omega_{X/S}^1$ is the natural one (\cite[\href{https://stacks.math.columbia.edu/tag/01UV}{Lemma 01UV}]{stacks-project}).
By the universal property of the cokernel, $f$ is $\mathscr{F}$-invariant. \par

\textbf{(2).}
Pushing forward the exact sequence (\ref{eq:inv_source}) yields an exact sequence
$$ 0 \rightarrow f_*f^{-1} \mathscr{O}_Y \rightarrow f_*\mathscr{O}_X \rightarrow f_*\mathscr{F}.$$
Hence, it is enough to show that the adjunction morphism $\mathscr{O}_Y \rightarrow f_* f^{-1} \mathscr{O}_Y$ is an isomorphism.
To this end, it is first shown that
\begin{align}
\label{eq:inverse_sheaf_description}
f^{-1} \mathscr{O}_Y(U) = \mathscr{O}_Y\left( f(U) \right)
\end{align}
for all $U \subseteq X$ open subsets.
Because $f$ is open, $f^{-1} \mathscr{O}_Y$ is the sheaf associated to the presheaf $f^{-1} \mathscr{O}_Y(U) = \mathscr{O}_Y\left( f(U) \right)$ so it is enough to show that this is a sheaf.
Let $U_1$ and $U_2$ be two open subsets of $X$ and let $U_{12}$ be their intersection.
Suppose that $s_1 \in f^{-1} \mathscr{O}_Y(U_1) = \mathscr{O}_Y\left( f(U_1) \right)$ and $s_2 \in f^{-1} \mathscr{O}_Y(U_2) = \mathscr{O}_Y\left( f(U_2) \right)$ are two local sections which agree on the intersection $f^{-1} \mathscr{O}_Y(U_{12}) = \mathscr{O}_Y\left( f(U_{12}) \right)$.
If it can be shown that $f(U_{12}) = f(U_1) \cap f(U_2)$ then, since $\mathscr{O}_Y$ is a sheaf on $Y$, $s_1$ and $s_2$ can be glued to obtain a unique section $s$ over $f(U_1) \cup f(U_2) = f(U_1 \cup U_2)$.
It is therefore shown that 
$$f(U_{12}) = f(U_1) \cap f(U_2).$$
The inclusion $\subseteq$ always holds.
Suppose that $y \in f(U_1) \cap f(U_2)$ and let $x_1 \in U_1$ and $x_2 \in U_2$ be such that $f(x_1) = f(x_2) = y$.
Consider the fibre $X_y = X \times_Y \mathrm{Spec}\, \kappa(y)$.
Note that $U_1 \cap X_y \ni x_1$ and $U_2 \cap X_y \ni x_2$ are two non-empty open subsets of $X_y$.
By assumption $X_y$ is irreducible, hence the intersection $U_{12} \cap X_y$ is non-empty.
Let $x$ be in this intersection, then $x \in U_{12}$ and $f(x) = y$.
This shows (\ref{eq:inverse_sheaf_description}).
Finally, let $V \subseteq Y$ be an open subset of $Y$.
Then the morphism
$$ \mathscr{O}_Y(V) \rightarrow f_* f^{-1} \mathscr{O}_Y(V) = \mathscr{O}_Y \left( f \left( f^{-1}(V) \right) \right) $$
is an isomorphism.
Indeed, since $f$ is surjective, $f \left( f^{-1}(V) \right) = V$.
\end{proof}

\subsection{Algebraic Leaves}
\label{subsec:algebraic_leaves}

This subsection defines tangent subvarieties, algebraic leaves and proves that a tangent subvariety is contained in the fibre of an invariant morphism (Proposition \ref{prop:decomposition}).
Furthermore, it introduces the functor of smooth algebraic leaves and shows a uniqueness-type result (Lemma \ref{lem:unique_algebraic_leaf}).

\begin{definition}[Smooth algebraic leaves]
\label{def:algebraic_leaves}
Let $S$ be a scheme and let $(X, \mathscr{F})$ be a directed space over $S$. Let $T$ be an $S$-scheme. A \emph{smooth family of subvarieties tangent to $\mathscr{F}$ over $T$} (\emph{\emph{respectively} smooth family of algebraic leaves of $ \mathscr{F}$ over $T$}) is an $S$-scheme $Z$ together with an $S$-morphism $Z \rightarrow X \times_S T$ such that
\begin{enumerate}[label={(\roman*)}]
\item \label{tangent:immersion}
$Z \rightarrow X \times_S T$ is a closed immersion.
\item \label{tangent:smooth}
$Z$ is smooth over $T$.
\item \label{tangent:integral}
$Z \rightarrow T$ has geometrically integral fibres.
\item \label{tangent:distribution}
$ \left( Z, \Omega_{Z/T}^1 \right) \rightarrow (X, \mathscr{F}) $ is a (\emph{respectively} weakly \'{e}tale) morphism of directed spaces.
\end{enumerate}
\end{definition}

\begin{lemma}[Smooth algebraic leaves and open subsets]
\label{lem:algebraic_leaves_open_subsets}
Let $S$ be a scheme and let $(X, \mathscr{F})$ be a directed space over $S$.
Let $T$ be a scheme over $S$ and let $g : U \rightarrow X$ be an open immersion.
Suppose that $\iota : Z \rightarrow X \times_S T$ is a smooth family of subvarieties tangent to $\mathscr{F}$ (respectively a smooth family of algebraic leaves of $\mathscr{F}$) over $T$ and let $W = Z \times_X U$.
Suppose that $W \rightarrow T$ is surjective, then $W$ is a smooth family of subvarieties tangent to $g^{\#} \mathscr{F}$ (\emph{respectively} a smooth family of algebraic leaves of $g^{\#} \mathscr{F}$)  over $T$.
\end{lemma}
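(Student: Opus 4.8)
The statement asserts that restricting a smooth family of subvarieties tangent to $\mathscr{F}$ (resp. algebraic leaves) to an open subset $U$ of $X$ again yields such a family over $g^{\#}\mathscr{F}$, provided the restriction still surjects onto $T$. The plan is to verify the four conditions \ref{tangent:immersion}--\ref{tangent:distribution} of Definition \ref{def:algebraic_leaves} one at a time for $W = Z \times_X U \to U \times_S T$. The first two are formal: since $\iota : Z \to X \times_S T$ is a closed immersion and closed immersions are stable under base change, $W \to U \times_S T$ is a closed immersion, giving \ref{tangent:immersion}; and since $Z \to T$ is smooth, its base change $W \to T$ (note $W = Z \times_X U$ is an open subscheme of $Z$, being the preimage of the open $U \subseteq X$ under $Z \to X$) is smooth, giving \ref{tangent:smooth}. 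For \ref{tangent:integral}, the fibres of $W \to T$ are open subschemes of the corresponding fibres of $Z \to T$; since the latter are geometrically integral and we assume $W \to T$ is surjective, each fibre of $W \to T$ is a nonempty open subscheme of a geometrically integral scheme, hence geometrically integral. (This is exactly where the surjectivity hypothesis is used: without it some fibres could be empty.)

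The remaining point \ref{tangent:distribution} is the one requiring a small argument. We must show that $(W, \Omega^1_{W/T}) \to (U, g^{\#}\mathscr{F})$ is a (resp. weakly étale) morphism of directed spaces. First, $W \to U$ factors as $W \hookrightarrow Z \to X$ composed with nothing new — rather, $W = Z \times_X U \to U$ is the base change of $Z \to X$ along $g : U \to X$, and the composite $W \to U \to X$ equals $W \hookrightarrow Z \to X$. Since $W \to Z$ is an open immersion, it is weakly étale, so $\Omega^1_{W/T} = \Omega^1_{Z/T}|_W$ and, using the assumption that $(Z,\Omega^1_{Z/T}) \to (X,\mathscr{F})$ is a morphism of directed spaces together with Lemma \ref{lem:composition_directed} and Lemma \ref{lem:etale_pullback}, the composite $(W, \Omega^1_{W/T}) \to (Z,\Omega^1_{Z/T}) \to (X,\mathscr{F})$ is a morphism of directed spaces. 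Now I must promote this to a morphism into $(U, g^{\#}\mathscr{F})$. Here the key is the universal property of the pushout defining $g^{\#}\mathscr{F}$ (Definition \ref{def:pullback}): the morphism $h : W \to U$ is an $S$-morphism, so pulling the pushout square $g^*\mathscr{G} \leftarrow g^*\Omega^1_{X/S} \to \Omega^1_{U/S}$ along $h^*$ and comparing with the factorisation through $\mathscr{F}$ coming from the already-established morphism $(W,\Omega^1_{W/T}) \to (X,\mathscr{F})$, one obtains a factorisation $h^* g^{\#}\mathscr{F} \to \Omega^1_{W/S} \to \Omega^1_{W/T}$; combined with the exact sequence $\Omega^1_{U/S}|_W \to \Omega^1_{W/S} \to \Omega^1_{W/T} \to 0$ this produces the required dashed arrow $h^* (g^{\#}\mathscr{F}) \to \Omega^1_{W/T}$. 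When the original map $Z \to X$ is weakly étale into $(X,\mathscr{F})$, i.e. $\Omega^1_{Z/T} \cong$ (pullback of $\mathscr{F}$), the same diagram chase shows the dashed arrow for $W$ is an isomorphism, using that $g^{\#}\mathscr{F}$ is compatible with further pullback (Lemma \ref{lem:pullback_functorial} gives $h^{\#}(g^{\#}\mathscr{F}) = (g\circ h)^{\#}\mathscr{F}$).

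The main obstacle is bookkeeping rather than a genuine mathematical difficulty: one must be careful that $g^{\#}\mathscr{F}$ is a pushout, not simply $g^*\mathscr{F}$, so the factorisation through $\Omega^1_{W/T}$ has to be extracted via the pushout's universal property rather than by naive pullback — in particular one needs the sequence $h^*g^*\Omega^1_{X/S} \to h^*\Omega^1_{U/S} \to \Omega^1_{W/S}$ and the compatibility $h^*g^*\Omega^1_{X/S} = (gh)^*\Omega^1_{X/S}$ to glue the two halves of the diagram. A secondary point to check is that forming $W = Z\times_X U$ commutes appropriately with the product over $S$, i.e. that $W \to X\times_S T$ indeed factors through $U \times_S T$ as a closed immersion; this follows because $W = Z \times_{X\times_S T} (U\times_S T)$ and closed immersions base change. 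Once these identifications are in place, conditions \ref{tangent:immersion}--\ref{tangent:distribution} all follow formally, and the case of algebraic leaves (weakly étale) is handled by the same argument with "morphism of directed spaces" replaced by "weakly étale morphism of directed spaces" throughout, invoking the weakly étale clause of Lemma \ref{lem:composition_directed}.
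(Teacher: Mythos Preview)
Your verification of \ref{tangent:immersion}--\ref{tangent:integral} is correct and matches the paper's proof exactly, including the identification $W = Z \times_{X\times_S T} (U\times_S T)$ and the use of surjectivity to guarantee nonempty fibres.

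For \ref{tangent:distribution} you arrive at the right conclusion, but you make it harder than necessary and introduce a small error along the way. You cite Lemma~\ref{lem:etale_pullback} yourself, but then do not use its content: since $g$ is an open immersion (hence weakly \'etale), that lemma gives $g^{\#}\mathscr{F} = g^*\mathscr{F} = \mathscr{F}|_U$, so there is no pushout to unwind. With $j:W\hookrightarrow Z$ the open immersion and $h:W\to U$ the projection, one has $h^*(g^{\#}\mathscr{F}) = (gh)^*\mathscr{F} = (\iota j)^*\mathscr{F} = j^*(\iota^*\mathscr{F})$ and $\Omega^1_{W/T} = j^*\Omega^1_{Z/T}$; applying $j^*$ to the diagram assumed for $Z$ immediately gives the required (resp.\ weakly \'etale) diagram for $W$. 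This is what the paper does, citing Lemma~\ref{lem:base_change_foliation} and Lemma~\ref{lem:etale_pullback}.

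The actual slip in your write-up is the asserted factorisation $h^*g^{\#}\mathscr{F} \to \Omega^1_{W/S} \to \Omega^1_{W/T}$: in general there is no map $h^*g^{\#}\mathscr{F} \to \Omega^1_{W/S}$, because the kernel of $h^*\Omega^1_{U/S}\to h^*g^{\#}\mathscr{F}$ need not die in $\Omega^1_{W/S}$. It only dies after composing with $\Omega^1_{W/S}\to\Omega^1_{W/T}$ (which is precisely the content of $(W,\Omega^1_{W/T})\to(X,\mathscr{F})$ being a morphism of directed spaces, already established). So the dashed arrow to $\Omega^1_{W/T}$ exists, but not via $\Omega^1_{W/S}$. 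This does not break your argument, but the sentence as written is incorrect.
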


\begin{proof}
Because closed immersions are stable under base change (\cite[\href{https://stacks.math.columbia.edu/tag/01JY}{Lemma 01JY}]{stacks-project}), $W = Z \times_X U \rightarrow U \times_S T$ is a closed immersion.
Hence \ref{tangent:immersion} holds.
Further $W \rightarrow T$ is smooth since the morphisms in $W \rightarrow Z \rightarrow T$ are smooth.
The former because it is the base change of an open immersion (\cite[\href{https://stacks.math.columbia.edu/tag/01VC}{Lemma 01VC}]{stacks-project}), the latter by assumption.
Thus \ref{tangent:smooth} holds.
Having geometrically integral fibres follows from the fact that $W \rightarrow Z$ is an open immersion, hence the geometric fibres of $W \rightarrow T$ are non-empty (since $W \rightarrow T$ is surjective) open subsets of the geometric fibres of $Z \rightarrow T$.
But non-empty open subschemes of integral schemes are integral.
Thus \ref{tangent:integral} holds.
Furthermore
$$\left( W, \Omega_{W/T}^1 \right) \rightarrow (U, g^{\#} \mathscr{F})$$
is a (\emph{respectively} weakly \'{e}tale) morphism of directed spaces.
This follows from Lemma \ref{lem:base_change_foliation} and Lemma \ref{lem:etale_pullback}.
Hence \ref{tangent:distribution} also holds.
\end{proof}

\begin{lemma}[Smooth algebraic leaves and base change]
\label{lem:base_change_algebraic_leaves}
Let $S$ be a scheme and let $(X, \mathscr{F})$ be a directed space over $S$.
Let $T$ be a scheme over $S$ and let $T^{\prime} \rightarrow T$ be a morphism.
Suppose that $\iota : Z \rightarrow X \times_S T$ is a smooth family of subvarieties tangent to $\mathscr{F}$ (respectively a smooth family of algebraic leaves of $\mathscr{F}$) over $T$.
Then the base change 
$$ \iota^{\prime} : Z^{\prime} = Z \times_T T^{\prime} \rightarrow X \times_S T^{\prime} $$
is a smooth family of subvarieties tangent to $\mathscr{F}$ (respectively a smooth family of algebraic leaves of $\mathscr{F}$) over $T^{\prime}$.
\end{lemma}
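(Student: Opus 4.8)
The plan is to verify, one at a time, the four conditions of Definition \ref{def:algebraic_leaves} for $\iota^{\prime} : Z^{\prime} = Z \times_T T^{\prime} \to X \times_S T^{\prime}$, using that each of them is stable under base change. The starting point is the identification $X \times_S T^{\prime} = (X \times_S T) \times_T T^{\prime}$, which exhibits $\iota^{\prime}$ as the base change of $\iota : Z \to X \times_S T$ along $T^{\prime} \to T$; hence \ref{tangent:immersion} holds because closed immersions are stable under base change (\cite[\href{https://stacks.math.columbia.edu/tag/01JY}{Lemma 01JY}]{stacks-project}), and $Z^{\prime} \to T^{\prime}$, being the base change of the smooth morphism $Z \to T$, is smooth, which is \ref{tangent:smooth}.

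For \ref{tangent:integral} I would argue on geometric fibres. If $t^{\prime} \in T^{\prime}$ lies over $t \in T$, then for every field extension $L^{\prime}$ of $\kappa(t^{\prime})$ one has
$$ Z^{\prime} \times_{T^{\prime}} \mathrm{Spec}\, L^{\prime} \;=\; Z \times_T \mathrm{Spec}\, L^{\prime} \;=\; \bigl( Z \times_T \mathrm{Spec}\, \kappa(t) \bigr) \times_{\kappa(t)} L^{\prime}, $$
and since $L^{\prime}$ is in particular a field extension of $\kappa(t)$ and $Z \to T$ has geometrically integral fibres, this scheme is integral; hence \ref{tangent:integral} holds.

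The substantive point is \ref{tangent:distribution}. Let $p : X \times_S T \to X$ be the first projection, so that the morphism $Z \to X$ underlying the family is $p \circ \iota$. Viewed over the base $T$, the closed immersion $\iota$ is then a (\emph{respectively} weakly \'{e}tale) morphism of directed spaces $(Z, \Omega_{Z/T}^1) \to (X \times_S T, p^{*} \mathscr{F})$: it is universally separable, being a closed immersion, and the required (\emph{resp.} invertible) dashed morphism $\iota^{*} p^{*} \mathscr{F} = (p \circ \iota)^{*} \mathscr{F} \to \Omega_{Z/T}^1$ is precisely the one furnished by condition \ref{tangent:distribution} for $Z$. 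Applying Lemma \ref{lem:base_change_foliation} over the base $T$ to this morphism and to the base-change morphism $T^{\prime} \to T$, and using that $\Omega_{Z^{\prime}/T^{\prime}}^1 = \mathrm{pr}^{*} \Omega_{Z/T}^1$ for the projection $\mathrm{pr} : Z^{\prime} \to Z$ (\cite[\href{https://stacks.math.columbia.edu/tag/01V0}{Lemma 01V0}]{stacks-project}), one obtains that $\iota^{\prime} : (Z^{\prime}, \Omega_{Z^{\prime}/T^{\prime}}^1) \to (X \times_S T^{\prime}, (p^{\prime})^{*} \mathscr{F})$ is a (\emph{resp.} weakly \'{e}tale) morphism of directed spaces, where $p^{\prime} : X \times_S T^{\prime} \to X$ is the first projection. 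Composing with the weakly \'{e}tale morphism $(X \times_S T^{\prime}, (p^{\prime})^{*} \mathscr{F}) \to (X, \mathscr{F})$, whose dashed morphism is the identity of $(p^{\prime})^{*} \mathscr{F}$, by Lemma \ref{lem:composition_directed} then yields that $(Z^{\prime}, \Omega_{Z^{\prime}/T^{\prime}}^1) \to (X, \mathscr{F})$ is a (\emph{resp.} weakly \'{e}tale) morphism of directed spaces, which is \ref{tangent:distribution}.

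The one point that genuinely requires care, and which I expect to be the only real obstacle, is the universal separability demanded in \ref{tangent:distribution}: one must know that the composite $Z^{\prime} \to X$ is universally separable. Since $Z \to X$ already is, it suffices that the projection $Z^{\prime} \to Z$ (equivalently $X \times_S T^{\prime} \to X$) be universally separable, which holds whenever $T^{\prime} \to T$ is universally separable --- in particular whenever everything is defined over a field of characteristic zero (Remark \ref{rem:char_zero_separable}), the setting of the paper's main results. Granting this, the remainder is a routine transport of base-change stability through the fibre square.
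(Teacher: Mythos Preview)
Your argument is correct and complete modulo the universal separability issue you flag at the end, and your diagnosis of that issue is accurate: the lemma as stated over an arbitrary base $S$ genuinely requires $Z^{\prime} \to Z$ (equivalently $p^{\prime}$) to be universally separable, which is not automatic outside characteristic zero. The paper's own proof has precisely the same gap and simply does not mention it.

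Where you differ from the paper is in the factorisation used for \ref{tangent:distribution}. The paper takes the shorter route: it observes that the projection $\mathrm{pr} : Z^{\prime} \to Z$ is a weakly \'{e}tale morphism of directed spaces $(Z^{\prime}, \Omega_{Z^{\prime}/T^{\prime}}^1) \to (Z, \Omega_{Z/T}^1)$ over $S$, since $\mathrm{pr}^{*}\Omega_{Z/T}^1 \cong \Omega_{Z^{\prime}/T^{\prime}}^1$ by \cite[\href{https://stacks.math.columbia.edu/tag/01V0}{Lemma 01V0}]{stacks-project}, and then composes directly with the given morphism $(Z, \Omega_{Z/T}^1) \to (X, \mathscr{F})$ via Lemma~\ref{lem:composition_directed}. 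Your route instead reinterprets $\iota$ over the base $T$, invokes Lemma~\ref{lem:base_change_foliation}, and then composes with $(X \times_S T^{\prime}, (p^{\prime})^{*}\mathscr{F}) \to (X, \mathscr{F})$. Both factorisations are valid; the paper's avoids the detour through Lemma~\ref{lem:base_change_foliation} and the change of base from $T$ back to $S$, at the cost of being terser. Your version has the merit of making explicit where the separability hypothesis enters.
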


\begin{proof}
\ref{tangent:immersion}, \ref{tangent:smooth} and \ref{tangent:integral} are stable under base change.
Note that, by \cite[\href{https://stacks.math.columbia.edu/tag/01V0}{Lemma 01V0}]{stacks-project},
$$ \left(Z^{\prime}, \Omega_{Z^{\prime}/T^{\prime}}^1 \right) \rightarrow \left(Z, \Omega_{Z/T}^1 \right) $$
is a weakly \'{e}tale morphism of directed spaces, hence
$$ \left(Z^{\prime}, \Omega_{Z^{\prime}/T^{\prime}}^1 \right) \rightarrow \left(X, \mathscr{F} \right) $$
is a (\emph{respectively} weakly \'{e}tale) morphism of directed spaces by Lemma \ref{lem:composition_directed}.
\end{proof}

\begin{proposition}[Tangent variety is contained in fibre of invariant morphism]
\label{prop:decomposition}
Let $S$ be a scheme and let $(X, \mathscr{F})$ be a directed space over $S$.
Suppose that $f : X \rightarrow Y$ is an $\mathscr{F}$-invariant morphism of schemes. Let $\mathrm{Spec} \, L \rightarrow S$ be a morphism of schemes where $L$ is an algebraically closed field and let $Z \rightarrow X \times_S \mathrm{Spec} \, L$ be a smooth subvariety tangent to $\mathscr{F}$ over $\mathrm{Spec}\,L$.
Then there exists a unique factorisation 
\begin{center}
\begin{tikzcd}
Z \arrow[d] \arrow[r] & X \arrow[d]  \\
\mathrm{Spec}\,L \arrow[r, dashed] & Y.
\end{tikzcd}
\end{center}
\end{proposition}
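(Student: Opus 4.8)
The plan is to prove that the composite $h\colon Z\to X\xrightarrow{f}Y$ is constant; the dashed arrow, and its uniqueness, then come for free. The first step combines the two hypotheses at the level of Kähler differentials. Since $f$ is $\mathscr{F}$-invariant there is a factorisation $\Omega_{X/S}^{1}\to\Omega_{X/Y}^{1}\to\mathscr{F}$, and the cotangent sequence makes $f^{*}\Omega_{Y/S}^{1}\to\Omega_{X/S}^{1}\to\Omega_{X/Y}^{1}$ vanish; hence $f^{*}\Omega_{Y/S}^{1}\to\Omega_{X/S}^{1}\to\mathscr{F}$ is the zero morphism. Writing $g\colon Z\to X$ for the structure morphism of the tangent subvariety and pulling back along $g$, the composite $g^{*}f^{*}\Omega_{Y/S}^{1}\to g^{*}\Omega_{X/S}^{1}\to g^{*}\mathscr{F}$ is zero. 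Tangency of $Z$ to $\mathscr{F}$ supplies a commutative square whose dashed arrow $g^{*}\mathscr{F}\to\Omega_{Z/L}^{1}$ is compatible with the natural maps $g^{*}\Omega_{X/S}^{1}\to\Omega_{Z/S}^{1}\to\Omega_{Z/L}^{1}$. Chasing this square, together with functoriality of the cotangent map (so that $h^{*}\Omega_{Y/S}^{1}\to\Omega_{Z/S}^{1}$ factors as $g^{*}f^{*}\Omega_{Y/S}^{1}\to g^{*}\Omega_{X/S}^{1}\to\Omega_{Z/S}^{1}$), shows that the composite of sheaf morphisms $h^{*}\Omega_{Y/S}^{1}\to\Omega_{Z/S}^{1}\to\Omega_{Z/L}^{1}$ is zero.

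I would then convert this vanishing into constancy, working on an affine cover. Because $Z$ is smooth over $\mathrm{Spec}\,L$ it is locally of finite type over $L$, and because its fibre over $\mathrm{Spec}\,L$ is geometrically integral, $Z$ is integral, in particular nonempty. Fix $z\in Z$, choose an affine open $\mathrm{Spec}\,A\subseteq Y$ containing $h(z)$, and then an affine open $\mathrm{Spec}\,B\subseteq h^{-1}(\mathrm{Spec}\,A)$ containing $z$; here $B$ is a finitely generated $L$-algebra which is a domain, and $h$ restricts to a ring homomorphism $\phi\colon A\to B$. Restricting the vanishing of the first step to $\mathrm{Spec}\,B$ says precisely that $d\phi(a)=0$ in $\Omega_{B/L}^{1}$ for every $a\in A$, i.e. $\phi(A)\subseteq\ker\!\bigl(B\xrightarrow{d}\Omega_{B/L}^{1}\bigr)$.

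The technical heart, and the step I expect to be the main obstacle, is the identity $\ker\!\bigl(B\xrightarrow{d}\Omega_{B/L}^{1}\bigr)=L$ for $B$ a domain of finite type over an algebraically closed field $L$ of characteristic zero. To see it, pass to $E=\mathrm{Frac}(B)$: since $\Omega_{E/L}^{1}=\Omega_{B/L}^{1}\otimes_{B}E$ one has $\ker(d_{B})\subseteq\ker(d_{E})$, and in characteristic zero $\ker\!\bigl(E\xrightarrow{d}\Omega_{E/L}^{1}\bigr)$ equals the relative algebraic closure of $L$ in $E$ (using that $E/L$ is separably generated), which is $L$ because $L$ is algebraically closed. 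This is the only point at which characteristic zero and the algebraic closedness of $L$ enter. It follows that $\phi$ factors as $A\to L\xrightarrow{\sigma}B$ with $\sigma$ the structure map, so $h|_{\mathrm{Spec}\,B}$ factors through $\mathrm{Spec}\,L$, hence is the constant morphism at some $S$-point $\mathrm{Spec}\,L\to Y$. Covering $Z$ by such affine opens, $h$ is locally constant; since $Z$ is irreducible any two members of the cover meet, so the values agree and glue to a single $\psi\colon\mathrm{Spec}\,L\to Y$ with $h=\psi\circ(Z\to\mathrm{Spec}\,L)$, which is the desired factorisation (here irreducibility of $Z$, rather than any separatedness of $Y$, is what legitimises the gluing). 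For uniqueness, note that $Z$, being nonempty and of finite type over the algebraically closed field $L$, has an $L$-point; this yields a section of $Z\to\mathrm{Spec}\,L$, and precomposing $h=\psi\circ(Z\to\mathrm{Spec}\,L)$ with it shows that $\psi$ is determined by $h$.
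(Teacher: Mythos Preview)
Your argument has a genuine gap: the identity $\ker\!\bigl(B\xrightarrow{d}\Omega_{B/L}^{1}\bigr)=L$ is false in positive characteristic (for $B=L[s]$ one has $s^{p}\in\ker d$), yet the proposition is stated over an arbitrary base scheme $S$, with no characteristic hypothesis. You explicitly invoke characteristic zero at this step, but nothing in the statement provides it. So as written, your proof only establishes the proposition when $L$ has characteristic zero.

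What replaces characteristic zero in the paper's proof is the \emph{residual separability} built into the definitions: both $f\colon(X,\mathscr{F})\to(Y,0)$ and $g\colon(Z,\Omega_{Z/L}^{1})\to(X,\mathscr{F})$ are morphisms of directed spaces, hence universally separable, and so is their composite $h=f\circ g$. The paper works at the generic point $z$ of $Z$ with image $y$: from your vanishing $h^{*}\Omega_{Y/S}^{1}\to\Omega_{Z/L}^{1}=0$ one gets $\Omega_{\kappa(z)/\kappa(y)}^{1}\cong\Omega_{\kappa(z)/L}^{1}$, and since $\kappa(y)\subseteq\kappa(z)$ is separable (residual separability) and $L\subseteq\kappa(y)$ is separable ($L$ perfect), both extensions are separably generated and one can compare transcendence degrees to conclude $\kappa(y)=L$. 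One then finishes by a specialization argument (the image of $Z$ is contained in the closure of $\{y\}$, but $y$ is a closed $L$-point) and uses reducedness of $Z$ to upgrade the set-theoretic factorisation to a scheme-theoretic one. Your first step (the differential vanishing) and your gluing/uniqueness are fine and match the paper's strategy; the missing ingredient is precisely this use of separability in place of characteristic zero.
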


\begin{proof}
Since $Z \rightarrow \mathrm{Spec}\,L$ is an epimorphism, uniqueness is clear. \par

After base changing by $\mathrm{Spec} \, L \rightarrow S$, it may be assumed that $f : X \rightarrow Y$ is an $\mathscr{F}$-invariant morphism of schemes over $\mathrm{Spec}\,L$ (Lemma \ref{lem:invariant_base_change}).
The composition of morphisms of directed spaces
$$ \left( Z, \Omega_{Z/\mathrm{Spec}\,L}^1 \right) \xrightarrow{g} (X, \mathscr{F}) \xrightarrow{f} (Y,0)$$
is a morphism of directed spaces (Lemma \ref{lem:composition_directed}).
It follows that
$$ \Omega_{Z/Y}^1 \cong \Omega_{Z/\mathrm{Spec}\,L}^1.$$
Let $z$ be the generic point of $Z$ and let $y$ be its image under $f \circ g$. Because $g \circ f$ is residually separable and $L$ is perfect, there are separable field extensions
$$ L \rightarrow \kappa(y) \rightarrow \kappa(z).$$
Because $Z$ is locally of finite type over $\mathrm{Spec}\,L$, $ L \rightarrow \kappa(z)$ is finitely generated.
But then $L \rightarrow \kappa(y)$ and $\kappa(y) \rightarrow \kappa(z)$ are also finitely generated, hence separably generated.
Now
\begin{align*}
\mathrm{tr\,deg}_{\kappa(y)} \kappa(z) &=
\mathrm{dim}_{\kappa(z)} \Omega_{\kappa(z)/\kappa(y)}^1 & \\
&= \mathrm{dim}_{\kappa(z)} \Omega_{Z/Y}^1 \otimes \kappa(z) & \\
&= \mathrm{dim}_{\kappa(z)} \Omega_{Z/L}^1 \otimes \kappa(z) & \\
&= \mathrm{dim}_{\kappa(z)} \Omega_{\kappa(z)/L}^1 & \\
&= \mathrm{tr\,deg}_{L} \kappa(z) & .
\end{align*}

This implies that $\mathrm{tr\,deg}_{L} \kappa(y) = 0$ so that $L \rightarrow \kappa(y)$ is an algebraic field extension.
Because $L$ is algebraicailly closed $L = \kappa(y)$.
Now $y$ is an $L$-rational point of $Y$, hence it is a closed point of $Y$.
Since $z^{\prime}$ is a specialisation of $z$, $y^{\prime}$ is a specialisation of $y$.
But $y$ is a closed point, hence $y^{\prime} = y$.
As a result, the set-theoretic image of $Z \rightarrow Y$ is $\{ y \}$. 
Using the fact that $Z$ is reduced and applying \cite[\href{https://stacks.math.columbia.edu/tag/056B}{Lemma 056B}]{stacks-project} yields that $\mathrm{Spec}\,L \rightarrow Y$ is the scheme-theoretic image of $Z \rightarrow Y$, hence there is a factorisation
$$ Z \rightarrow \mathrm{Spec}\,L \rightarrow Y.$$
\end{proof}

\begin{remark}[Construction of the moduli space]
\label{rem:moduli_space}
Proposition \ref{prop:decomposition} is the key step in showing that a geometric quotient is a moduli space for the geometric points.
If one wanted to construct the moduli space parametrising all algebraic leaves, one would need to show the same statement as above for an arbitrary family of algebraic leaves $Z \rightarrow X \times_S T$ over an $S$-scheme $T$.
However, this statement seems difficult to prove.
\end{remark}

\begin{definition}[Functor of smooth algebraic leaves]
\label{def:functor_algebraic_leaves}
Let $S$ be a scheme and let $(X, \mathscr{F})$ be a directed space over $S$.
Let $T$ be a scheme over $S$.
The \emph{functor of smooth algebraic leaves} is defined to be
$$ F(T) = \left\{ Z \rightarrow X \times_S T \, | \, \text{$Z$ is a smooth family of algebraic leaves of $\mathscr{F}$ over $T$} \right\} $$
This is well-defined since the pullback of a smooth family of algebraic leaves is a smooth family of algebraic leaves (Lemma \ref{lem:base_change_algebraic_leaves}).
A morphism $\pi : X \rightarrow Y$ represents the functor $F$ if for all $S$ schemes $T$, there is a natural bijection
$$ \mathrm{Hom} (T, Y) \longleftrightarrow F(T) $$ 
given by the map which takes an $S$-morphism $T \rightarrow Y$ to the fibre product $X \times_Y T$.
If such a morphism $\pi$ exists, then, under the bijection, $\pi : X \rightarrow Y$ corresponds to the \emph{algebraic graph} of the distribution.
\end{definition}

\begin{lemma}[Uniqueness of smooth algebraic leaves]
\label{lem:unique_algebraic_leaf}
Let $S$ be a scheme and let $(X, \mathscr{F})$ be a directed space over $S$.
Let $T$ be a scheme over $S$.
Suppose that $\iota : Z \rightarrow X \times_S T$ and $\iota^{\prime} : Z^{\prime} \rightarrow X \times_S T$ are two smooth algebraic leaves of $\mathscr{F}$ over $T$ such that there exists $T$-morphism $\gamma : Z \rightarrow Z^{\prime}$ admitting a factorisation $\iota = \iota^{\prime} \circ \gamma$.
Then $\gamma : Z \rightarrow Z^{\prime}$ is an isomorphism.
\end{lemma}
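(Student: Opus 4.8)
The plan is to show that $\gamma$ is a closed immersion and, simultaneously, an open immersion, which together force it to be an isomorphism. First I would use the fact that $\iota = \iota' \circ \gamma$ is a closed immersion while $\iota'$ is a closed immersion: by the usual cancellation property for immersions (\cite[\href{https://stacks.math.columbia.edu/tag/07RK}{Lemma 07RK}]{stacks-project}), $\gamma : Z \rightarrow Z'$ is an immersion, and since $Z$ is separated over $T$ and $Z' \rightarrow T$ is separated, $\gamma$ is in fact a closed immersion. So $\gamma$ identifies $Z$ with a closed subscheme of $Z'$.

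Next I would argue that this closed immersion is surjective on the level of fibres, hence an isomorphism. Fix a geometric point $t : \mathrm{Spec}\,L \rightarrow T$ and pass to the fibres $Z_t \hookrightarrow Z'_t$. Both $Z_t$ and $Z'_t$ are smooth over $L$ (property \ref{tangent:smooth}) and geometrically integral (property \ref{tangent:integral}), hence integral of some dimension. The key point is that property \ref{tangent:distribution} pins down this dimension: the morphism of directed spaces $(Z, \Omega^1_{Z/T}) \rightarrow (X, \mathscr{F})$ (respectively $(Z', \Omega^1_{Z'/T}) \rightarrow (X, \mathscr{F})$) means that at every point the induced map on stalks of $\Omega^1$ to $\mathscr{F}$ factors through the relative differentials, and a rank count of the pulled-back sheaves shows that $\dim Z_t = \dim Z'_t$ both equal the generic rank of $\mathscr{F}$ — or, more robustly, I would avoid the rank computation and instead argue directly: $\gamma_t : Z_t \hookrightarrow Z'_t$ is a closed immersion of integral schemes, and on the generic point it induces an inclusion of the corresponding local rings compatible with the surjections onto the stalk of $\mathscr{F}$ at the image point. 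Because $Z_t \rightarrow X$ and $Z'_t \rightarrow X$ both induce the \emph{same} (or, in the non-\'etale case, compatible) quotient $\mathscr{F}$ at that point, and $\Omega^1_{Z_t/L} \otimes \kappa$, $\Omega^1_{Z'_t/L} \otimes \kappa$ agree with that quotient, the two schemes have equal transcendence degree over $L$ at the generic point. A closed immersion of integral schemes of the same dimension over a field is an isomorphism, since the ideal sheaf must be zero (it is a prime ideal in a domain of the same Krull dimension, hence the zero ideal). Therefore $\gamma_t$ is an isomorphism for every geometric point $t$.

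Finally, having shown $\gamma$ is a closed immersion that is an isomorphism on all geometric fibres over $T$, I would conclude that $\gamma$ itself is an isomorphism. Since $Z \rightarrow T$ and $Z' \rightarrow T$ are both smooth, hence flat and locally of finite presentation, and $\gamma$ is a closed immersion of finite presentation inducing an isomorphism on every fibre, $\gamma$ is flat (by the fibrewise criterion for flatness, \cite[\href{https://stacks.math.columbia.edu/tag/039A}{Lemma 039A}]{stacks-project}); a flat closed immersion is an open immersion, hence $\gamma$ is simultaneously open and closed, and since it is surjective (it is surjective on fibres and $Z' \rightarrow T$ is surjective — or one notes its image is open, closed, and meets every fibre), it is an isomorphism.

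The main obstacle I expect is the fibrewise dimension comparison in the middle step: one must extract from the abstract "morphism of directed spaces" condition \ref{tangent:distribution} the concrete fact that $Z$ and $Z'$ have the same relative dimension over $T$. In the weakly \'etale case this is immediate because $\Omega^1_{Z/T}$ and $\Omega^1_{Z'/T}$ both pull back isomorphically to $\mathscr{F}$ restricted to the leaf, so their ranks agree; in the merely tangent case one needs the rank of $\mathscr{F}$ along the leaf to be constant and to coincide with $\dim Z/T$, which is where the smoothness and integrality hypotheses \ref{tangent:smooth}, \ref{tangent:integral} do the work. Everything else is a chain of standard descent-and-immersion lemmas from the Stacks Project.
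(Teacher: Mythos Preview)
Your approach is correct and reaches the same conclusion as the paper, but by a somewhat different route. After showing $\gamma$ is a closed immersion (as the paper does; note the relevant input for \cite[\href{https://stacks.math.columbia.edu/tag/07RK}{07RK}]{stacks-project} is simply that $\iota'$ is separated, being an immersion), you pass to geometric fibres, compare dimensions via the weakly \'etale condition, and then globalize using the crit\`ere de platitude par fibres to conclude $\gamma$ is flat, hence an open-and-closed surjective immersion. The paper instead works globally from the outset: it feeds the composite $Z \rightarrow Z' \rightarrow T$ into the six-term exact sequence of na\"{i}ve cotangent complexes (\cite[\href{https://stacks.math.columbia.edu/tag/0E44}{0E44}]{stacks-project}); smoothness of $Z/T$ kills $H^{-1}(NL_{Z/T})$, and the weakly \'etale hypothesis makes $\gamma^*\Omega^1_{Z'/T} \rightarrow \Omega^1_{Z/T}$ an isomorphism directly, so $NL_{Z/Z'}$ is acyclic and $\gamma$ is \'etale, hence (being a monomorphism locally of finite presentation) an open immersion. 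Connectedness of the fibres then forces surjectivity. Your fibrewise detour is legitimate but buys nothing extra here, since the key isomorphism $\gamma^*\Omega^1_{Z'/T} \cong \Omega^1_{Z/T}$ already holds globally; exploiting it through the cotangent complex sidesteps both the Krull-dimension bookkeeping and the appeal to fibrewise flatness. Finally, your concern about the ``merely tangent'' case is moot: the lemma is stated for algebraic leaves, so the weakly \'etale version of \ref{tangent:distribution} is part of the hypothesis.
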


\begin{proof}
Since $\iota^{\prime} : Z^{\prime} \rightarrow X \times_S T$ is an immersion, it is separated (\cite[\href{https://stacks.math.columbia.edu/tag/01L7}{Lemma 01L7}]{stacks-project}).
It follows that $\gamma : Z \rightarrow Z^{\prime}$ is a closed immersion (\cite[\href{https://stacks.math.columbia.edu/tag/07RK}{Lemma 07RK}]{stacks-project}). \par

Now consider the six term exact sequence given by the na\"{i}ve cotangent complexes of the morphisms $Z \rightarrow Z^{\prime} \rightarrow T$ (\cite[\href{https://stacks.math.columbia.edu/tag/0E44}{Lemma 0E44}]{stacks-project}) 
$$ H^{-1} \left( {NL}_{Z/T} \right) \rightarrow H^{-1} \left( {NL}_{Z/Z^{\prime}} \right) \rightarrow \gamma^* \Omega_{Z^{\prime}/T}^1 \rightarrow \Omega_{Z/T}^1 \rightarrow \rightarrow \Omega_{Z/Z^{\prime}}^1 \rightarrow 0. $$
Since $Z \rightarrow T$ is smooth, $H^{-1} \left( {NL}_{Z/T} \right) = 0$.
Because both $\left(Z, \Omega_{Z/T}^1 \right) \rightarrow \left(X, \mathscr{F} \right)$ and $\left(Z^{\prime}, \Omega_{Z^{\prime}/T}^1 \right) \rightarrow \left(X, \mathscr{F} \right)$ are weakly \'{e}tale morphisms of directed spaces, $\gamma^* \Omega_{Z^{\prime}/T}^1 \rightarrow \Omega_{Z/T}^1$ is an isomorphism.
It follows that both $\Omega_{Z/Z^{\prime}}^1  = 0$ and $H^{-1} \left( {NL}_{Z/Z^{\prime}} \right) = 0$.
In other words, the na\"{i}ve cotangent complex of $\gamma$ is exact.
Furthermore $\gamma$ is locally of finite presentation (\cite[\href{https://stacks.math.columbia.edu/tag/02FV}{Lemma 02FV}]{stacks-project}) so, by \cite[\href{https://stacks.math.columbia.edu/tag/0G7Z}{Lemma 0G7Z}]{stacks-project}, it is \'{e}tale, hence flat.
$\gamma$ is also a monomorphism (\cite[\href{https://stacks.math.columbia.edu/tag/01L7}{Lemma 01L7}]{stacks-project}), therefore \cite[\href{https://stacks.math.columbia.edu/tag/025G}{Theorem 025G}]{stacks-project} implies that $\gamma$ is an open immersion.
It follows that $\gamma$ is an open and closed immersion. \par

Now, the fibres of $Z \rightarrow T$ are non-empty (since $Z \rightarrow T$ is surjective) connected components of the fibres of $Z^{\prime} \rightarrow T$.
As these are geometrically integral, $\gamma$ induces an isomorphism between the fibres of $Z \rightarrow T$ and the fibres of $Z^{\prime} \rightarrow T$. 
This shows that $\gamma$ is surjective.
As remarked before, $\gamma$ is also flat closed immersion hence, by \cite[\href{https://stacks.math.columbia.edu/tag/04PW}{Lemma 04PW}]{stacks-project}, it is an isomorphism.
\end{proof}

\begin{lemma}[Functor of smooth geometric leaves]
\label{lem:functor_geometric_leaves}
Let $S$ be a scheme and let $(X, \mathscr{F})$ be a directed space over $S$.
Suppose that there exists an $S$-morphism $\pi : X \rightarrow Y$ which is $\mathscr{F}$-invariant and has the following property: for all geometric points $\mathrm{Spec}\, L \rightarrow Y$, where $L$ is an algebraically closed field, the fibre product $X \times_Y \mathrm{Spec}\,L$ is a smooth algebraic leaf of $\mathscr{F}$ over $\mathrm{Spec}\,L$.
Then $\pi$ represents geometric points of the functor of smooth algebraic leaves. 
\end{lemma}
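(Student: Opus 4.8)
The plan is to fix a geometric point $\mathrm{Spec}\,L \to S$ with $L$ algebraically closed and to prove that the map
$$ \alpha_L \colon \mathrm{Hom}_S(\mathrm{Spec}\,L, Y) \longrightarrow F(\mathrm{Spec}\,L), \qquad y \longmapsto X \times_{Y,y} \mathrm{Spec}\,L, $$
sending $y$ to the fibre product equipped with the closed immersion into $X \times_S \mathrm{Spec}\,L$ induced by the two projections, is a bijection which is natural in the geometric point. Well-definedness of $\alpha_L$ is exactly the hypothesis that each such fibre product is a smooth algebraic leaf of $\mathscr{F}$ over $\mathrm{Spec}\,L$, so the work is to check injectivity, surjectivity and naturality.

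For injectivity I would take $y_1, y_2$ with $\alpha_L(y_1) = \alpha_L(y_2)$, call the common closed subscheme $Z \hookrightarrow X \times_S \mathrm{Spec}\,L$ and write $p \colon Z \to X$, $q \colon Z \to \mathrm{Spec}\,L$ for the two induced projections. The defining property of the fibre product gives $\pi \circ p = y_i \circ q$ for $i = 1, 2$; since $q$ is an epimorphism — being smooth and surjective onto a point, exactly as observed in the proof of Proposition \ref{prop:decomposition} — this forces $y_1 = y_2$. For surjectivity I would start from a smooth algebraic leaf $\iota \colon Z \hookrightarrow X \times_S \mathrm{Spec}\,L$ of $\mathscr{F}$, with projections $p, q$ as above; since such a leaf is in particular a smooth subvariety tangent to $\mathscr{F}$, Proposition \ref{prop:decomposition} applied to the $\mathscr{F}$-invariant morphism $\pi$ produces a morphism $y \colon \mathrm{Spec}\,L \to Y$ with $\pi \circ p = y \circ q$. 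Setting $Z' = \alpha_L(y)$ with its closed immersion $\iota' \colon Z' \hookrightarrow X \times_S \mathrm{Spec}\,L$, the universal property of the fibre product turns $p$ and $q$ into a $\mathrm{Spec}\,L$-morphism $\gamma \colon Z \to Z'$ satisfying $\iota' \circ \gamma = \iota$. As $Z$ and $Z'$ are both smooth algebraic leaves over $\mathrm{Spec}\,L$ and $\iota' \circ \gamma = \iota$, Lemma \ref{lem:unique_algebraic_leaf} shows $\gamma$ is an isomorphism; combined with $\iota' \circ \gamma = \iota$ this forces $Z$ and $Z'$ to be the same closed subscheme of $X \times_S \mathrm{Spec}\,L$, so the class of $Z$ in $F(\mathrm{Spec}\,L)$ equals $\alpha_L(y)$.

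Finally, naturality is a compatibility check: for a morphism $\phi \colon \mathrm{Spec}\,L' \to \mathrm{Spec}\,L$ over $S$, pullback of families along $\phi$ is well-defined by Lemma \ref{lem:base_change_algebraic_leaves}, and both $\alpha_{L'}$ precomposed with $\phi$ and $\alpha_L$ followed by this pullback send $y$ to $X \times_{Y, y \circ \phi} \mathrm{Spec}\,L'$, identified canonically via transitivity of fibre products. I expect surjectivity to be the only delicate step: the point is not merely to produce $y$ from Proposition \ref{prop:decomposition} but to check that $X \times_{Y,y} \mathrm{Spec}\,L$ recovers the given leaf \emph{on the nose} as a subscheme, rather than merely up to isomorphism, since $F(\mathrm{Spec}\,L)$ consists of honest closed subschemes of $X \times_S \mathrm{Spec}\,L$; this is precisely what the identity $\iota' \circ \gamma = \iota$ together with Lemma \ref{lem:unique_algebraic_leaf} delivers.
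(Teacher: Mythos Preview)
Your proposal is correct and follows essentially the same approach as the paper: define $\alpha$ as the fibre-product map, use Proposition~\ref{prop:decomposition} to produce the candidate inverse, and invoke Lemma~\ref{lem:unique_algebraic_leaf} to show the recovered leaf coincides with the original. Your treatment is in fact more careful than the paper's on two points: you explicitly address naturality in the geometric point, and you are explicit that $\iota' \circ \gamma = \iota$ forces equality of closed subschemes rather than mere abstract isomorphism (the paper's proof also has a typo, citing itself where it means Lemma~\ref{lem:unique_algebraic_leaf}).
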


\begin{proof}
Let $\alpha$ be the map of sets
$$ \mathrm{Hom} (\mathrm{Spec}\,L, Y) \xrightarrow{\alpha} F(\mathrm{Spec}\,L) $$ 
which takes a morphism $\mathrm{Spec}\,L \rightarrow Y$ and returns $X \times_Y \mathrm{Spec}\,L$, a smooth algebraic leaf of $\mathscr{F}$ over $\mathrm{Spec}\,L$.
Define an inverse map $\beta$ which takes a smooth algebraic leaf $Z$ of $\mathscr{F}$ over $\mathrm{Spec}\,L$ and returns the unique morphism $\mathrm{Spec}\,L \rightarrow Y$ given by Proposition \ref{prop:decomposition}. 
It is shown that $\alpha$ and $\beta$ are inverse to each other.
By uniqueness of the morphism $\mathrm{Spec}\,L \rightarrow Y$, it is clear that $\beta \circ \alpha = \mathds{1}$.
Conversely let $Z$ be a smooth algebraic leaf of $\mathscr{F}$ over $\mathrm{Spec}\,L$ and let $Z^{\prime} = \alpha \left( \beta (Z) \right)$.
Then $Z^{\prime}$ is a smooth algebraic leaf of $\mathscr{F}$ over $T$ admitting a $\mathrm{Spec}\,L$-factorisation $Z \rightarrow Z^{\prime} \rightarrow X \times_S \mathrm{Spec}\,L$.
By Lemma \ref{lem:functor_geometric_leaves}, $Z = Z^{\prime}$.
\end{proof}

\section{Quotient Spaces}
\label{sec:quotient_spaces}

This section introduces two different notions of quotients and relates them. 
\S \ref{subsec:categorical_quotients} introduces categorical quotients.
\S \ref{subsec:geometric_quotient} introduces geometric quotients  and shows that geometric quotients are categorical.
\S \ref{subsec:glueing} shows that geometric quotients can be glued together.

\subsection{Categorical Quotients}
\label{subsec:categorical_quotients}

This subsection defines categorical quotients.

\begin{definition}[Categorical quotients]
\label{def:categorical_quotient}
Let $S$ be a scheme and let $(X, \mathscr{F})$ be a directed space over $S$.
A \emph{categorical quotient} of $X$ by $\mathscr{F}$ (or simply \emph{categorical quotient}) is an $S$-morphism $\pi : X \rightarrow Y$ such that
\begin{enumerate}
\item $\pi$ is $\mathscr{F}$-invariant.
\item Any $\mathscr{F}$-invariant $S$-morphism uniquely factors through $\pi$.
That is, if ${\pi^{\prime}} : X \rightarrow {Y^{\prime}}$ is an $\mathscr{F}$-invariant $S$-morphism, then there exists a unique $S$-morphism $f : Y \rightarrow {Y^{\prime}}$ such that ${\pi^{\prime}} = f \circ \pi$.
\end{enumerate}
\end{definition}

\begin{lemma}[Uniqueness of categorical quotients]
\label{lem:unique_categorical_quotient}
Let $S$ be a scheme and let $(X, \mathscr{F})$ be a foliated space over $S$. If a categorical quotient $\pi : X \rightarrow Y$ exists, then it is unique.
\end{lemma}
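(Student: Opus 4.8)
The plan is to run the standard "uniqueness of a universal object" argument driven purely by the universal property in Definition \ref{def:categorical_quotient}. Suppose $\pi : X \to Y$ and $\pi' : X \to Y'$ are both categorical quotients of $X$ by $\mathscr{F}$. Since $\pi'$ is $\mathscr{F}$-invariant and $\pi$ is a categorical quotient, there is a unique $S$-morphism $f : Y \to Y'$ with $\pi' = f \circ \pi$. Symmetrically, since $\pi$ is $\mathscr{F}$-invariant and $\pi'$ is a categorical quotient, there is a unique $S$-morphism $g : Y' \to Y$ with $\pi = g \circ \pi'$.

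Next I would show $g \circ f = \mathrm{id}_Y$ and $f \circ g = \mathrm{id}_{Y'}$. Compute $(g \circ f) \circ \pi = g \circ (f \circ \pi) = g \circ \pi' = \pi$, so $g \circ f$ is an $S$-morphism $Y \to Y$ through which $\pi$ factors as $\pi = (g\circ f)\circ \pi$. But $\mathrm{id}_Y$ also satisfies $\pi = \mathrm{id}_Y \circ \pi$. Here is the one point that genuinely uses a hypothesis: $\pi$ itself is $\mathscr{F}$-invariant (property (1) of being a categorical quotient), so the uniqueness clause in property (2), applied to the $\mathscr{F}$-invariant morphism $\pi' := \pi$, forces $g \circ f = \mathrm{id}_Y$. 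The same argument with the roles of $\pi$ and $\pi'$ interchanged gives $f \circ g = \mathrm{id}_{Y'}$.

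Hence $f : Y \to Y'$ is an isomorphism of $S$-schemes with inverse $g$, and it is compatible with the quotient maps in the sense that $\pi' = f \circ \pi$. This is exactly the assertion that the categorical quotient, if it exists, is unique up to unique isomorphism, so the lemma follows. There is no real obstacle: the argument is entirely formal, and the only thing one must be careful about is to invoke $\mathscr{F}$-invariance of $\pi$ and $\pi'$ themselves when applying the uniqueness part of the factorisation property — without that, one only gets an isomorphism of the underlying objects rather than a canonical one compatible with the projections.
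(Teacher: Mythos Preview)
Your argument is correct and is exactly the standard ``uniqueness of universal objects'' reasoning that the paper has in mind; the paper's own proof consists of the single word ``Clear.'' You have simply unwound that word into the full details, so there is nothing to add.
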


\begin{proof}
Clear.
\end{proof}

\subsection{Geometric Quotients}
\label{subsec:geometric_quotient}

This subsection introduces geometric quotients and proves that geometric quotients are categorical quotients. The proof is essentially identical to \cite[Chapter 0, \S 2, Proposition 0.1]{MR1304906}.

\begin{definition}[Geometric quotients]
\label{def:geom_quotients}
Let $S$ be a scheme and let $(X, \mathscr{F})$ be a directed space over $S$.
A \emph{geometric quotient} of $X$ by $\mathscr{F}$ (or simply \emph{geometric quotient}) is an $S$-morphism $\pi : X \rightarrow Y$ to an $S$-scheme $Y$ such that
\begin{enumerate}[label=(\arabic*)]
\item \label{geometric:invariant} It is $\mathscr{F}$-invariant.
\item \label{geometric:functor} It represents geometric points of the functor of smooth algebraic leaves.
\item \label{geometric:open} It is universally open.
\item \label{geometric:exact} The sequence of abelian sheaves on $X$
$$ 0 \rightarrow \pi^{-1}\mathscr{O}_Y \rightarrow \mathscr{O}_X \rightarrow \mathscr{F} $$
is exact.
\end{enumerate}
\end{definition}

\begin{proposition}[Geometric quotients are categorical]
\label{prop:geometric_categorical_quotient}
Let $S$ be a scheme and let $(X, \mathscr{F})$ be a directed space over $S$. Suppose that a geometric quotient $\pi : X \rightarrow Y$ exists. Then it is a categorical quotient.
\end{proposition}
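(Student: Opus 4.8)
The plan is to mimic the classical GIT argument (Mumford, Chapter 0, §2, Proposition 0.1), adapted to the foliated setting. Given a geometric quotient $\pi : X \to Y$, I must show that any $\mathscr{F}$-invariant $S$-morphism $\pi' : X \to Y'$ factors uniquely through $\pi$. The first step is to establish \emph{uniqueness} of the factorisation: since $\pi$ is universally open and surjective (surjectivity follows from the fact that $\pi$ represents geometric points of the functor of smooth algebraic leaves, so every geometric point of $Y$ has a non-empty fibre), $\pi$ is an epimorphism in the category of schemes — in fact a topological quotient map with the structure sheaf injecting via property \ref{geometric:exact}. Hence if $f_1 \circ \pi = f_2 \circ \pi = \pi'$ then $f_1 = f_2$.

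For \emph{existence}, I would argue locally on $Y$ and then glue. Cover $Y'$ by affine opens; since the question is local on the target $Y'$, and also (via open-ness of $\pi$) amenable to working over opens of $Y$, reduce to constructing, for each affine open $V' \subseteq Y'$, a morphism on the corresponding piece of $Y$. Concretely: first define the underlying continuous map $f : Y \to Y'$ on topological spaces. A point $y \in Y$ corresponds to a geometric fibre which is a smooth algebraic leaf $Z$ of $\mathscr{F}$; by Proposition \ref{prop:decomposition}, the $\mathscr{F}$-invariant morphism $\pi'$ sends $Z$ into a single point of $Y'$, giving a well-defined set map $f$. Continuity and openness considerations: since $\pi$ is a quotient map topologically (it is surjective, continuous, and open, being universally open), a subset $W \subseteq Y'$ is open iff $\pi^{-1}(f^{-1}(W)) = {\pi'}^{-1}(W)$ is open in $X$, which it is; so $f$ is continuous.

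Next, construct the morphism of structure sheaves $f^\# : \mathscr{O}_{Y'} \to f_* \mathscr{O}_Y$. The key input is property \ref{geometric:exact}, which identifies $\pi^{-1}\mathscr{O}_Y$ with the kernel of $\mathscr{O}_X \to \mathscr{F}$, i.e.\ with the sheaf of first integrals. Given a local section $s$ of $\mathscr{O}_{Y'}$ over an open $V' \subseteq Y'$, its pullback ${\pi'}^\# s$ is a section of $\mathscr{O}_X$ over ${\pi'}^{-1}(V') = \pi^{-1}(f^{-1}(V'))$, and since $\pi'$ is $\mathscr{F}$-invariant, Lemma \ref{lem:invariant_complex}(1) tells us this section dies in $\mathscr{F}$, hence lies in $\pi^{-1}\mathscr{O}_Y$ over that open set. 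Now I need to descend this to an actual section of $\mathscr{O}_Y$ over $f^{-1}(V')$. This is where I invoke that $\pi$, being universally open with the property that $\mathscr{O}_Y \hookrightarrow \pi_*\mathscr{O}_X$ is an isomorphism onto the first integrals — more precisely, I want $\mathscr{O}_Y(U) \xrightarrow{\sim} \Gamma(\pi^{-1}(U), \pi^{-1}\mathscr{O}_Y)$ via the natural map, which should follow from $\pi$ being open and surjective with connected (indeed geometrically integral) fibres, exactly as in the proof of Lemma \ref{lem:invariant_complex}(2) where the identity $f(U_1 \cap U_2) = f(U_1) \cap f(U_2)$ and surjectivity give that $\pi^{-1}\mathscr{O}_Y$ is the sheaf $U \mapsto \mathscr{O}_Y(\pi(U))$ and $\mathscr{O}_Y \xrightarrow{\sim} \pi_*\pi^{-1}\mathscr{O}_Y$. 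Combining these, ${\pi'}^\# s$ corresponds to a unique section of $\mathscr{O}_Y$ over $f^{-1}(V')$, defining $f^\#$; compatibility with restrictions is automatic, and one checks the resulting $(f, f^\#)$ is a morphism of $S$-schemes with $\pi' = f \circ \pi$ by construction.

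The main obstacle I anticipate is the sheaf-theoretic bookkeeping in identifying $\mathscr{O}_Y$ with the pushforward of the first-integral sheaf along $\pi$ — i.e.\ showing the natural map $\mathscr{O}_Y \to \pi_* \pi^{-1}\mathscr{O}_Y$ is an isomorphism and hence that invariant functions on $X$ descend. This requires $\pi$ to be open, surjective, and to have irreducible (connected) fibres, all of which are available: openness from \ref{geometric:open}, surjectivity and connectedness of fibres from \ref{geometric:functor} (geometric fibres are smooth algebraic leaves, hence geometrically integral, hence connected). The argument is then precisely the one already carried out inside the proof of Lemma \ref{lem:invariant_complex}(2), so I would either cite it or repeat the short topological lemma $f(U_1 \cap U_2) = f(U_1) \cap f(U_2)$. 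Everything else — uniqueness, continuity of $f$, the factorisation identity — is formal once this identification is in hand.
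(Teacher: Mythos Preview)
Your proposal is correct and follows essentially the same approach as the paper's proof: define the set-map $f$ via Proposition~\ref{prop:decomposition} applied to the geometric fibres (which are smooth algebraic leaves by Property~\ref{geometric:functor}), establish continuity using openness of $\pi$, and construct $f^{\#}$ by showing that sections pulled back along $\pi'$ land in the kernel of $\mathscr{O}_X \to \mathscr{F}$ (Lemma~\ref{lem:invariant_complex}(1)) and then descend via the identification $\mathscr{O}_Y \xrightarrow{\sim} \pi_*\pi^{-1}\mathscr{O}_Y$, which is exactly Lemma~\ref{lem:invariant_complex}(2). The paper phrases the last step as invoking the universal property of the kernel rather than explicitly naming the adjunction isomorphism, but the content is identical, and you have correctly identified all the ingredients (openness, surjectivity, irreducibility of fibres) needed to make Lemma~\ref{lem:invariant_complex}(2) apply.
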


\begin{proof}
By \ref{geometric:invariant}, $\pi : X \rightarrow Y$ is $\mathscr{F}$-invariant. \par

Now suppose that ${\pi^{\prime}} : X \rightarrow Y^{\prime}$ is an $\mathscr{F}$-invariant $S$-morphism.
Define $f : Y \rightarrow Y^{\prime}$ as a map of sets as follows: for $y \in Y$, let $x \in X_y = X \times_Y \mathrm{Spec}\,\kappa(y)$ and let $f(y) = \pi(x)$. 
It is shown that this map is well-defined and unique. 
Let $X_L = X \times_Y \mathrm{Spec}\,L$ where $L$ is the algebraic closure of $\kappa(y)$. 
By Property \ref{geometric:functor}, $X_L$ is a smooth algebraic leaf of $\mathscr{F}$ over $\mathrm{Spec}\,L$.
But by Proposition \ref{prop:decomposition}, the image of the composition
$$ X_L \rightarrow X \rightarrow Y^{\prime} $$
consists of a single point: ${\pi^{\prime}}(x)$.
This shows that $f$ is well-defined.
It is clear that $f$ is unique since $\pi$ is an epimorphism in the category of topological spaces. \par

Next, it is shown that $f$ is continuous. If $V \subseteq Y^{\prime}$ is an open subset, it is easy to see that
$$ f^{-1}(V) = \pi\left( {\pi^{\prime}}^{-1}(V) \right).$$
This is open by Property \ref{geometric:open}. \par

Finally, it is shown that $f$ admits a unique scheme-theoretic structure.
Let $V \subseteq Y^{\prime}$ be an open set, a pullback map
$$ \mathscr{O}_{Y^{\prime}}(V) \rightarrow \mathscr{O}_Y \left(f^{-1}(V) \right) $$
is constructed.
Because ${\pi^{\prime}}$ is $\mathscr{F}$-invariant, by Lemma \ref{lem:invariant_complex}.(1), the sequence
$$ 0 \rightarrow \mathscr{O}_{Y^{\prime}}(V) \rightarrow \mathscr{O}_X \left({\pi^{\prime}}^{-1}(V) \right) \rightarrow \mathscr{F} \left({\pi^{\prime}}^{-1}(V) \right)$$
is a complex of abelian groups.
Hence the sequence
$$ 0 \rightarrow \mathscr{O}_{Y^{\prime}}(V) \rightarrow \mathscr{O}_X \left(\pi^{-1}f^{-1}(V) \right) \rightarrow \mathscr{F} \left(\pi^{-1}f^{-1}(V) \right)$$
is a complex of abelian groups.
But by Property \ref{geometric:functor}, $f$ has irreducible fibres and by Property \ref{geometric:open}, it is an open morphism.
Hence Property \ref{geometric:exact} and Lemma \ref{lem:invariant_complex}.(2) imply that
$$ \mathscr{O}_Y \left(f^{-1}(V)\right) = \mathrm{ker} \left( \mathscr{O}_X \left(\pi^{-1}f^{-1}(V) \right) \rightarrow \mathscr{F} \left(\pi^{-1}f^{-1}(V) \right) \right).$$
By the universal property of the kernel, there exists a unique morphism of abelian groups
\begin{align}
\label{eq:scheme_structure}
\mathscr{O}_{Y^{\prime}}(V) \rightarrow \mathscr{O}_Y \left(f^{-1}(V) \right).
\end{align}
By Lemma \ref{lem:morphism_groups_rings}, this is a morphism of rings.
\end{proof}

\begin{lemma}[Morphism of rings]
\label{lem:morphism_groups_rings}
Let $A$, $B$ and $C$ be rings.
Suppose that there are morphism of abelian groups $C\rightarrow A$ and $A \rightarrow B$ such that $A \rightarrow B$ is an injective morphism of rings and the composition $C \rightarrow B$ is a morphism of rings.
Then $C \rightarrow A$ is a morphism of rings.
\end{lemma}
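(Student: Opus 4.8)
Denote the map of abelian groups $C \to A$ by $\phi$ and the injective ring homomorphism $A \to B$ by $\psi$, so that by hypothesis $\psi \circ \phi : C \to B$ is a ring homomorphism. Since $\phi$ is already additive, the plan is simply to check that $\phi$ respects the multiplicative structure, and for this the only tool needed is the injectivity of $\psi$, which lets one cancel $\psi$ from both sides of any identity it is applied to.

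First I would handle the unit. One has $\psi(\phi(1_C)) = (\psi \circ \phi)(1_C) = 1_B = \psi(1_A)$, using that $\psi \circ \phi$ and $\psi$ are ring homomorphisms. Injectivity of $\psi$ then forces $\phi(1_C) = 1_A$.

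Next, for multiplicativity, fix $x, y \in C$. Then
$$
\psi\bigl(\phi(xy)\bigr) = (\psi \circ \phi)(xy) = (\psi \circ \phi)(x)\,(\psi \circ \phi)(y) = \psi\bigl(\phi(x)\bigr)\,\psi\bigl(\phi(y)\bigr) = \psi\bigl(\phi(x)\,\phi(y)\bigr),
$$
where the second equality uses that $\psi \circ \phi$ is a ring homomorphism and the last uses that $\psi$ is. Again injectivity of $\psi$ gives $\phi(xy) = \phi(x)\,\phi(y)$. Together with additivity and the unit computation, this shows $\phi$ is a ring homomorphism.

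There is no real obstacle here: the statement is a one-line diagram chase whose entire content is that a ring homomorphism into a ring that injects into $B$ is determined, as a ring map, by its composite to $B$. The only point worth a moment's care is to make sure each equality above is justified by the specific hypothesis available (that $\psi$ and $\psi\circ\phi$, but not a priori $\phi$, are ring maps), and that injectivity is invoked only to cancel $\psi$, never $\phi$.
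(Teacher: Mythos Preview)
Your proof is correct and is precisely the routine verification the paper has in mind; the paper's own proof consists of the single word ``Clear.'' You have simply written out the obvious diagram chase that the author deemed unnecessary to include.
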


\begin{proof}
Clear.
\end{proof}

\subsection{Glueing Geometric Quotients}
\label{subsec:glueing}

This subsection shows that it is enough to construct a geometric quotient locally. The irreducibility of the fibres is key in the arguments.

\begin{lemma}[Restriction of geometric quotients I]
\label{lem:open_geometric_quotient}
Let $S$ be a scheme and let $(X, \mathscr{F})$ be a directed space over $S$.
Suppose that a geometric quotient $\pi : X \rightarrow Y$ for $\mathscr{F}$ exists.
Let $\iota_U : U \rightarrow X$ be an open immersion and let $V \subseteq Y$ denote the set-theoretic image of $U$ by $\pi \circ \iota_U$.
Then $\iota_V : V \rightarrow Y$ is an open subset and $\pi|_U : U \rightarrow V$ is a geometric quotient of $(U, \iota_U^{\#}\mathscr{F})$.
\end{lemma}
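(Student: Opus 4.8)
The plan is to verify the four defining properties of a geometric quotient (Definition~\ref{def:geom_quotients}) for $\pi|_U : U \to V$, after first observing that $V$ is open in $Y$. That observation is immediate: $\pi$ is universally open, hence open, so $V = \pi(\iota_U(U))$ is open and $\iota_V : V \hookrightarrow Y$ is an open immersion. Throughout I would use that an open immersion is \'{e}tale (in particular weakly \'{e}tale), so that Lemma~\ref{lem:etale_pullback} gives $\iota_U^{\#}\mathscr{F} = \iota_U^{*}\mathscr{F} = \mathscr{F}|_U$, and that the relative cotangent sequences for the \'{e}tale morphisms $\iota_U$ and $\iota_V$ yield canonical identifications $\Omega_{U/S}^1 = \iota_U^{*}\Omega_{X/S}^1$ and $\Omega_{U/V}^1 = \Omega_{U/Y}^1 = \iota_U^{*}\Omega_{X/Y}^1$.

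First I would dispose of Properties~\ref{geometric:invariant}, \ref{geometric:open} and \ref{geometric:exact}, which are all formal. For \ref{geometric:invariant}: $\pi$ is $\mathscr{F}$-invariant, so it is universally separable (this is inherited by $\pi|_U$, residue fields being unchanged under restriction to an open subset) and admits a factorisation $\Omega_{X/S}^1 \to \Omega_{X/Y}^1 \to \mathscr{F}$; applying $\iota_U^{*}$ and using the identifications above produces $\Omega_{U/S}^1 \to \Omega_{U/V}^1 \to \iota_U^{\#}\mathscr{F}$ with the correct composite, so $\pi|_U$ is $\iota_U^{\#}\mathscr{F}$-invariant. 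For \ref{geometric:open}: any $V^{\prime} \to V$ factors through $Y$, hence $U \times_V V^{\prime} = U \times_Y V^{\prime}$, which is an open subscheme of $X \times_Y V^{\prime}$ (base change of $\iota_U$); composing this open immersion with $X \times_Y V^{\prime} \to V^{\prime}$, which is open because $\pi$ is universally open, shows $\pi|_U$ is universally open. For \ref{geometric:exact}: restrict the exact sequence of abelian sheaves $0 \to \pi^{-1}\mathscr{O}_Y \to \mathscr{O}_X \to \mathscr{F}$ to the open subset $U$ (exactness is preserved), and identify $\mathscr{F}|_U = \iota_U^{\#}\mathscr{F}$ together with $(\pi^{-1}\mathscr{O}_Y)|_U = (\pi \circ \iota_U)^{-1}\mathscr{O}_Y = (\pi|_U)^{-1}\iota_V^{-1}\mathscr{O}_Y = (\pi|_U)^{-1}\mathscr{O}_V$.

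The substantive step is Property~\ref{geometric:functor}, and here I would invoke Lemma~\ref{lem:functor_geometric_leaves}: since $\pi|_U$ is already known to be $\iota_U^{\#}\mathscr{F}$-invariant, it remains to show that for every geometric point $\mathrm{Spec}\,L \to V$, with $L$ algebraically closed, the fibre product $U \times_V \mathrm{Spec}\,L$ is a smooth algebraic leaf of $\iota_U^{\#}\mathscr{F}$ over $\mathrm{Spec}\,L$. Composing with $\iota_V$ gives a geometric point $\mathrm{Spec}\,L \to Y$, and since $\pi$ represents geometric points of the functor of smooth algebraic leaves, $Z := X \times_Y \mathrm{Spec}\,L$ is a smooth algebraic leaf of $\mathscr{F}$ over $\mathrm{Spec}\,L$. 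Because $V$ is open in $Y$ one has $U \times_V \mathrm{Spec}\,L = U \times_Y \mathrm{Spec}\,L = Z \times_X U$. The key point is that this is non-empty: the geometric point lands on some $v \in V = \pi(U)$, so $\pi|_U$ has a non-empty fibre over $v$, of which $Z \times_X U$ is a base change to $L$; thus $Z \times_X U \to \mathrm{Spec}\,L$ is surjective. Lemma~\ref{lem:algebraic_leaves_open_subsets}, applied to the open immersion $\iota_U$ and the leaf $Z$, then shows that $Z \times_X U$ is a smooth algebraic leaf of $\iota_U^{\#}\mathscr{F}$ over $\mathrm{Spec}\,L$, which is exactly the hypothesis Lemma~\ref{lem:functor_geometric_leaves} requires.

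I expect the only place demanding care to be Property~\ref{geometric:functor}: one must be sure that the property of representing geometric points genuinely delivers that each geometric fibre of $\pi$ is a smooth algebraic leaf, and one must check the surjectivity hypothesis of Lemma~\ref{lem:algebraic_leaves_open_subsets} — which is precisely the point at which it matters that $V$ is the full image $\pi(U)$ rather than a smaller open subset. Everything else is bookkeeping with pullbacks and restrictions along the \'{e}tale open immersions $\iota_U$ and $\iota_V$.
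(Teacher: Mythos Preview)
Your proposal is correct and follows essentially the same route as the paper: openness of $V$ from Property~\ref{geometric:open}, Properties~\ref{geometric:invariant} and~\ref{geometric:exact} by restriction along $\iota_U$ (the paper phrases \ref{geometric:invariant} via Lemma~\ref{lem:directed_stalk_local} rather than pulling back the factorisation, but this is the same content), Property~\ref{geometric:open} by a direct base-change argument (the paper simply says ``clearly preserved''), and Property~\ref{geometric:functor} via Lemma~\ref{lem:functor_geometric_leaves} combined with Lemma~\ref{lem:algebraic_leaves_open_subsets}, using non-emptiness of the fibre to secure the surjectivity hypothesis. Your identification of the surjectivity check in Lemma~\ref{lem:algebraic_leaves_open_subsets} as the one non-formal point is exactly right.
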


\begin{proof}
Note that $\iota_V : V \subseteq Y$ is open because $\pi$ is open (Property \ref{geometric:open}). \par

Property \ref{geometric:invariant} is preserved since being a morphism of directed spaces is a local property (Lemma \ref{lem:directed_stalk_local}). \par

Property \ref{geometric:functor} is preserved.
Let $L$ be an algebraically closed field and let $\mathrm{Spec}\,L \rightarrow V$ be a morphism.
By Lemma \ref{lem:functor_geometric_leaves}, it is enough to show that $Z = U \times_V \mathrm{Spec}\,L$ is a smooth algebraic leaf of $\iota_U^{\#} \mathscr{F}$ over $L$.
Note that, since $V \subseteq Y$ is an open immersion, $Z = U \times_Y \mathrm{Spec}\,L$.
Since $Z$ is, by construction, non-empty, the hypothesis of Lemma \ref{lem:algebraic_leaves_open_subsets} are satisifed and the claim follows. \par

Property \ref{geometric:open} is clearly preserved. \par

Property \ref{geometric:exact} is also preserved. Indeed, pulling back the exact sequence by the exact functor $\iota_U^{-1}$ yields
$$ 0 \rightarrow \iota_U^{-1} \pi^{-1}\mathscr{O}_Y \rightarrow \iota_U^{-1} \mathscr{O}_X \rightarrow \iota_U^{-1} \mathscr{F}. $$
But $\iota_U^{-1} \circ \pi^{-1} = \pi_U^{-1} \circ \iota_V^{-1}$. Further, because $\iota_U$ and $\iota_V$ are open immersions, $\iota_U^{-1} = \iota_U^*$ and $\iota_V^{-1} = \iota_V^*$. This gives exactness of
$$ 0 \rightarrow \pi_U^{-1} \mathscr{O}_V \rightarrow \mathscr{O}_U \rightarrow \iota_U^{*} \mathscr{F}. $$
Finally, by Lemma \ref{lem:etale_pullback}, $\iota_U^{*} \mathscr{F} = \iota_U^{\#} \mathscr{F}$.
\end{proof}

\begin{lemma}[Restriction of geometric quotients II]
\label{lem:intersection_geometric_quotient}
Let $S$ be a scheme and let $(X, \mathscr{F})$ be a directed space over $S$.
Suppose that a geometric quotient $\pi : X \rightarrow Y$ for $\mathscr{F}$ exists.
Let
\begin{center}
\begin{tikzcd}
U_1 \cap U_2 \arrow[r] \arrow[d] & U_1 \arrow[d] \\
U_2 \arrow[r] & X.
\end{tikzcd}
\end{center}
be a Cartesian diagram of open immersions.
Then there exists a commutative diagram
\begin{center}
\begin{tikzcd}[row sep=scriptsize, column sep=scriptsize]
& U_1 \arrow[rr] \arrow[dd] & & X \arrow[dd] \\
U_1 \cap U_2 \arrow[ur] \arrow[rr, crossing over] \arrow[dd] & & U_2 \arrow[ur] \\
& V_1 \arrow[rr] & & Y \\
V_{12} \arrow[ur] \arrow[rr] & & V_2 \arrow[from=uu, crossing over] \arrow[ur]
\end{tikzcd}
\end{center}
where the vertical arrows are geometric quotients and the bottom face of the cube is Cartesian.
\end{lemma}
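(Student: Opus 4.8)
The plan is to build the cube by repeated use of Lemma \ref{lem:open_geometric_quotient}, and to reduce the only non-formal assertion --- that the bottom face is Cartesian --- to the irreducibility of the fibres of $\pi$ recorded in Property \ref{geometric:functor}.

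First I would apply Lemma \ref{lem:open_geometric_quotient} to the open immersion $U_i \hookrightarrow X$ for $i = 1,2$, obtaining open subsets $V_i \subseteq Y$ (the set-theoretic image of $U_i$ under $\pi$) together with geometric quotients $\pi_i = \pi|_{U_i} : U_i \to V_i$ of $(U_i, \iota_{U_i}^{\#}\mathscr{F})$. Applying Lemma \ref{lem:open_geometric_quotient} once more, this time to the open immersion $U_1 \cap U_2 \hookrightarrow U_i$ inside the geometric quotient $\pi_i$ (and using Lemma \ref{lem:pullback_functorial} to identify the resulting pulled-back foliation with $\iota_{U_1 \cap U_2}^{\#}\mathscr{F}$ independently of $i$), I get that the set-theoretic image $V_{12}$ of $U_1 \cap U_2$ under $\pi$ is open, and that $\pi_{12} := \pi|_{U_1 \cap U_2} : U_1 \cap U_2 \to V_{12}$ is a geometric quotient, realised simultaneously as a restriction of $\pi_1$ and of $\pi_2$. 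Since every horizontal arrow in the proposed cube is an open immersion and every vertical arrow is a restriction of $\pi$, commutativity of all the side faces is then immediate, and the top face is the given Cartesian square. It remains to produce the bottom face and to verify it is Cartesian.

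The crux is the identity $V_{12} = V_1 \cap V_2$. The inclusion $V_{12} \subseteq V_1 \cap V_2$ is clear since $U_1 \cap U_2 \subseteq U_1, U_2$. For the reverse inclusion I would mirror the argument already used in the proof of Lemma \ref{lem:invariant_complex}.(2): given $y \in V_1 \cap V_2$, choose $x_1 \in U_1$ and $x_2 \in U_2$ with $\pi(x_1) = \pi(x_2) = y$, and consider the scheme-theoretic fibre $X_y = X \times_Y \mathrm{Spec}\,\kappa(y)$. By Property \ref{geometric:functor}, the base change of $X_y$ to an algebraic closure of $\kappa(y)$ is a smooth algebraic leaf of $\mathscr{F}$, in particular geometrically integral, so $X_y$ is irreducible. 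Then $U_1 \cap X_y \ni x_1$ and $U_2 \cap X_y \ni x_2$ are non-empty open subsets of the irreducible space $X_y$, hence their intersection is non-empty; any point $x \in U_1 \cap U_2 \cap X_y$ satisfies $\pi(x) = y$, so $y \in \pi(U_1 \cap U_2) = V_{12}$. Consequently $V_{12} = V_1 \cap V_2 = V_1 \times_Y V_2$, the last equality because $V_1$ and $V_2$ are open subschemes of $Y$, and the bottom face $V_{12} \to V_1$, $V_{12} \to V_2$, $V_1 \to Y$, $V_2 \to Y$ is Cartesian.

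Assembling everything: the four vertical arrows $\pi, \pi_1, \pi_2, \pi_{12}$ are geometric quotients, all six faces commute (the side faces by transitivity of restriction, the top by hypothesis, the bottom by construction), and both the top and bottom faces are Cartesian. The only step requiring an idea rather than bookkeeping is the reverse inclusion $V_1 \cap V_2 \subseteq V_{12}$, and this is exactly where Property \ref{geometric:functor} --- the geometric integrality, hence irreducibility, of the fibres --- is used; everything else follows formally from Lemma \ref{lem:open_geometric_quotient} and Lemma \ref{lem:pullback_functorial}.
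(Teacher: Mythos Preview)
Your proof is correct and follows essentially the same approach as the paper: repeated application of Lemma~\ref{lem:open_geometric_quotient} to obtain the geometric quotients $U_i\to V_i$ and $U_1\cap U_2\to V_{12}$, followed by the key set-theoretic verification that $V_{12}=V_1\cap V_2$ via the irreducibility of the fibre $X_y$ coming from Property~\ref{geometric:functor}. Your write-up is in fact slightly more careful than the paper's in unpacking why Property~\ref{geometric:functor} yields irreducibility of $X_y$ (passing through the geometric fibre), but the argument is the same.
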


\begin{proof}
Applying Lemma \ref{lem:open_geometric_quotient} gives geometric quotients $U_1 \rightarrow V_1$, $U_2 \rightarrow V_2$ and $U_1 \cap U_2 \rightarrow V_{12}$ together with unique open immersions 
\begin{center}
\begin{tikzcd}
V_{12} \arrow[r] \arrow[d] & V_1 \arrow[d] \\
V_2 \arrow[r] & Y
\end{tikzcd}
\end{center}
forming a commutative diagram.
It is shown that $V_{12} = V_1 \times_Y V_2$.
Because all the morphisms involved in the fibre product are open immersions, it is enough to show that, if $y \in V_1 \cap V_2$, then $y \in V_{12}$.
Let $X_y$ be the fibre of $\pi$ over $y$ and recall that, by Property \ref{geometric:functor} of a geometric quotient, it is irreducible. Consider the open subsets $X_y \cap U_1$ and $X_y \cap U_2$. Because $y \in V_1 = \pi(U_1)$ and $y \in V_2 = \pi(U_2)$, these are non-empty open subsets. By irreducibility, there exists an $x \in X_y \cap U_1 \cap U_2$. But now, $\pi(x) = y$ and $x \in U_1 \cap U_2$ which shows that $y \in V_{12}$.
\end{proof}

\begin{proposition}[Glueing geometric quotients]
\label{prop:glue_geometric_quotient}
Let $S$ be a scheme and let $(X, \mathscr{F})$ be a directed space over $S$.
Let $I$ be an indexing set and let $\{ \iota_i : U_i \rightarrow X \}_{i \in I}$ be an affine open cover of $X$.
Suppose that, for all $i \in I$, a geometric quotient $\pi_i : U_i \rightarrow V_i$ for $\iota_i^{\#} \mathscr{F}$ exists.
Then there exists an $S$ scheme $Y$ and a geometric quotient $\pi : X \rightarrow Y$ for $\mathscr{F}$.
\end{proposition}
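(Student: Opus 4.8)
The plan is to glue the local quotients $\pi_i : U_i \to V_i$ into a single $S$-morphism $\pi : X \to Y$ and then check the four conditions of Definition~\ref{def:geom_quotients} one at a time. First I would produce the glueing data for the $V_i$. Fix $i,j$ and apply Lemma~\ref{lem:open_geometric_quotient} to the geometric quotient $\pi_i : U_i \to V_i$ and the open immersion $U_i \cap U_j \hookrightarrow U_i$: this exhibits a geometric quotient $\pi_i|_{U_i\cap U_j} : U_i \cap U_j \to V_{ij}$ of $(U_i\cap U_j, \mathscr{F}|_{U_i\cap U_j})$ with $V_{ij} \subseteq V_i$ open. Doing the same inside $U_j$ gives an open $V_{ji} \subseteq V_j$ together with a geometric quotient $U_i\cap U_j \to V_{ji}$. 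Both $V_{ij}$ and $V_{ji}$ are geometric quotients of the same directed space, and since a geometric quotient is categorical (Proposition~\ref{prop:geometric_categorical_quotient}) hence unique (Lemma~\ref{lem:unique_categorical_quotient}), there is a canonical isomorphism $\phi_{ij} : V_{ij} \xrightarrow{\ \sim\ } V_{ji}$ compatible with the two quotient maps out of $U_i \cap U_j$, with $\phi_{ii} = \mathrm{id}$ and $\phi_{ji} = \phi_{ij}^{-1}$.

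Next I would verify the cocycle condition on triple overlaps. Applying Lemma~\ref{lem:intersection_geometric_quotient} to $\pi_i : U_i \to V_i$ and the two open immersions $U_i\cap U_j \hookrightarrow U_i$ and $U_i\cap U_k \hookrightarrow U_i$ produces open subschemes $V_{ij},V_{ik} \subseteq V_i$ and an open $V_{ijk}$ which is a geometric quotient of $(U_i\cap U_j\cap U_k, \mathscr{F}|_{U_i\cap U_j\cap U_k})$ and, by the Cartesian bottom face of the cube, equals $V_{ij}\cap V_{ik}$ inside $V_i$. Uniqueness of the geometric quotient of $U_i\cap U_j\cap U_k$ then shows both that $\phi_{ij}$ carries $V_{ijk}$ isomorphically onto $V_{jik}$ and that $\phi_{jk}\circ\phi_{ij} = \phi_{ik}$ there. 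Granting this, the standard glueing of schemes along open subschemes produces an $S$-scheme $Y$ with open immersions $V_i \hookrightarrow Y$ satisfying $V_i\cap V_j = V_{ij}$, and the morphisms $\pi_i$, which agree on overlaps by the previous paragraph, glue to an $S$-morphism $\pi : X \to Y$ with $\pi|_{U_i} = \pi_i$.

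Three of the four conditions transfer immediately, because they are (stalk-)local on $X$; recall $\iota_i^{\#}\mathscr{F} = \iota_i^{*}\mathscr{F} = \mathscr{F}|_{U_i}$ by Lemma~\ref{lem:etale_pullback}. For \ref{geometric:invariant}, being a morphism $(X,\mathscr{F})\to(Y,0)$ of directed spaces is stalk-local (Lemma~\ref{lem:directed_stalk_local}), and at $x\in U_i$ the stalk diagram to be filled in is exactly the one witnessing $\mathscr{F}|_{U_i}$-invariance of $\pi_i$. For \ref{geometric:open}, openness of $\pi$ follows from $\pi(U) = \bigcup_i \pi_i(U\cap U_i)$ with each term open in $V_i$ hence in $Y$; and after any base change $T\to S$ the morphism $\pi\times_S\mathrm{id}_T$ restricts on the cover $\{U_i\times_S T\}$ to $\pi_i\times_S\mathrm{id}_T$, which is open since $\pi_i$ is universally open, so $\pi$ is universally open. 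For \ref{geometric:exact}, the sequence $0\to\pi^{-1}\mathscr{O}_Y\to\mathscr{O}_X\to\mathscr{F}$ restricts on $U_i$ to $0\to\pi_i^{-1}\mathscr{O}_{V_i}\to\mathscr{O}_{U_i}\to\mathscr{F}|_{U_i}$ (using $\iota_{U_i}^{-1}\pi^{-1} = \pi_i^{-1}\iota_{V_i}^{-1}$ and that $\iota_{V_i}$ is an open immersion), which is exact by \ref{geometric:exact} for $\pi_i$.

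The substantive step is \ref{geometric:functor}. By Lemma~\ref{lem:functor_geometric_leaves} it suffices to show that for every algebraically closed field $L$ and every morphism $\mathrm{Spec}\,L\to Y$, with image $y$, the fibre $Z := X\times_Y\mathrm{Spec}\,L$ is a smooth algebraic leaf of $\mathscr{F}$ over $\mathrm{Spec}\,L$. I would write $Z = \bigcup_{i\,:\,y\in V_i}\bigl(U_i\times_{V_i}\mathrm{Spec}\,L\bigr)$, each summand $Z\cap(U_i\times_S\mathrm{Spec}\,L)$ being a smooth algebraic leaf of $\iota_i^{\#}\mathscr{F}$ over $\mathrm{Spec}\,L$ by \ref{geometric:functor} for $\pi_i$ and Lemma~\ref{lem:functor_geometric_leaves}, glued along the intersections $(U_i\cap U_j)\times_{V_{ij}}\mathrm{Spec}\,L$, which are non-empty (surjectivity of $U_i\cap U_j\to V_{ij}$) and hence, being open in each geometrically integral summand, dense in each. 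From this, smoothness over $\mathrm{Spec}\,L$ is local on $Z$; geometric integrality follows since a union of irreducible schemes sharing a common dense open is irreducible and smoothness over the algebraically closed $L$ gives geometric reducedness; and the weakly \'{e}tale morphism-of-directed-spaces condition is stalk-local (Lemma~\ref{lem:directed_stalk_local}). The one delicate point --- which I expect to be the main obstacle --- is that $Z\to X\times_S\mathrm{Spec}\,L$ must be a \emph{closed} immersion, whereas a priori it is only an immersion (it is a base change of the diagonal of $Y\to S$, which is an immersion since diagonals of morphisms of schemes are) and is only known to be closed inside the open subscheme $U_y\times_S\mathrm{Spec}\,L$, where $U_y := \bigcup_{y\in V_i}U_i$; indeed the $U_i$ need not be $\pi$-saturated, so $Z$ genuinely can meet more than one $U_i$. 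The plan here is to use that $Z$ is irreducible with a unique generic point, whose image $\eta$ in $X$ is the generic point of the topological fibre $\pi^{-1}(y)$, and that $\eta$ lies in every $U_i$ with $y\in V_i$, since $Z\cap(U_i\times_S\mathrm{Spec}\,L)$ is then a non-empty open of the irreducible $Z$, hence dense, hence contains its generic point. Since an open set containing a specialisation contains the generisation, the closure of $Z$ in $X\times_S\mathrm{Spec}\,L$ cannot leave $U_y\times_S\mathrm{Spec}\,L$; combined with closedness inside that open set, $Z$ is closed in $X\times_S\mathrm{Spec}\,L$, completing the verification that $\pi$ is a geometric quotient.
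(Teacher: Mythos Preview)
Your overall architecture---glue the $V_i$ via Lemma~\ref{lem:open_geometric_quotient}, verify cocycles via Lemma~\ref{lem:intersection_geometric_quotient} and categorical uniqueness, then check Properties~\ref{geometric:invariant}, \ref{geometric:open}, \ref{geometric:exact} stalk-locally---matches the paper exactly, and those parts are fine.

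The gap is in your closed-immersion argument for Property~\ref{geometric:functor}. You correctly identify this as the delicate point, but the argument you give is backwards. You write ``since an open set containing a specialisation contains the generisation, the closure of $Z$ in $X\times_S\mathrm{Spec}\,L$ cannot leave $U_y\times_S\mathrm{Spec}\,L$''. The quoted fact says open sets are stable under \emph{generisation}; what you need is stability under \emph{specialisation}, since every point of $\bar Z$ is a specialisation of the generic point $\zeta\in U_y\times_S\mathrm{Spec}\,L$. Open sets are not stable under specialisation, so nothing prevents $\bar Z$ from acquiring points outside $U_y\times_S\mathrm{Spec}\,L$. Your argument does not establish that $Z$ is closed in $X\times_S\mathrm{Spec}\,L$.

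The paper's fix is much simpler than anything involving generic points: do not restrict to the indices with $y\in V_i$. Take the \emph{full} cover $\{U_i\times_S\mathrm{Spec}\,L\}_{i\in I}$ of $X\times_S\mathrm{Spec}\,L$ (these are affine open since $X\times_S\mathrm{Spec}\,L\to X$ is affine). For each $i$ one has $Z\cap(U_i\times_S\mathrm{Spec}\,L)=Z_i=U_i\times_Y\mathrm{Spec}\,L$. If $y\in V_i$ this is a closed subscheme of $U_i\times_S\mathrm{Spec}\,L$ by hypothesis; if $y\notin V_i$ then $Z_i=\emptyset$, and the empty scheme is trivially a closed subscheme. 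Since being a closed immersion is local on the target (\cite[\href{https://stacks.math.columbia.edu/tag/01QO}{Lemma 01QO}]{stacks-project}), $Z\hookrightarrow X\times_S\mathrm{Spec}\,L$ is a closed immersion. This replaces your entire final paragraph on closedness and removes the need to introduce $U_y$ at all.
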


\begin{proof}
Firstly, the $S$-scheme $Y$ and the $S$-morphism $\pi : X \rightarrow Y$ are constructed. \par

\cite[\href{https://stacks.math.columbia.edu/tag/01JC}{Lemma 01JC}]{stacks-project} is used to glue the schemes $V_i$, for $i \in I$ to obtain $Y$.
If $i$ and $j$ are two indices in $I$, define an open subset $V_{ij}$ of $V_i$ by applying Lemma \ref{lem:open_geometric_quotient} to the open immersion $U_i \cap U_j \rightarrow U_i$ and get a geometric quotient $U_i \cap U_j \rightarrow V_{ij}$.
Note the asymmetry in the notation.
Now, because geometric quotients are categorical quotients (Proposition \ref{prop:geometric_categorical_quotient}), and the morphism $U_i \cap U_j \rightarrow V_{ji}$ is, in particular, $\mathscr{F}$-invariant, there exists a unique morphism $\varphi_{ij} : V_{ij} \rightarrow V_{ji}$.
Note that $V_{ii} = V_i$ and $\varphi_{ii} = \mathds{1}_{V_i}$.
In order to glue these schemes, it is necessary to show that two conditions are satisfied.
For any, not necessarily distinct, triple of indices $i$, $j$ and $k$ in $X$
\begin{enumerate}
\item $\varphi_{ij}^{-1}\left(V_{ji} \cap V_{jk} \right) = V_{ij} \cap V_{ik}$.
\item The diagram
\begin{center}
\begin{tikzcd}
V_{ij} \cap V_{ik} \arrow[rr, "\varphi_{ik}"] \arrow[rd, "\varphi_{ij}"'] & & V_{ki} \cap V_{kj} \\
& V_{ji} \cap V_{jk} \arrow[ru, "\varphi_{jk}"'] &                                                          
\end{tikzcd}
\end{center}
is commutative.
\end{enumerate}

Because $V_{ij}$ and $V_{ji}$ are both geometric quotients of $U_i \cap U_j$, $\varphi_{ji} = \varphi_{ij}^{-1}$.
Now, applying Lemma \ref{lem:intersection_geometric_quotient} to the diagram
\begin{center}
\begin{tikzcd}
U_i \cap U_j \cap U_k \arrow[r] \arrow[d] & U_i \cap U_j \arrow[d] \\
U_i \cap U_k \arrow[r] & U_i
\end{tikzcd}
\end{center}
gives that $V_{ij} \cap V_{ik} \subseteq V_i$ is the geometric quotient of $U_i \cap U_j \cap U_k$.
Repeating the argument, swapping $i$ and $j$, yields that $V_{ji} \cap V_{jk} \subseteq V_j$ is also the geometric quotient of $U_i \cap U_j \cap U_k$.
In particular, there exists an isomorphism $\phi : V_{ji} \cap V_{jk} \rightarrow V_{ij} \cap V_{ik}$.
Now the resulting diagram
\begin{center}
\begin{tikzcd}
V_{ji} \arrow[r, "\varphi_{ji}"] & V_{ij} \\
V_{ji} \cap V_{jk} \arrow[u] \arrow[r, "\phi"'] & V_{ij} \cap V_{ik} \arrow[u]
\end{tikzcd}
\end{center}
is commutative.
Indeed, since the morphism $U_i \cap U_j \cap U_k \rightarrow U_i \cap U_j \rightarrow V_{ij}$ is $\mathscr{F}$-invariant, there can be at most one morphism from the categorical quotient $V_{ji} \cap V_{jk} \rightarrow V_{ij}$.
This shows that $\phi = \varphi_{ji}|_{V_{ji} \cap V_{jk}}$.
Now, (1) follows easily since
$$ \varphi_{ij}^{-1}\left(V_{ji} \cap V_{jk} \right) =
\varphi_{ji}\left(V_{ji} \cap V_{jk} \right) =
\phi \left(V_{ji} \cap V_{jk} \right) =
V_{ij} \cap V_{ik}.$$
On the other hand, (2) also follows.
Indeed, since the morphism $U_i \cap U_j \cap U_k \rightarrow V_{ki} \cap V_{kj}$ is $\mathscr{F}$-invariant, there can be at most one morphism from the categorical quotient $V_{ij} \cap V_{ik} \rightarrow V_{ki} \cap V_{kj}$.
Hence
$$ \varphi_{ik} = \varphi_{jk} \circ \varphi_{ij}.$$
Let $Y$ denote the scheme obtained by performing the glueing. \par

Next, it is shown that the local morphisms $\pi_i : U_i \rightarrow V_i \rightarrow Y$ can be glued in order to obtain a morphism $\pi : X \rightarrow Y$. 
By \cite[\href{https://stacks.math.columbia.edu/tag/01JB}{Lemma 01JB}]{stacks-project}, to give a morphism $\pi : X \rightarrow Y$ is equivalent to giving morphisms $\pi_i : U_i \rightarrow V_i \rightarrow Y$ indexed by $I$, such that $\pi_j|_{U_i \cap U_j} = \varphi_{ij} \circ \pi_i|_{U_i \cap U_j}$.
This is true by construction. Indeed $\varphi_{ij} : V_{ij} \rightarrow V_{ji}$ is defined to be the unique morphism such that $\pi_j|_{U_i \cap U_j} = \varphi_{ij} \circ \pi_i|_{U_i \cap U_j}$.
Therefore there exists a morphism $\pi : X \rightarrow Y$ restricting to morphisms $\pi_i : U_i \rightarrow V_i \rightarrow Y$ for all $i \in I$. \par

Finally, it is shown that $\pi : X \rightarrow Y$ is a geometric quotient. \par

Property \ref{geometric:invariant} holds since being $\mathscr{F}$-invariant is a local property (Lemma \ref{lem:directed_stalk_local}). \par

Property \ref{geometric:open} holds.
Indeed, being open is a local property. \par

Property \ref{geometric:exact} also holds.
To see this, it is enough to show that, for all $x \in X$, the sequence
$$ 0 \rightarrow \mathscr{O}_{Y, \pi(x)} \rightarrow \mathscr{O}_{X,x} \rightarrow \mathscr{F}_x $$
is exact.
Pick an $i \in I$ such that $x \in U_i$.
Then $\mathscr{O}_{Y, \pi(x)} = \mathscr{O}_{V_i, \pi_i(x)}$, $\mathscr{O}_{U_i,x} = \mathscr{O}_{X,x}$ and $\mathscr{F}_x = \left( \iota_i^{\#} \mathscr{F} \right)_x$ by Lemma \ref{lem:etale_pullback}.
The claim follows. \par

It remains to check Property \ref{geometric:functor}.
By Lemma \ref{lem:functor_geometric_leaves}, it is enough to prove that, for all morphisms $\mathrm{Spec}\,L \rightarrow Y$ where $L$ is an algebraically closed field, $X \times_Y \mathrm{Spec}\,L$ is a smooth algebraic leaf of $\mathscr{F}$ over $\mathrm{Spec} L$.
Define $Z_i = U_i \times_{Y} \mathrm{Spec} L = U_i \times_{V_i} \mathrm{Spec} L$.
By assumption, whenver $Z_i$ is non-empty, it is a smooth algebraic leaf of $\iota_i^{\#} \mathscr{F}$ over $L$.
Note also that $\left\{ Z_i \right\}_{i \in I}$ is an open cover of $Z$.
It is shown that $Z \rightarrow X \times_S \mathrm{Spec}\,L$ is a  smooth algebraic leaf. \par

$U_i \times_S \mathrm{Spec}\,L$ is an affine open subset of $X \times_S \mathrm{Spec}\,L$.
Indeed the morphism $X \times_S \mathrm{Spec}\,L \rightarrow X$, being the base change of an affine morphism, is affine.
Furthermore $\left\{ U_i \times_S \mathrm{Spec}\,L \right\}_{i \in I}$ is an open cover of $X$.
By assumption, $Z_i \rightarrow U_i \times_S \mathrm{Spec}\,L$ is  a closed immersion, hence, by \cite[\href{https://stacks.math.columbia.edu/tag/01QO}{Lemma 01QO}]{stacks-project}, $Z \rightarrow X \times_{S} \mathrm{Spec}\,L$ is a closed immersion.
\ref{tangent:immersion} is satisfied. \par

Being smooth is a local property (\cite[\href{https://stacks.math.columbia.edu/tag/01V6}{Lemma 01V6}]{stacks-project}), hence \ref{tangent:smooth} is satisfied. \par

Being reduced is a local property, hence showing that $Z$ is integral amounts to showing that $Z$ is irreducible.
To this end, it is shown that for any non-empty $Z_k$ and for any open set $U \subseteq Z$, $U \cap Z_k \neq \emptyset$.
Because $\left\{ Z_i \right\}_{i \in I}$ is an open cover of $Z$, there exists a $j \in I$ such that $U \cap Z_j \neq \emptyset$.
If it can be shown that $Z_{jk} = \left(U_j \cap U_k \right) \times_Y \mathrm{Spec}\,L$ is non-empty, then $U \cap Z_k \neq \emptyset$.
Indeed, in this case, since $Z_k$ is integral, $Z_{jk}$ would be a dense open subset of $Z_j$, hence $Z_{jk} \cap U$ would be non-empty.
But then
$$ Z_k \cap U \supseteq Z_{jk} \cap U \neq \emptyset.$$
To show that $Z_{jk}$ is non-empty, note that it is a fibre of the morphism $U_j \cap U_k \rightarrow Y$, hence it is enough to show that $V_{jk} = V_j \cap V_k$ (Lemma \ref{lem:intersection_geometric_quotient}) contains the image of $\mathrm{Spec}\,L$ in $Y$.
Because by construction $Z_j$ and $Z_k$ are non-empty, it follows that the image of $\mathrm{Spec}\,L$  is contained in both $V_j$ and $V_k$.
This shows that $Z_{jk}$ is non-empty so that \ref{tangent:integral} holds. \par

By Lemma \ref{lem:directed_stalk_local}, checking that $f : (Z, \Omega_{Z/L}^1 ) \rightarrow (X, \mathscr{F} )$ is a weakly \'{e}tale morphism of directed spaces over $S$ can also be checked locally.
This can be checked on the open cover $\left\{ Z_i \right\}_{i \in I}$ using Lemma \ref{lem:etale_pullback}.
This shows \ref{tangent:distribution}.
\end{proof}

\section{Sheaf of First Integrals}
\label{sec:first_integrals}

This section studies first integrals.
\S \ref{subsec:first_integrals} defines the sheaf of first integrals and shows basic properties.
\S \ref{subsec:algebraic_integrability} uses the notion of first integrals to define algebraic integrability of a distribution.
\S \ref{subsec:finite_generation} proves that, in some cases, first integrals are finitely generated.
\S \ref{subsec:first_integrals_two} shows further properties of the sheaf of first integrals.
In this section, $d_{\mathscr{F}}$ denotes $d_{\mathscr{F},0}$ (Remark \ref{rem:uniqueness_exterior_derivative}).

\subsection{Basic Properties of First Integrals}
\label{subsec:first_integrals}

This subsection introduces first integrals, local first integrals and proves that local first integrals are a direct limit of first integrals (Lemma \ref{lem:localisation_well_defined}).

\begin{definition}[First integrals]
\label{def:first_integral}
Let $k$ be a field and let $(X, \mathscr{F})$ be a directed space over $k$.
The \emph{sheaf of first integrals} $\mathscr{O}_X^{\mathscr{F}}$ is the kernel subsheaf of $\mathscr{O}_X$ by the exterior derivative $d_{\mathscr{F}} = d_{\mathscr{F},0}$.
A local section of $\mathscr{O}_X^{\mathscr{F}}$ over an open set $U \subseteq X$ is called a \emph{first integral} of $\mathscr{F}$ over $U$.
\end{definition}

\begin{remark}[Subsheaf first integrals]
When $X$ is an integral locally Noetherian scheme over a field $k$ and $\mathscr{F}$ is torsion-free, the definition of first integral coincides with the more traditional one.
Under the correspondence of Remark \ref{rem:distribution_corr}, if $\mathscr{T}_{\mathscr{F}} \subseteq \mathscr{T}_X$ is a subsheaf of the tangent sheaf, then a function $f \in \mathscr{O}_X(U)$ is referred as a first integral for $\mathscr{T}_{\mathscr{F}}$ if
$$ \partial(f) = 0 \quad \forall \partial \in \mathscr{T}_{\mathscr{F}}(U).$$
Equivalently,
$$df(\partial) = 0 \quad \forall \partial \in \mathscr{T}_{\mathscr{F}}(U).$$
But this means precisely that $df$ is mapped to $0$ under the morphism
$$\Omega_X^1(U) \rightarrow \mathscr{F}(U) \rightarrow \mathscr{F}^{[1]}(U).$$
Now, since $\mathscr{F}(U)$ is torsion-free, $\mathscr{F}(U) \rightarrow \mathscr{F}^{[1]}(U)$ is injective (\cite[\href{https://stacks.math.columbia.edu/tag/0AY2}{Lemma 0AY2}]{stacks-project}), hence $df$ is mapped to $0 \in \mathscr{F}(U)$.
By definition, this happens if and only if $d_{\mathscr{F}}f = 0$.
\end{remark}

\begin{lemma}[First integrals are $k$-algebras]
\label{lem:sheaf_algebras}
Let $k$ be a field and let $(X, \mathscr{F})$ be a directed space over $k$.
Then $\mathscr{O}_X^{\mathscr{F}}$ is a sheaf of $k$-algebras. 
\end{lemma}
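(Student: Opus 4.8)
The plan is to verify the statement sectionwise, since the kernel of a morphism of sheaves of abelian groups is already a sheaf and is computed as the sectionwise kernel. Fix an open set $U \subseteq X$ and write $q : \Omega_{X/k}^1 \rightarrow \mathscr{F}$ for the defining surjection, so that $d_{\mathscr{F}} = q \circ d$ as a morphism $\mathscr{O}_X \rightarrow \mathscr{F}$; then $\mathscr{O}_X^{\mathscr{F}}(U) = \mathrm{ker}\bigl( \mathscr{O}_X(U) \xrightarrow{d_{\mathscr{F}}} \mathscr{F}(U) \bigr)$, which is a priori a subgroup of $\mathscr{O}_X(U)$. The goal is to upgrade this to a $k$-subalgebra, compatibly with restriction.

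First I would observe that $d_{\mathscr{F}}$ is $k$-linear: the universal derivation $d$ is $k$-linear and $q$ is $\mathscr{O}_X$-linear, hence additive and $k$-linear. Since $d$ annihilates the image of the structural map $k \rightarrow \mathscr{O}_X(U)$, so does $d_{\mathscr{F}}$, and therefore $k$ maps into $\mathscr{O}_X^{\mathscr{F}}(U)$. Next I would check closure under multiplication by pushing the Leibniz rule through the quotient: for $f, g \in \mathscr{O}_X(U)$,
$$ d_{\mathscr{F}}(fg) = q\bigl( d(fg) \bigr) = q\bigl( f\,dg + g\,df \bigr) = f\,q(dg) + g\,q(df) = f\,d_{\mathscr{F}}g + g\,d_{\mathscr{F}}f, $$
where the third equality uses precisely the $\mathscr{O}_X(U)$-linearity of $q$. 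Consequently, if $f, g \in \mathscr{O}_X^{\mathscr{F}}(U)$ then $d_{\mathscr{F}}(fg) = 0$, so $fg \in \mathscr{O}_X^{\mathscr{F}}(U)$; together with the previous point this shows $\mathscr{O}_X^{\mathscr{F}}(U)$ is a $k$-subalgebra of $\mathscr{O}_X(U)$. Finally, for $V \subseteq U$ the restriction map $\mathscr{O}_X(U) \rightarrow \mathscr{O}_X(V)$ is a $k$-algebra homomorphism commuting with $d_{\mathscr{F}}$, so it carries $\mathscr{O}_X^{\mathscr{F}}(U)$ into $\mathscr{O}_X^{\mathscr{F}}(V)$ as a $k$-algebra homomorphism, and $\mathscr{O}_X^{\mathscr{F}}$ is a sheaf of $k$-algebras.

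There is no real obstacle in this argument; the only point requiring a moment's attention is that $q$ is $\mathscr{O}_X$-linear rather than merely additive, since this is what guarantees that the Leibniz identity survives passing to the quotient sheaf $\mathscr{F}$. This holds by definition, as $\mathscr{F}$ is a quasi-coherent quotient sheaf of $\Omega_{X/k}^1$ in the category of $\mathscr{O}_X$-modules (Definition \ref{def:distributions}).
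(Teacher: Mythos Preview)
Your proof is correct and follows essentially the same approach as the paper: both compute the kernel sectionwise, note that constants lie in the kernel, and deduce closure under multiplication from the Leibniz rule for $d_{\mathscr{F}}$. The only cosmetic difference is that the paper defers the product rule to the subsequent Lemma~\ref{lem:product_chain_rule}, whereas you prove it inline; you are also slightly more explicit about compatibility with restriction maps.
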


\begin{proof}
Since the kernel presheaf is already a sheaf,
$$\mathscr{O}_X^{\mathscr{F}}(U) = \left\{ f \in \mathscr{O}_X(U) \, | \, d_{\mathscr{F}} f = 0 \in \mathscr{F}(U) \right\}.$$
Certainly the constant functions $k \hookrightarrow \mathscr{O}_X(U)$ are first integrals.
Now, closure under subtraction and multiplication easily follows from the addition rule and the product rule respectively (Lemma \ref{lem:product_chain_rule}).
It follows that $\mathscr{O}_X^{\mathscr{F}}(U)$ is a $k$-algebra. 
\end{proof}

\begin{lemma}[Differentiation rules]
\label{lem:product_chain_rule}
Let $k$ be a field and let $(X, \mathscr{F})$ be a directed space over $k$.
Write $q : \Omega_{X/S}^1 \rightarrow \mathscr{F}$ for the quotient morphism.
Let $U \subseteq X$ be an open set of $X$, let $f,g \in \mathscr{O}_X(U)$ be local sections and let $p(t) \in \mathscr{O}_X^{\mathscr{F}}(U)[t]$ be a polynomial with coefficients in the ring of first integrals of $\mathscr{F}$ over $U$.
Then
\begin{itemize}
\item \emph{(Addition rule)}. $d_{\mathscr{F}}(f+g) = d_{\mathscr{F}}f + d_{\mathscr{F}}g$.
\item \emph{(Product rule)}. $d_{\mathscr{F}}(fg) = f \cdot d_{\mathscr{F}}g + g \cdot d_{\mathscr{F}}f$.
\item \emph{(Chain rule)}. $d_{\mathscr{F}}(p(f)) = p^{\prime}(f) \cdot d_{\mathscr{F}}f$.
\end{itemize}
\end{lemma}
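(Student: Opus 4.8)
The plan is to reduce all three identities to the single fact that $d_{\mathscr{F}} = d_{\mathscr{F},0}$ is, by construction (Remark \ref{rem:uniqueness_exterior_derivative} and the proof of Lemma \ref{lem:pullback}), the composite $q \circ d$ of the universal $k$-derivation $d : \mathscr{O}_X \to \Omega_{X/S}^1$ with the $\mathscr{O}_X$-linear quotient morphism $q : \Omega_{X/S}^1 \rightarrow \mathscr{F}$. Since all the maps involved are morphisms of sheaves, it suffices to verify each identity on local sections over an affine open $U = \mathrm{Spec}\,B \subseteq X$, where it becomes a statement about the module of Kähler differentials $\Omega_{B/k}^1$. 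Applying the additive map $q$ to the equality $d(f+g) = df + dg$ in $\Omega_{X/S}^1(U)$ gives the addition rule at once. Likewise, applying $q$ to the Leibniz identity $d(fg) = f\,dg + g\,df$ and using that $q$ is $\mathscr{O}_X$-linear yields
$$ d_{\mathscr{F}}(fg) = f \cdot q(dg) + g \cdot q(df) = f \cdot d_{\mathscr{F}}g + g \cdot d_{\mathscr{F}}f, $$
which is the product rule.

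For the chain rule, I would first record the consequence $d_{\mathscr{F}}(f^n) = n f^{n-1} d_{\mathscr{F}}f$ for all $n \geq 0$, proved by an immediate induction on $n$ from the product rule just established (the cases $n = 0,1$ being trivial). Writing $p(t) = \sum_{i=0}^m a_i t^i$ with $a_i \in \mathscr{O}_X^{\mathscr{F}}(U)$, so that $p(f) = \sum_i a_i f^i$, the addition and product rules give
$$ d_{\mathscr{F}}(p(f)) = \sum_{i=0}^m \left( a_i \cdot d_{\mathscr{F}}(f^i) + f^i \cdot d_{\mathscr{F}}a_i \right). $$
By Definition \ref{def:first_integral}, $d_{\mathscr{F}}a_i = 0$ for every $i$ because each $a_i$ is a first integral of $\mathscr{F}$ over $U$; hence the second summand vanishes. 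Substituting $d_{\mathscr{F}}(f^i) = i f^{i-1} d_{\mathscr{F}}f$ then gives
$$ d_{\mathscr{F}}(p(f)) = \left( \sum_{i=0}^m i a_i f^{i-1} \right) d_{\mathscr{F}}f = p^{\prime}(f) \cdot d_{\mathscr{F}}f, $$
as required.

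There is no substantive obstacle here: the lemma is a purely formal consequence of the Leibniz rule for the universal derivation, the $\mathscr{O}_X$-linearity of $q$, and the defining property of the sheaf of first integrals. The only points deserving a word of care are that the asserted equalities are equalities of local sections over an arbitrary open $U$, so one passes to a basis of affine opens to invoke the classical statements for $\Omega_{B/k}^1$; and that the hypothesis $p(t) \in \mathscr{O}_X^{\mathscr{F}}(U)[t]$ (rather than $\mathscr{O}_X(U)[t]$) is exactly what kills the otherwise-surviving terms $f^i \cdot d_{\mathscr{F}}a_i$ in the chain rule.
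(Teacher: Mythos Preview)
Your proof is correct and follows essentially the same approach as the paper: both derive the addition and product rules directly from the corresponding properties of $d$ together with the $\mathscr{O}_X$-linearity of $q$, and both obtain the chain rule by first establishing $d_{\mathscr{F}}(f^i)=i f^{i-1}d_{\mathscr{F}}f$ from the product rule, then expanding $p(f)=\sum a_i f^i$ and using $d_{\mathscr{F}}a_i=0$.
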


\begin{proof}
The \emph{addition rule} and the \emph{product rule} are an easy consequence of the corresponding properties of $d$ together with the fact that $q$ is $\mathscr{O}_X(U)$-linear. \par

For the \emph{chain rule}, write $p(t) = \sum_{i=0}^d a_i t^i$ where $d$ is the degree of $p$ and $a_i \in \mathscr{O}_X^{\mathscr{F}}(U)$ for all $i$.
Firstly note that applying the product rule repeatedly yields
$$ d_{\mathscr{F}} f^i = i\,f^{i-1} \cdot d_{\mathscr{F}}f.$$
But then
\begin{align*}
d_{\mathscr{F}}(p(f)) &= d_{\mathscr{F}} \left( \sum_{i=0}^d a_i f^i \right) &\\
&= \sum_{i=0}^d \left( d_{\mathscr{F}}(a_i f^i) \right) 
& \text{(Addition Rule)} \\
&= \sum_{i=0}^d \left(a_i \cdot d_{\mathscr{F}} f^i + f^i \cdot d_{\mathscr{F}}a_i \right) & \text{(Product Rule)}\\
&= \sum_{i=0}^d a_i \cdot d_{\mathscr{F}} f^i & (d_{\mathscr{F}} a_i = 0) \\
&= \sum_{i=0}^d \left(i\, a_i f^{i-1} \right) \cdot d_{\mathscr{F}}f & \text{(Product Rule)} \\
&= p^{\prime}(f) \cdot d_{\mathscr{F}}f. &
\end{align*}
\end{proof}

\begin{lemma}[First integrals and localisation]
\label{lem:distinguished_first_integrals}
Let $k$ be a field and let $(X, \mathscr{F})$ be an affine directed space over $k$.
Suppose that $f \in \Gamma \left( X, \mathscr{O}_X^{\mathscr{F}} \right)$.
Then
$$ \Gamma \left( X_f , \mathscr{O}_X^{\mathscr{F}} \right) = \Gamma \left( X, \mathscr{O}_X^{\mathscr{F}} \right)_f.$$
\end{lemma}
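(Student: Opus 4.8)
The plan is to translate both sides of the claimed equality into statements about the canonical $k$-derivation $d_F\colon B\to F$ and its localisation at $f$. Write $X=\mathrm{Spec}\,B$ with $B=\Gamma(X,\mathscr{O}_X)$, put $F=\Gamma(X,\mathscr{F})$ so that $\mathscr{F}=\widetilde{F}$, and let $d_F\colon B\to F$ be the $k$-derivation inducing $d_{\mathscr{F},0}$; by Definition \ref{def:first_integral} we have $A:=\Gamma(X,\mathscr{O}_X^{\mathscr{F}})=\ker(d_F)$, and by hypothesis $f\in A$. First I would identify the left-hand side. Since $\iota\colon X_f\hookrightarrow X$ is an open immersion, $\iota^{-1}$ is exact, so $\mathscr{O}_X^{\mathscr{F}}|_{X_f}$ is the kernel of the restriction of $d_{\mathscr{F}}$; by Lemma \ref{lem:etale_pullback} one has $\iota^{\#}\mathscr{F}=\iota^{*}\mathscr{F}=\widetilde{F_f}$ with $F_f=F\otimes_B B_f$, and the restricted derivation is precisely the canonical derivation $d_{F_f}\colon B_f\to F_f$ of the affine directed space $(X_f,\iota^{\#}\mathscr{F})=\mathrm{Spec}\,B_f$. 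Taking global sections over the affine $X_f$ and using left exactness of $\Gamma$ then gives $\Gamma(X_f,\mathscr{O}_X^{\mathscr{F}})=\ker(d_{F_f}\colon B_f\to F_f)$.

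Next I would compute $d_{F_f}$ on fractions. For $b\in B$ and $n\ge 0$, the product rule (Lemma \ref{lem:product_chain_rule}) applied to the identity $b=(b/f^{n})\cdot f^{n}$, together with $d_{F_f}(f^{n})=n f^{n-1}d_F f=0$ (valid since $f\in A$), gives $f^{n}\cdot d_{F_f}(b/f^{n})=d_F b$ in $F_f$, hence $d_{F_f}(b/f^{n})=(d_F b)/f^{n}$. By the standard description of the kernel of a localisation, $(d_F b)/f^{n}=0$ in $F_f$ if and only if $f^{m}\,d_F b=0$ in $F$ for some $m\ge 0$. Now $A_f\subseteq\ker d_{F_f}$ is immediate, since $d_F a=0$ for $a\in A$. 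For the reverse inclusion, if $b/f^{n}\in\ker d_{F_f}$ choose $m$ with $f^{m}d_F b=0$; then $d_F(f^{m}b)=f^{m}d_F b+b\,d_F(f^{m})=0$ (again because $d_F f=0$), so $f^{m}b\in A$ and $b/f^{n}=(f^{m}b)/f^{n+m}\in A_f$. This yields $\ker d_{F_f}=A_f=\Gamma(X,\mathscr{O}_X^{\mathscr{F}})_f$, which is the claim.

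The only point requiring care is the identification in the first paragraph, namely that the restriction of $d_{\mathscr{F}}$ to the distinguished affine open $X_f$ is exactly the canonical derivation of the pulled-back directed space $(X_f,\iota^{\#}\mathscr{F})$; this rests on Lemma \ref{lem:etale_pullback} and on the compatibility of the exterior derivative with localisation, which — by the uniqueness of $d_{\mathscr{F}}$ from Remark \ref{rem:uniqueness_exterior_derivative}, making the check local — reduces to the affine computation already carried out in the proof of Lemma \ref{lem:pullback}. Everything after that is elementary commutative algebra: the mechanism making the statement true is that inverting the first integral $f$ destroys the only obstruction, because $d_F f=0$ turns the $f$-torsion relation $f^{m}d_F b=0$ into the statement $d_F(f^{m}b)=0$.
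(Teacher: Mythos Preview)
Your proof is correct and rests on the same idea as the paper's: since $f$ is a first integral, the derivation $d_F$ is compatible with inverting $f$, and this compatibility is exactly the product rule (Lemma \ref{lem:product_chain_rule}). The paper packages this more concisely by observing that the product rule makes the exact sequence $0\to A\to B\xrightarrow{d_F} F$ one of $A$-modules, so that localising at $f\in A$ preserves exactness and immediately gives $A_f=\ker(B_f\to F_f)$; your element-level verification of the two inclusions is precisely what this exactness statement unwinds to.
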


\begin{proof}
By definition, there is an exact sequence
$$ 0 \rightarrow \Gamma \left( X, \mathscr{O}_X^{\mathscr{F}} \right) \rightarrow \Gamma \left( X, \mathscr{O}_X \right) \rightarrow \Gamma \left( X, \mathscr{F} \right).$$
By Lemma \ref{lem:product_chain_rule}, this is an exact sequence of $\Gamma \left( X, \mathscr{O}_X^{\mathscr{F}} \right)$-modules. Localising by $f$ preserves exactness, hence
$$ 0 \rightarrow \Gamma \left( X, \mathscr{O}_X^{\mathscr{F}} \right)_f \rightarrow \Gamma \left( X, \mathscr{O}_X \right)_f \rightarrow \Gamma \left( X, \mathscr{F} \right)_f.$$
is exact.
But then, since $X$ is affine, $\Gamma \left( X, \mathscr{O}_X \right)_f = \Gamma \left( X_f, \mathscr{O}_X \right)$ and $\Gamma \left( X, \mathscr{F} \right)_f = \Gamma \left( X_f, \mathscr{F} \right)$.
This shows that 
$$ \Gamma \left( X, \mathscr{O}_X^{\mathscr{F}} \right)_f =
\mathrm{ker}\left( \Gamma \left( X_f, \mathscr{O}_X \right) \rightarrow \Gamma \left( X_f, \mathscr{F} \right) \right) =  \Gamma \left( X_f , \mathscr{O}_X^{\mathscr{F}} \right).$$
\end{proof}

\begin{definition}[Local first integrals]
\label{def:local_first_integrals}
Let $k$ be a field and let $(X, \mathscr{F})$ be a directed space over $k$.
Write $q : \Omega_{X/S}^1 \rightarrow \mathscr{F}$ for the quotient morphism.
For any $x \in X$, the \emph{ring of local first integrals} at $x$ is
$$ \mathscr{O}_{X,x}^{\mathscr{F}} = \mathrm{ker} \left( d_{\mathscr{F},x} :  \mathscr{O}_{X,x} \xrightarrow{d_x} \Omega_{\mathscr{O}_{X,x}/k}^1 = \Omega_{X/k,x}^1 \xrightarrow{q_x} \mathscr{F}_x\right).$$
This is a $k$-algebra.
\end{definition}

\begin{remark}[Local first integrals are well-defined]
\label{rem:local_well_defined}
Some properties of the objects considered in Definition \ref{def:local_first_integrals} need to be checked.
Firstly, the equality $\Omega_{\mathscr{O}_{X,x}/k}^1 = \Omega_{X/k,x}^1$ follows from Lemma \ref{lem:cotangent_localisation}.
Secondly, $\mathscr{O}_{X,x}^{\mathscr{F}}$ is a $k$-algebra by applying Lemma \ref{lem:sheaf_algebras} to the local scheme $\mathrm{Spec}\,\mathscr{O}_{X,x}$ with the foliation $\mathscr{F}_x$.
\end{remark}

\begin{lemma}
\label{lem:cotangent_localisation}
Let $S$ be a scheme and let $X$ be a scheme over $S$. Let $x \in X$ be a point and let $s \in S$ be its image. Then
$$ \Omega_{X/S,x}^1 = \Omega_{\mathscr{O}_{X,x}/\mathscr{O}_{S,s}}^1.$$
Furthermore, the exterior derivatives
\begin{center}
\begin{tikzcd}
\mathscr{O}_X \arrow[r, "d"] \arrow[d] & \Omega_{X/S}^1 \arrow[d] \\
\mathscr{O}_{X,x} \arrow[r, "d_x"'] & \Omega_{X/S,x}^1
\end{tikzcd}
\end{center}
are compatible.
\end{lemma}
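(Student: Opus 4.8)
The plan is to reduce the statement to commutative algebra and then to invoke the standard behaviour of Kähler differentials under localisation. Since the formation of $\Omega_{X/S}^1$ commutes with restriction to open subschemes, and the stalk at $x$ does not depend on the chosen affine neighbourhood, I would first pick affine opens $\mathrm{Spec}\,B \subseteq X$ containing $x$ and $\mathrm{Spec}\,A \subseteq S$ containing $s$ such that $\mathrm{Spec}\,B$ maps into $\mathrm{Spec}\,A$. Write $\mathfrak{q} \subseteq B$ for the prime corresponding to $x$ and $\mathfrak{p} = \mathfrak{q} \cap A$ for the prime corresponding to $s$. Then $\mathscr{O}_{X,x} = B_{\mathfrak{q}}$, $\mathscr{O}_{S,s} = A_{\mathfrak{p}}$ and $\Omega_{X/S,x}^1 = (\Omega_{B/A}^1)_{\mathfrak{q}}$, so the content of the lemma is a canonical isomorphism of $B_{\mathfrak{q}}$-modules
$$ (\Omega_{B/A}^1)_{\mathfrak{q}} \xrightarrow{\sim} \Omega_{B_{\mathfrak{q}}/A_{\mathfrak{p}}}^1 $$
intertwining the localised derivation with the universal one.

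I would construct this isomorphism in two steps. First, Kähler differentials commute with localisation of the upper ring, so $(\Omega_{B/A}^1)_{\mathfrak{q}} = \Omega_{B_{\mathfrak{q}}/A}^1$, and the localisation $d_{\mathfrak{q}}$ of $d : B \to \Omega_{B/A}^1$ becomes the universal $A$-derivation of $B_{\mathfrak{q}}$ (\cite[\href{https://stacks.math.columbia.edu/tag/00RV}{Lemma 00RV}]{stacks-project}). Second, the composite $A \to B \to B_{\mathfrak{q}}$ factors through $A_{\mathfrak{p}}$: indeed, by the very definition $\mathfrak{p} = \mathfrak{q} \cap A$, an element of $A \setminus \mathfrak{p}$ has image in $B \setminus \mathfrak{q}$, hence a unit in $B_{\mathfrak{q}}$. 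As $\Omega_{A_{\mathfrak{p}}/A}^1 = 0$ (the same localisation formula, applied to $A \to A$), the first fundamental exact sequence attached to $A \to A_{\mathfrak{p}} \to B_{\mathfrak{q}}$ (\cite[\href{https://stacks.math.columbia.edu/tag/00RT}{Lemma 00RT}]{stacks-project}) degenerates to an isomorphism $\Omega_{B_{\mathfrak{q}}/A}^1 \xrightarrow{\sim} \Omega_{B_{\mathfrak{q}}/A_{\mathfrak{p}}}^1$, which carries the $A$-derivation to the $A_{\mathfrak{p}}$-derivation. Composing the two steps yields the claimed isomorphism.

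For the compatibility of exterior derivatives, I would just observe that every map occurring in the square is the canonical one: the left vertical arrow is the localisation map $\Omega_{B/A}^1 \to (\Omega_{B/A}^1)_{\mathfrak{q}}$, the right vertical arrow is $B \to B_{\mathfrak{q}}$, and commutativity of the square before any identification is precisely the assertion that $d_{\mathfrak{q}}$ is the localisation of $d$; the second step above only reinterprets the target module, not the derivation. Hence, under the identification of the first part, $d_x : \mathscr{O}_{X,x} \to \Omega_{\mathscr{O}_{X,x}/\mathscr{O}_{S,s}}^1$ is the universal derivation and the square commutes. I do not anticipate any genuine obstacle: the only points requiring a little care are verifying that $A \to B_{\mathfrak{q}}$ factors through $A_{\mathfrak{p}}$ and keeping track of the ground ring over which one differentiates at each stage, both of which follow at once from the universal property of localisation.
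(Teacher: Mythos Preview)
Your proof is correct and follows essentially the same approach as the paper, which simply cites \cite[\href{https://stacks.math.columbia.edu/tag/01US}{Lemma 01US}]{stacks-project} and \cite[\href{https://stacks.math.columbia.edu/tag/00RT}{Lemma 00RT}]{stacks-project}: reduce to affines, use that K\"ahler differentials commute with localisation of the top ring, and then use the relative cotangent sequence (together with $\Omega_{A_{\mathfrak{p}}/A}^1 = 0$) to pass from the base $A$ to $A_{\mathfrak{p}}$. One cosmetic slip: in your compatibility paragraph you have swapped ``left'' and ``right'' when describing the vertical arrows of the square, but the argument is unaffected.
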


\begin{proof}
Combine \cite[\href{https://stacks.math.columbia.edu/tag/01US}{Lemma 01US}]{stacks-project} and \cite[\href{https://stacks.math.columbia.edu/tag/00RT}{Lemma 00RT}]{stacks-project}.
\end{proof}

\begin{lemma}[Local structure of local first integrals]
\label{lem:morphism_local_rings}
Let $k$ be a field and let $(X, \mathscr{F})$ be an integral directed space over $k$.
Then, the ring of local first integrals $\mathscr{O}_{X,x}^{\mathscr{F}}$ is a local ring and the morphism 
$$\mathscr{O}_{X,x}^{\mathscr{F}} \rightarrow \mathscr{O}_{X,x}$$
is a morphism of local rings.
\end{lemma}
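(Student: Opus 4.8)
The plan is to use the elementary criterion that a subring $A$ of a local ring $(B,\mathfrak{m})$ is itself local, with maximal ideal $A \cap \mathfrak{m}$ and with local inclusion $A \hookrightarrow B$, precisely when every element of $A$ lying outside $\mathfrak{m}$ is invertible in $A$. So first I would record that $\mathscr{O}_{X,x}$ is a local ring with maximal ideal $\mathfrak{m}_x$, that $\mathscr{O}_{X,x}^{\mathscr{F}}$ is a $k$-subalgebra of it (Remark \ref{rem:local_well_defined}), and set $\mathfrak{n} = \mathfrak{m}_x \cap \mathscr{O}_{X,x}^{\mathscr{F}}$, which is a proper ideal since $1 \notin \mathfrak{m}_x$.

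The heart of the argument is to show that every $f \in \mathscr{O}_{X,x}^{\mathscr{F}}$ with $f \notin \mathfrak{m}_x$ is a unit in $\mathscr{O}_{X,x}^{\mathscr{F}}$. Since $\mathscr{O}_{X,x}$ is local, such an $f$ is already a unit there, with inverse $g = f^{-1} \in \mathscr{O}_{X,x}$; what must be checked is that $g$ is again a first integral. Applying $d_{\mathscr{F}} = d_{\mathscr{F},0}$ to the identity $fg = 1$ and invoking the product rule of Lemma \ref{lem:product_chain_rule} (applied to $\mathrm{Spec}\,\mathscr{O}_{X,x}$ with the distribution $\mathscr{F}_x$, as in Remark \ref{rem:local_well_defined}) gives $0 = d_{\mathscr{F}}(fg) = f \cdot d_{\mathscr{F}}g + g \cdot d_{\mathscr{F}}f = f \cdot d_{\mathscr{F}}g$, using $d_{\mathscr{F}}f = 0$. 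Since $f$ is a unit of $\mathscr{O}_{X,x}$, multiplication by $f$ is an automorphism of the $\mathscr{O}_{X,x}$-module $\mathscr{F}_x$, hence $d_{\mathscr{F}}g = 0$ and $g \in \mathscr{O}_{X,x}^{\mathscr{F}}$.

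With this in hand, any proper ideal of $\mathscr{O}_{X,x}^{\mathscr{F}}$ must avoid $\mathscr{O}_{X,x}^{\mathscr{F}} \setminus \mathfrak{m}_x$ (an ideal containing a unit is the whole ring), hence is contained in $\mathfrak{n}$; so $\mathscr{O}_{X,x}^{\mathscr{F}}$ is local with maximal ideal $\mathfrak{n}$. Finally, the inclusion $\mathscr{O}_{X,x}^{\mathscr{F}} \hookrightarrow \mathscr{O}_{X,x}$ carries $\mathfrak{n}$ into $\mathfrak{m}_x$ --- indeed $\mathfrak{n}$ is exactly the preimage of $\mathfrak{m}_x$ --- so it is a morphism of local rings. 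I do not expect a genuine obstacle here: the only step needing care is the stalk-level application of the product rule together with the observation that a unit of $\mathscr{O}_{X,x}$ acts invertibly on $\mathscr{F}_x$, which is precisely what forces $f^{-1}$ to remain a first integral; integrality of $X$ plays no essential role beyond legitimising the set-up of Definition \ref{def:local_first_integrals}.
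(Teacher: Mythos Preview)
Your proposal is correct and follows essentially the same approach as the paper: both set $\mathfrak{n}_x = \mathfrak{m}_x \cap \mathscr{O}_{X,x}^{\mathscr{F}}$ and show it is the unique maximal ideal by proving that any $f \in \mathscr{O}_{X,x}^{\mathscr{F}} \setminus \mathfrak{m}_x$ has its inverse $g \in \mathscr{O}_{X,x}$ again a first integral via the product rule applied to $fg=1$ and the fact that $f$ acts invertibly on $\mathscr{F}_x$. Your write-up is slightly more explicit about the elementary ring-theoretic criterion being invoked, but the argument is the same.
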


\begin{proof}
Let $\mathfrak{m}_x$ be the maximal ideal of $\mathscr{O}_{X,x}$. 
It is shown that the inverse image of $\mathfrak{m}_x$ by $\mathscr{O}_{X,x}^{\mathscr{F}} \rightarrow \mathscr{O}_{X,x}$,
$$ \mathfrak{n}_x = \left\{ f \in \mathscr{O}_{X,x} \, | \, d_{\mathscr{F},x}(f) = 0 \text{ and } f \in \mathfrak{m}_x \right\}, $$
is the maximal ideal of $\mathscr{O}_{X,x}^{\mathscr{F}}$.
This simultaneously shows both statements.
$\mathfrak{n}_x$ clearly is an ideal.
To show that it is the maximal ideal, it is shown that any element of $\mathscr{O}_{X,x}^{\mathscr{F}}$ not contained in $\mathfrak{n}_x$ is a unit.
Let $u$ be such an element.
By assumption, $d_{\mathscr{F},x}(u) = 0$ and $u \notin \mathfrak{m}_x$.
As a result, $u$ is a unit in $\mathscr{O}_{X,x}$, hence there exists a $v \in \mathscr{O}_{X,x}$ such that $uv = 1$.
Applying the exterior derivative yields
$$ u \cdot d_{\mathscr{F},x}(v) = 0. $$
But $u$ is a unit in $\mathscr{O}_{X,x}$, hence $d_{\mathscr{F},x}(v) = 0$ and $v \in \mathscr{O}_{X,x}^{\mathscr{F}}$.
This shows that $u$ is a unit in $\mathscr{O}_{X,x}^{\mathscr{F}}$.
\end{proof}

\begin{lemma}[Local first integrals are compatible]
\label{lem:commutative_description}
Let $k$ be a field and let $(X, \mathscr{F})$ be an integral directed space over $k$.
Let $U \subseteq X$ be an open set of $X$ and let $x \in U$.
Then, there exists a commutative diagram
\begin{center}
\begin{tikzcd}
\mathscr{O}_X^{\mathscr{F}}(U) \arrow[r] \arrow[d] & \mathscr{O}_X(U) \arrow[d] \\
\mathscr{O}_{X,x}^{\mathscr{F}} \arrow[r] & \mathscr{O}_{X,x}.
\end{tikzcd}
\end{center}
\end{lemma}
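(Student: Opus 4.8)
The plan is to recognise that the entire square lives inside the single compatibility between sections over $U$ and germs at $x$, and that the only real content is the observation that the germ of a first integral over $U$ is a local first integral at $x$.

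First I would fix notation: let $q : \Omega_{X/k}^1 \rightarrow \mathscr{F}$ denote the quotient morphism, so that $d_{\mathscr{F}} = q \circ d$ on $U$ and $d_{\mathscr{F},x} = q_x \circ d_x$ on $\mathscr{O}_{X,x}$, and recall that, since $U \subseteq X$ is open, one has $\mathscr{O}_{X,x} = \mathscr{O}_{U,x}$, $\mathscr{F}_x = (\mathscr{F}|_U)_x$ and, by Lemma \ref{lem:cotangent_localisation}, $\Omega_{X/k,x}^1 = \Omega_{\mathscr{O}_{X,x}/k}^1$. The right-hand vertical arrow of the square is then the canonical germ map $\mathscr{O}_X(U) \rightarrow \mathscr{O}_{X,x}$, and the two horizontal arrows are the tautological inclusions exhibiting $\mathscr{O}_X^{\mathscr{F}}(U)$ and $\mathscr{O}_{X,x}^{\mathscr{F}}$ as the kernels of $d_{\mathscr{F}}$ over $U$ and of $d_{\mathscr{F},x}$ respectively. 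Thus the only thing to be produced is the left-hand vertical arrow.

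Next I would show that the germ map sends $\mathscr{O}_X^{\mathscr{F}}(U)$ into $\mathscr{O}_{X,x}^{\mathscr{F}}$. Given $f \in \mathscr{O}_X^{\mathscr{F}}(U)$ we have $d_{\mathscr{F}}f = q(df) = 0$ in $\mathscr{F}(U)$. Passing to germs at $x$, Lemma \ref{lem:cotangent_localisation} gives $(df)_x = d_x(f_x)$ (the exterior derivatives are compatible with localisation), and naturality of the localisation functor applied to $q$ gives $q_x((df)_x) = (q(df))_x$; hence
$$ d_{\mathscr{F},x}(f_x) = q_x(d_x(f_x)) = q_x((df)_x) = (q(df))_x = (d_{\mathscr{F}}f)_x = 0 \in \mathscr{F}_x, $$
so $f_x \in \mathscr{O}_{X,x}^{\mathscr{F}}$. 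The assignment $f \mapsto f_x$ is the required left vertical arrow, and commutativity of the square is then immediate, because by construction this arrow is simply the restriction of the germ map $\mathscr{O}_X(U) \rightarrow \mathscr{O}_{X,x}$ to the subring $\mathscr{O}_X^{\mathscr{F}}(U)$, while the horizontal arrows are the inclusions into $\mathscr{O}_X(U)$ and $\mathscr{O}_{X,x}$.

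I do not expect any genuine obstacle here: the argument is entirely formal. The only point requiring care is the bookkeeping of the canonical identification of the stalk of $\Omega_{X/k}^1$ and of the compatibility of $d$ and of $q$ with passage to germs, all of which is supplied by Lemma \ref{lem:cotangent_localisation} together with the naturality of the localisation functor; the integrality hypothesis on $(X,\mathscr{F})$ is not actually needed for this statement.
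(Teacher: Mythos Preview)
Your proposal is correct and follows essentially the same approach as the paper: both use Lemma~\ref{lem:cotangent_localisation} to see that the germ map is compatible with $d_{\mathscr{F}}$ and $d_{\mathscr{F},x}$, and then deduce that germs of first integrals land in $\mathscr{O}_{X,x}^{\mathscr{F}}$. The only cosmetic difference is that the paper phrases this via the universal property of the kernel and then invokes Lemma~\ref{lem:morphism_groups_rings} to upgrade to a ring map, whereas you work elementwise and observe directly that the map is the restriction of the ring homomorphism $\mathscr{O}_X(U)\to\mathscr{O}_{X,x}$ to a subring.
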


\begin{proof}
By Lemma \ref{lem:cotangent_localisation}, there is a commutative diagram
\begin{center}
\begin{tikzcd}
\mathscr{O}_X^{\mathscr{F}}(U) \arrow[r] & \mathscr{O}_X(U) \arrow[d] \arrow[r, "d_U"] & \mathscr{F}(U) \arrow[d] \\
\mathscr{O}_{X,x}^{\mathscr{F}} \arrow[r] & \mathscr{O}_{X,x}  \arrow[r, "d_x"'] & \mathscr{F}_x.
\end{tikzcd}
\end{center}
By the universal property of the kernel
$$ \mathscr{O}_{X,x}^{\mathscr{F}} = \mathrm{ker} \left( \mathscr{O}_{X,x}  \rightarrow \mathscr{F}_x \right),$$
there exists a unique morphism of abelian groups $\mathscr{O}_X^{\mathscr{F}}(U) \rightarrow \mathscr{O}_{X,x}^{\mathscr{F}}$.
By Lemma \ref{lem:morphism_groups_rings}, this is a morphism of rings.
\end{proof}

\begin{lemma}[Local first integrals and Cartesian diagram]
\label{lem:cartesian_description}
Let $k$ be a field and let $(X, \mathscr{F})$ be an integral directed space over $k$.
Suppose that $\mathscr{F}$ is torsion-free.
Let $U \subseteq X$ be an open set of $X$ and let $x \in U$. Then, the diagram
\begin{center}
\begin{tikzcd}
\mathscr{O}_X^{\mathscr{F}}(U) \arrow[r] \arrow[d] & \mathscr{O}_X(U) \arrow[d] \\
\mathscr{O}_{X,x}^{\mathscr{F}} \arrow[r] & \mathscr{O}_{X,x}
\end{tikzcd}
\end{center}
is Cartesian in the category of rings.
\end{lemma}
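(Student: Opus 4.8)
The plan is to write down the canonical comparison map from $\mathscr{O}_X^{\mathscr{F}}(U)$ to the fibre product $\mathscr{O}_X(U)\times_{\mathscr{O}_{X,x}}\mathscr{O}_{X,x}^{\mathscr{F}}$ formed in the category of rings, and show it is an isomorphism. First I would simplify the target. Since $X$ is integral, the germ map $\mathscr{O}_X(U)\to\mathscr{O}_{X,x}$ is injective, and $\mathscr{O}_{X,x}^{\mathscr{F}}\to\mathscr{O}_{X,x}$ is injective since by Definition \ref{def:local_first_integrals} it is a kernel subring of $\mathscr{O}_{X,x}$; hence projection onto the first factor identifies the fibre product with the subring
$$ \{\, s\in\mathscr{O}_X(U)\mid d_{\mathscr{F},x}(s_x)=0 \text{ in } \mathscr{F}_x \,\} $$
of $\mathscr{O}_X(U)$. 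By Lemma \ref{lem:commutative_description} the given square commutes, so the universal property of the fibre product in rings supplies a ring homomorphism $\mathscr{O}_X^{\mathscr{F}}(U)\to\mathscr{O}_X(U)\times_{\mathscr{O}_{X,x}}\mathscr{O}_{X,x}^{\mathscr{F}}$ whose composite with the first projection is the inclusion $\mathscr{O}_X^{\mathscr{F}}(U)\hookrightarrow\mathscr{O}_X(U)$. In particular this homomorphism is injective, so the whole problem reduces to proving it is surjective.

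For surjectivity I would take $s\in\mathscr{O}_X(U)$ with $d_{\mathscr{F},x}(s_x)=0$ in $\mathscr{F}_x$ and prove $d_{\mathscr{F}}(s)=0$ in $\mathscr{F}(U)$. By the compatibility of exterior derivatives with localisation recorded in Lemma \ref{lem:cotangent_localisation}, the germ of $d_{\mathscr{F}}(s)\in\mathscr{F}(U)$ at $x$ is $d_{\mathscr{F},x}(s_x)=0$. Let $\eta$ be the generic point of $X$; it lies in $U$ because $X$ is irreducible and $U$ is non-empty, and since every neighbourhood of $x$ contains $\eta$ the germ map $\mathscr{F}(U)\to\mathscr{F}_\eta$ factors through $\mathscr{F}_x$, so the germ of $d_{\mathscr{F}}(s)$ at $\eta$ also vanishes. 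Now I would invoke that $\mathscr{F}$ is torsion-free: for every point $y\in U$ the local ring $\mathscr{O}_{X,y}$ is a domain with fraction field $\mathscr{O}_{X,\eta}$, the stalk $\mathscr{F}_y$ is a torsion-free $\mathscr{O}_{X,y}$-module, and $\mathscr{F}_\eta=\mathscr{F}_y\otimes_{\mathscr{O}_{X,y}}\mathscr{O}_{X,\eta}$, so $\mathscr{F}_y\to\mathscr{F}_\eta$ is injective; combined with the fact that $\mathscr{F}$ is a sheaf (so a section is detected by its germs) this shows $\mathscr{F}(U)\to\mathscr{F}_\eta$ is injective. Hence $d_{\mathscr{F}}(s)=0$, i.e. $s\in\mathscr{O}_X^{\mathscr{F}}(U)$. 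This proves surjectivity, so the comparison map is a bijective ring homomorphism, therefore a ring isomorphism, and the square is Cartesian in the category of rings; if one wishes to avoid checking multiplicativity by hand, Lemma \ref{lem:morphism_groups_rings} re-derives it.

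The one genuinely delicate point is the step in the second paragraph that propagates the vanishing of $d_{\mathscr{F}}(s)$ from the single point $x$ to all of $U$, and this is exactly where torsion-freeness of $\mathscr{F}$ is indispensable: without it a non-zero section of $\mathscr{F}$ supported away from $x$ would break the argument, matching the pathological behaviour of first integrals illustrated in Example \ref{ex:radii}. Everything else — identifying the ring-theoretic fibre product with a subring of $\mathscr{O}_X(U)$, and extracting the comparison map from the commuting square of Lemma \ref{lem:commutative_description} — is routine.
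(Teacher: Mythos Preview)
Your proof is correct and follows essentially the same approach as the paper: both hinge on torsion-freeness of $\mathscr{F}$ to show that a section of $\mathscr{F}(U)$ whose germ vanishes at $x$ must vanish globally. The only cosmetic difference is that the paper invokes \cite[\href{https://stacks.math.columbia.edu/tag/0AXS}{Lemma 0AXS}]{stacks-project} to get $\mathscr{F}(U)\hookrightarrow\mathscr{F}_x$ directly, rather than detouring through the generic point $\eta$ as you do, and phrases the argument via the universal property of the fibre product rather than by explicitly identifying it as a subring; neither variation affects the substance.
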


\begin{proof}
Consider once again the diagram from Lemma \ref{lem:commutative_description}. 
\begin{center}
\begin{tikzcd}
\mathscr{O}_X^{\mathscr{F}}(U) \arrow[r] \arrow[d] & \mathscr{O}_X(U) \arrow[d] \arrow[r, "d_U"] & \mathscr{F}(U) \arrow[d] \\
\mathscr{O}_{X,x}^{\mathscr{F}} \arrow[r] & \mathscr{O}_{X,x}  \arrow[r, "d_x"'] & \mathscr{F}_x.
\end{tikzcd}
\end{center}
Let $C$ be a ring and suppose that there is a cone
\begin{center}
\begin{tikzcd}
C \arrow[r] \arrow[d] & \mathscr{O}_X(U) \arrow[d] \\
\mathscr{O}_{X,x}^{\mathscr{F}} \arrow[r] & \mathscr{O}_{X,x}.
\end{tikzcd}
\end{center}
It is shown that there exists a unique morphism $C \rightarrow \mathscr{O}_X^{\mathscr{F}}(U)$. 
Observe that the composition $C \rightarrow \mathscr{O}_{X,x}^{\mathscr{F}} \rightarrow \mathscr{O}_{X,x} \rightarrow \mathscr{F}_x$ is zero. On the other hand, because $\mathscr{O}_X(U)$ is a domain and $\mathscr{F}(U)$ is torsion-free, the morphism $\mathscr{F}(U) \rightarrow \mathscr{F}_x$ is injective (\cite[\href{https://stacks.math.columbia.edu/tag/0AXS}{Lemma 0AXS}]{stacks-project}). As a result the composition $C \rightarrow \mathscr{O}_X(U)\rightarrow \mathscr{F}(U)$ is zero. But then, by the universal property of the kernel $\mathscr{O}_X^{\mathscr{F}}(U)$, there exists a unique morphism of abelian groups $C \rightarrow \mathscr{O}_X^{\mathscr{F}}(U)$.
By Lemma \ref{lem:morphism_groups_rings}, this is a morphism of rings.
\end{proof}

\begin{lemma}[Localisation of first integrals]
\label{lem:localisation_well_defined}
Let $k$ be a field and let $(X, \mathscr{F})$ be an integral directed space over $k$.
Suppose that $\mathscr{F}$ is torsion-free.
Let $x \in X$, then
$$ \mathscr{O}_{X,x}^{\mathscr{F}} = \lim_{\substack{\longrightarrow \\ U \ni x}} \mathscr{O}_X^{\mathscr{F}}(U),$$
where the limit is taken over all affine open subsets containing $x$.
\end{lemma}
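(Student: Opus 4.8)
The plan is to construct the natural comparison homomorphism from the colimit to $\mathscr{O}_{X,x}^{\mathscr{F}}$ and to show that it is an isomorphism. First I would record that the affine open neighbourhoods of $x$, ordered by reverse inclusion, form a filtered poset: given two of them, $U_1$ and $U_2$, their intersection is an open neighbourhood of $x$ and hence contains an affine open neighbourhood of $x$; the same observation shows these neighbourhoods are cofinal among all open neighbourhoods of $x$, so $\varinjlim_{U \ni x} \mathscr{O}_X(U) = \mathscr{O}_{X,x}$. The ring homomorphisms $\mathscr{O}_X^{\mathscr{F}}(U) \to \mathscr{O}_{X,x}^{\mathscr{F}}$ supplied by Lemma \ref{lem:commutative_description} are compatible with restriction, hence assemble into a cocone and induce the comparison map $\varinjlim_{U \ni x} \mathscr{O}_X^{\mathscr{F}}(U) \to \mathscr{O}_{X,x}^{\mathscr{F}}$.

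To see that this map is an isomorphism I would invoke Lemma \ref{lem:cartesian_description}: for every affine open $U \ni x$ the commutative square relating $\mathscr{O}_X^{\mathscr{F}}(U)$, $\mathscr{O}_X(U)$, $\mathscr{O}_{X,x}^{\mathscr{F}}$ and $\mathscr{O}_{X,x}$ is Cartesian in the category of rings, that is, $\mathscr{O}_X^{\mathscr{F}}(U) = \mathscr{O}_X(U) \times_{\mathscr{O}_{X,x}} \mathscr{O}_{X,x}^{\mathscr{F}}$. Passing to the filtered colimit over $U \ni x$, and using that in the category of rings filtered colimits commute with finite limits (in particular with this pullback), while $\mathscr{O}_{X,x}$ and $\mathscr{O}_{X,x}^{\mathscr{F}}$ are constant in the system, yields
$$ \varinjlim_{U \ni x} \mathscr{O}_X^{\mathscr{F}}(U) \;=\; \Bigl( \varinjlim_{U \ni x} \mathscr{O}_X(U) \Bigr) \times_{\mathscr{O}_{X,x}} \mathscr{O}_{X,x}^{\mathscr{F}} \;=\; \mathscr{O}_{X,x} \times_{\mathscr{O}_{X,x}} \mathscr{O}_{X,x}^{\mathscr{F}} \;=\; \mathscr{O}_{X,x}^{\mathscr{F}}, $$
and unwinding the colimit structure maps shows that the resulting isomorphism is exactly the comparison map above.

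As a more hands-on alternative, one can argue directly. Injectivity is immediate: filtered colimits are exact, so the inclusions $\mathscr{O}_X^{\mathscr{F}}(U) \hookrightarrow \mathscr{O}_X(U)$ induce an inclusion $\varinjlim_{U \ni x} \mathscr{O}_X^{\mathscr{F}}(U) \hookrightarrow \varinjlim_{U \ni x} \mathscr{O}_X(U) = \mathscr{O}_{X,x}$, and this factors through the inclusion $\mathscr{O}_{X,x}^{\mathscr{F}} \hookrightarrow \mathscr{O}_{X,x}$. For surjectivity, given $g \in \mathscr{O}_{X,x}^{\mathscr{F}} \subseteq \mathscr{O}_{X,x}$, choose an affine open $U \ni x$ and a lift $\tilde g \in \mathscr{O}_X(U)$. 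Then $d_{\mathscr{F}} \tilde g \in \mathscr{F}(U)$ has germ $d_{\mathscr{F},x}(g) = 0$ at $x$; since $\mathscr{F}(U) \to \mathscr{F}_x$ is injective (as in the proof of Lemma \ref{lem:cartesian_description}, using that $X$ is integral and $\mathscr{F}$ is torsion-free), it follows that $d_{\mathscr{F}} \tilde g = 0$, so $\tilde g \in \mathscr{O}_X^{\mathscr{F}}(U)$ and its class in the colimit maps to $g$.

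The real content is all in the already-established results; the points to be careful about are purely formal. In the slick argument one must justify that filtered colimits commute with the finite limit defining the pullback in $\mathbf{Ring}$: this holds because the forgetful functor to $\mathbf{Set}$ creates filtered colimits and all limits, and filtered colimits commute with finite limits in $\mathbf{Set}$. One also needs that the poset of affine open neighbourhoods of $x$ is filtered and cofinal, so that the colimit over it genuinely computes the stalk. In the hands-on argument the one nontrivial input is the injectivity of $\mathscr{F}(U) \to \mathscr{F}_x$, which is precisely where the torsion-freeness and integrality hypotheses enter.
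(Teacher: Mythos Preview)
Your proof is correct and your main argument (the ``slick'' one via Lemma~\ref{lem:cartesian_description} and commutation of filtered colimits with finite limits) is exactly the paper's proof. The additional setup of the comparison map and the hands-on alternative are correct extras not present in the paper.
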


\begin{proof}
Firstly, recall that, in the category of rings, filtered colimits commute with finite limits (\cite[\href{https://stacks.math.columbia.edu/tag/002W}{Lemma 002W}]{stacks-project}).
Now, let $U$ be an affine open subset of $X$ containing $x$. By Lemma \ref{lem:cartesian_description}, it follows that
\begin{align}
\label{eq:limit_open_set}
\mathscr{O}_X^{\mathscr{F}}(U) = \mathscr{O}_X(U) \times_{\mathscr{O}_{X,x}}
\mathscr{O}_{X,x}^{\mathscr{F}}.
\end{align}
Now, taking the limit over all affine open subsets yields
\begin{align*}
\lim_{\substack{\longrightarrow \\ U \ni x}} \mathscr{O}_X^{\mathscr{F}}(U)
&=
\lim_{\substack{\longrightarrow \\ U \ni x}} \left( \mathscr{O}_X(U) \times_{\mathscr{O}_{X,x}}
\mathscr{O}_{X,x}^{\mathscr{F}} \right)
& \text{(\ref{eq:limit_open_set})} \\
&=
\left( \lim_{\substack{\longrightarrow \\ U \ni x}} \mathscr{O}_X(U) \right) \times_{\mathscr{O}_{X,x}}
\mathscr{O}_{X,x}^{\mathscr{F}}
& \substack{\text{commutativity} \\ \text{of limits and colimits}} \\
&=
\mathscr{O}_{X,x} \times_{\mathscr{O}_{X,x}}
\mathscr{O}_{X,x}^{\mathscr{F}} & \\
&= \mathscr{O}_{X,x}^{\mathscr{F}}. & 
\end{align*}
\end{proof}

\subsection{Algebraic Integrability}
\label{subsec:algebraic_integrability}

This subsection defines algebraic integrability of a distribution in terms of the dimension of its first integrals and proves a well-known characterisation (Lemma \ref{lem:traditional_algebraic_integrability}).
The material in this section is inspired by the main results of \cite{MR1017286}.

\begin{definition}[Rank and corank]
\label{def:rank_corank}
Let $k$ be a field and let $(X, \mathscr{F})$ be an integral directed space locally of finite type over $k$.
Let $\eta$ be the generic point of $X$.
By Remark \ref{rem:coherence_distributions}, $\Omega_{X/k}^1$ and $\mathscr{F}$ are coherent $\mathscr{O}_X$-modules. The \emph{rank of $\mathscr{F}$} is defined to be its rank as a coherent $\mathscr{O}_X$-module. The \emph{corank of $\mathscr{F}$} is defined to be the difference between the rank of $\Omega_{X/k}^1$ and the rank of $\mathscr{F}$.
\end{definition}

\begin{lemma}[Dimension of first integrals]
\label{lem:dim_first_integrals}
Let $k$ be a field of characteristic zero and let $(X, \mathscr{F})$ be an integral directed space locally of finite type over $k$.
Let $\eta$ be the generic point of $X$, then
$$ \mathrm{rank}\,\mathscr{F} \leq \mathrm{tr\,deg}_{\mathscr{O}_{X,\eta}^{\mathscr{F}}} \mathscr{O}_{X,\eta}.$$
\end{lemma}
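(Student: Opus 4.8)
The plan is to work entirely at the generic point and turn everything into linear algebra over the function field. Write $K = \mathscr{O}_{X,\eta}$ and $L = \mathscr{O}_{X,\eta}^{\mathscr{F}}$. First I would note that $L$ is a subfield of $K$: by Lemma \ref{lem:morphism_local_rings} it is a local ring, and its maximal ideal is the preimage of the maximal ideal of $\mathscr{O}_{X,\eta}$, which is $(0)$; hence $L$ is a field and $\mathrm{tr\,deg}_{L}K$ is the transcendence degree of the field extension $K/L$. Since $X$ is locally of finite type over $k$, the extension $K/k$ is finitely generated, and since $k \subseteq L$ the extension $K/L$ is finitely generated as well.

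Next I would record the relevant dimensions. By Lemma \ref{lem:cotangent_localisation} one has $\Omega_{X/k,\eta}^{1} = \Omega_{K/k}^{1}$, with $d_\eta = d_{K/k}$, so that $d_{\mathscr{F},\eta}$ is the composite
$$ K \xrightarrow{\ d\ } \Omega_{K/k}^{1} \xrightarrow{\ q_\eta\ } \mathscr{F}_\eta, $$
where $q_\eta$ is the (still surjective) generic fibre of the quotient map $q \colon \Omega_{X/k}^1 \to \mathscr{F}$. By Definition \ref{def:rank_corank}, $\dim_K \Omega_{K/k}^{1} = \mathrm{rank}\,\mathscr{F} + \mathrm{corank}\,\mathscr{F}$ and $\dim_K \mathscr{F}_\eta = \mathrm{rank}\,\mathscr{F}$, so $N := \ker q_\eta$ is a $K$-subspace of $\Omega_{K/k}^{1}$ with $\dim_K N = \mathrm{corank}\,\mathscr{F}$. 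Moreover, unravelling Definition \ref{def:local_first_integrals}, $L = \{\, f \in K : d f \in N \,\}$.

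Then I would invoke the second fundamental exact sequence of cotangent modules for the tower $k \to L \to K$:
$$ \Omega_{L/k}^{1} \otimes_L K \xrightarrow{\ \alpha\ } \Omega_{K/k}^{1} \xrightarrow{\ \beta\ } \Omega_{K/L}^{1} \to 0. $$
Since $\Omega_{L/k}^{1}$ is generated over $L$ by the exact differentials $d f$ with $f \in L$, and $\alpha$ sends $d f \otimes 1$ to $d f$, the image of $\alpha$ is the $K$-span of $\{\, d f : f \in L \,\}$; by the description of $L$ above, this span is contained in $N$, so $\dim_K \mathrm{im}\,\alpha \leq \mathrm{corank}\,\mathscr{F}$. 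Exactness of the sequence then gives
$$ \dim_K \Omega_{K/L}^{1} \;=\; \dim_K \Omega_{K/k}^{1} - \dim_K \mathrm{im}\,\alpha \;\geq\; \bigl(\mathrm{rank}\,\mathscr{F} + \mathrm{corank}\,\mathscr{F}\bigr) - \mathrm{corank}\,\mathscr{F} \;=\; \mathrm{rank}\,\mathscr{F}. $$
Finally, since $\mathrm{char}\,k = 0$, the finitely generated extension $K/L$ is separably generated, so $\dim_K \Omega_{K/L}^{1} = \mathrm{tr\,deg}_{L}K$; combining this with the inequality above yields $\mathrm{rank}\,\mathscr{F} \leq \mathrm{tr\,deg}_{\mathscr{O}_{X,\eta}^{\mathscr{F}}} \mathscr{O}_{X,\eta}$.

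The argument is short, and I expect no serious obstacle beyond bookkeeping. The two points that need care are the identification of $\mathrm{im}\,\alpha$ with $\mathrm{span}_K\{\,d f : f \in L\,\}$ and its containment in $N$ (this is precisely where the defining property of first integrals is used), together with the characteristic-zero input that upgrades the bound $\dim_K \Omega_{K/L}^{1} \leq \mathrm{tr\,deg}_{L}K$, valid in general, to an equality.
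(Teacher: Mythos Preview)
Your proof is correct and follows essentially the same route as the paper's own argument: both localise at $\eta$, invoke the cotangent exact sequence for $k \to L \to K$, use that the $K$-derivation $L \to \mathscr{F}_\eta$ vanishes (equivalently, $\mathrm{im}\,\alpha \subseteq \ker q_\eta$) to obtain a surjection $\Omega_{K/L}^1 \twoheadrightarrow \mathscr{F}_\eta$, and finish with the characteristic-zero identification $\dim_K \Omega_{K/L}^1 = \mathrm{tr\,deg}_L K$. The only cosmetic difference is that you phrase the middle step as a dimension count on $\mathrm{im}\,\alpha$ via $\mathrm{corank}\,\mathscr{F}$, whereas the paper goes directly through the universal property of the cokernel to produce the surjection; these are the same computation read two ways.
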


\begin{proof}
It is firstly shown that there is a surjection of $\mathscr{O}_{X,\eta}$-vector spaces
\begin{align}
\label{eq:dimension_surjection}
\Omega_{\mathscr{O}_{X,\eta}/\mathscr{O}_{X,\eta}^{\mathscr{F}}}^1 \rightarrow \mathscr{F} \otimes_{\mathscr{O}_X} \kappa(\eta) \rightarrow 0.
\end{align}
The relative cotangent exact sequence for $k \rightarrow \mathscr{O}_{X,\eta}^{\mathscr{F}} \rightarrow \mathscr{O}_{X,\eta}$ gives that 
$$ \Omega_{\mathscr{O}_{X,\eta}/\mathscr{O}_{X,\eta}^{\mathscr{F}}}^1 = \mathrm{coker} \left( \Omega_{\mathscr{O}_{X,\eta}^{\mathscr{F}}/k}^1 \otimes_{\mathscr{O}_X} \kappa(\eta) \rightarrow \Omega_{\mathscr{O}_{X,\eta}/k}^1 \right).$$
Hence, it is enough to show that the composition 
$$ \Omega_{\mathscr{O}_{X,\eta}^{\mathscr{F}}/k}^1 \otimes_{\mathscr{O}_X} \kappa(\eta) \rightarrow \Omega_{\mathscr{O}_{X,\eta}/k}^1 = \Omega_{X/k}^1 \otimes_{\mathscr{O}_X} \kappa(\eta) \rightarrow \mathscr{F} \otimes_{\mathscr{O}_X} \kappa(\eta)$$
is zero. This follows because, by construction, the $k$-derivation
$$ \mathscr{O}_{X,\eta}^{\mathscr{F}} \rightarrow \mathscr{F} \otimes_{\mathscr{O}_X} \kappa(\eta)$$
is zero, hence by the universal property of the sheaf of differentials, the morphism 
$$ \Omega_{\mathscr{O}_{X,\eta}^{\mathscr{F}}/k}^1 \otimes_{\mathscr{O}_X} \kappa(\eta) \rightarrow 
\mathscr{F} \otimes_{\mathscr{O}_X} \kappa(\eta) $$
is zero. \par

Now, by (\ref{eq:dimension_surjection}), it is clear that
$$ \mathrm{dim}_{\kappa(\eta)}\, \mathscr{F} \otimes_{\mathscr{O}_X} \kappa(\eta) = \mathrm{rank}\, \mathscr{F} \leq \mathrm{dim}_{\kappa(\eta)}\, \Omega_{\mathscr{O}_{X,\eta}/\mathscr{O}_{X,\eta}^{\mathscr{F}}}^1.$$
But since $k$ is a field of characteristic zero and $\kappa(\eta)$ is finitely generated over $k$, $\mathscr{O}_{X,\eta}^{\mathscr{F}} \rightarrow \mathscr{O}_{X,\eta}$ is a separably generated field extension (Lemma \ref{lem:morphism_local_rings}).
Hence
$$ \mathrm{tr\,deg}_{\mathscr{O}_{X,\eta}^{\mathscr{F}}} \mathscr{O}_{X,\eta} = \mathrm{dim}_{\kappa(\eta)}\, \Omega_{\mathscr{O}_{X,\eta}/\mathscr{O}_{X,\eta}^{\mathscr{F}}}^1.$$
This proves the claim.
\end{proof}

\begin{definition}[Algebraically integrable distributions]
\label{def:algebraically_integrable}
Let $k$ be a field of characteristic zero and let $(X, \mathscr{F})$ be an integral directed space locally of finite type over $k$.
\emph{$\mathscr{F}$ is algebraically integrable} if 
$$ \mathrm{rank}\,\mathscr{F} = \mathrm{tr\,deg}_{\mathscr{O}_{X,\eta}^{\mathscr{F}}} \mathscr{O}_{X,\eta}.$$
\end{definition}

\begin{lemma}[Characterisation of algebraic integrability]
\label{lem:traditional_algebraic_integrability}
Let $k$ be a field of characteristic zero and let $(X, \mathscr{F})$ be an integral directed space locally of finite type over $k$.
Let $\eta$ be the generic point of $X$.
$\mathscr{F}$ is algebraically integrable if and only if there exists an affine open set $\iota : U \rightarrow X$ such that the induced $k$-morphism
$$ \pi : U = \mathrm{Spec}\, \Gamma \left( U, \mathscr{O}_X \right) \rightarrow \mathrm{Spec}\, \Gamma \left( U, \mathscr{O}_X^{\mathscr{F}} \right) = V $$
gives an isomorphism of locally free sheaves
$$ \Omega_{U/V}^1 \xrightarrow{\sim} \iota^* \mathscr{F}.$$
\end{lemma}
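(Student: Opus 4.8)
The plan is to reduce both implications to the generic point $\eta$ and to play them against Lemma~\ref{lem:dim_first_integrals}. Write $L = \mathscr{O}_{X,\eta}$ for the function field of $X$ and $L^{\mathscr{F}} = \mathscr{O}_{X,\eta}^{\mathscr{F}}$; by Lemma~\ref{lem:morphism_local_rings}, $L^{\mathscr{F}}$ is a local ring with maximal ideal $\mathfrak{n}_\eta = \{ f \in \mathfrak{m}_\eta : d_{\mathscr{F},\eta} f = 0 \} = 0$, hence a field, and $k \subseteq L^{\mathscr{F}} \subseteq L$. Throughout I will use freely that a subfield of a field extension finitely generated over $k$ is again finitely generated over $k$ (so $L^{\mathscr{F}}/k$ is finitely generated, $L$ being the function field of a variety), and that in characteristic zero every finitely generated field extension is separably generated, so that $\dim_E \Omega_{E/F}^1 = \mathrm{tr\,deg}_F E$ for any tower $k \subseteq F \subseteq E$ with $E/k$ finitely generated.

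For $(\Leftarrow)$, suppose such an affine open $\iota : U \rightarrow X$ exists, and set $B = \Gamma(U, \mathscr{O}_X)$ and $A = \Gamma(U, \mathscr{O}_X^{\mathscr{F}})$, so $V = \mathrm{Spec}\, A$ and $\eta \in U$ (as $X$ is integral and $U$ non-empty). Since $X$ is integral, $A \subseteq B \hookrightarrow L$, and by Lemma~\ref{lem:commutative_description} the inclusion $A \rightarrow L$ factors through $L^{\mathscr{F}}$; hence $\mathrm{Frac}(A) \subseteq L^{\mathscr{F}}$. Taking the stalk at $\eta$ of the given isomorphism $\Omega_{U/V}^1 \xrightarrow{\sim} \iota^*\mathscr{F}$, and using that differentials commute with localisation and that localising the base kills its differentials — so that $(\Omega_{U/V}^1)_\eta = \Omega_{L/A}^1 = \Omega_{L/\mathrm{Frac}(A)}^1$ — yields $\Omega_{L/\mathrm{Frac}(A)}^1 \cong \mathscr{F}_\eta$, an $L$-vector space of dimension $\mathrm{rank}\,\mathscr{F}$. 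Therefore $\mathrm{tr\,deg}_{\mathrm{Frac}(A)} L = \mathrm{rank}\,\mathscr{F}$, and since $\mathrm{Frac}(A) \subseteq L^{\mathscr{F}}$, Lemma~\ref{lem:dim_first_integrals} gives
$$ \mathrm{rank}\,\mathscr{F} = \mathrm{tr\,deg}_{\mathrm{Frac}(A)} L \ \geq\ \mathrm{tr\,deg}_{L^{\mathscr{F}}} L \ \geq\ \mathrm{rank}\,\mathscr{F}, $$
forcing equality throughout; in particular $\mathscr{F}$ is algebraically integrable.

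For $(\Rightarrow)$, assume $\mathrm{rank}\,\mathscr{F} = \mathrm{tr\,deg}_{L^{\mathscr{F}}} L$. The surjection $\Omega_{L/L^{\mathscr{F}}}^1 \rightarrow \mathscr{F}_\eta$ exhibited in~(\ref{eq:dimension_surjection}) is then a surjection of $L$-vector spaces of common dimension $\mathrm{rank}\,\mathscr{F}$, hence an isomorphism. Write $L^{\mathscr{F}} = k(a_1, \dots, a_N)$; each $a_i \in L^{\mathscr{F}} \subseteq L$ is the germ at $\eta$ of a section of $\mathscr{O}_X$ over some affine open containing $\eta$, and $d_{\mathscr{F}} a_i$ has zero germ at $\eta$, hence vanishes over a smaller affine open, so after intersecting finitely many such opens there is an affine open $U_0 \ni \eta$ with $a_1, \dots, a_N \in \Gamma(U_0, \mathscr{O}_X^{\mathscr{F}})$. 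Put $A_0 = k[a_1, \dots, a_N] \subseteq \Gamma(U_0, \mathscr{O}_X^{\mathscr{F}})$, so $\mathrm{Frac}(A_0) = L^{\mathscr{F}}$, and $W_0 = \mathrm{Spec}\, A_0$. Since the $a_i$ are first integrals on $U_0$, the canonical surjection $\Omega_{U_0/k}^1 \rightarrow \mathscr{F}|_{U_0}$ factors through a morphism $\phi : \Omega_{U_0/W_0}^1 \rightarrow \mathscr{F}|_{U_0}$, $db \mapsto d_{\mathscr{F}} b$, whose stalk at $\eta$ is the isomorphism $\Omega_{L/L^{\mathscr{F}}}^1 \cong \mathscr{F}_\eta$. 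As $\mathscr{O}_X(U_0)$ is Noetherian, $\Omega_{U_0/W_0}^1$ and $\mathscr{F}|_{U_0}$ are coherent, so $\ker \phi$ and $\mathrm{coker}\, \phi$ are coherent and vanish at $\eta$, hence are supported on proper closed subsets of $U_0$; likewise $\mathscr{F}$, being free of rank $\mathrm{rank}\,\mathscr{F}$ at $\eta$, is free of that rank over a neighbourhood of $\eta$. Choosing an affine open $U \subseteq U_0$ with $\eta \in U$ that avoids all these loci, $\iota : U \rightarrow X$, one gets that $\mathscr{F}|_U$ is free of rank $\mathrm{rank}\,\mathscr{F}$ and $\phi|_U : \Omega_{U/W_0}^1 \xrightarrow{\sim} \mathscr{F}|_U$.

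Finally, put $V = \mathrm{Spec}\, \Gamma(U, \mathscr{O}_X^{\mathscr{F}})$. Since $A_0 \subseteq \Gamma(U, \mathscr{O}_X^{\mathscr{F}})$, the morphism $\pi : U \rightarrow V$ factors as $U \rightarrow V \rightarrow W_0$, so $\Omega_{U/W_0}^1 \rightarrow \Omega_{U/V}^1$ is surjective; moreover $\Omega_{U/V}^1 \rightarrow \iota^*\mathscr{F}$ is surjective, since the submodule of $\Omega_{U/k}^1$ generated by $\{ da : a \in \Gamma(U, \mathscr{O}_X^{\mathscr{F}}) \}$ dies in $\mathscr{F}|_U$. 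Their composite $\Omega_{U/W_0}^1 \rightarrow \Omega_{U/V}^1 \rightarrow \iota^*\mathscr{F}$ is $\phi|_U$, an isomorphism; hence both surjections are isomorphisms, and in particular $\Omega_{U/V}^1 \xrightarrow{\sim} \iota^*\mathscr{F}$ with both sheaves locally free of rank $\mathrm{rank}\,\mathscr{F}$, as required. The routine points — compatibility of $d$ and $d_{\mathscr{F}}$ with localisation and restriction, coherence of the sheaves involved, openness of the free locus of a coherent sheaf — I would leave standard. The step carrying the weight is the spreading-out in $(\Rightarrow)$, upgrading the generic isomorphism $\Omega_{L/L^{\mathscr{F}}}^1 \cong \mathscr{F}_\eta$ to an isomorphism of coherent sheaves over a small open set; this works precisely because $L^{\mathscr{F}}/k$ is finitely generated, so a genuine finite-type model $W_0$ of $\mathrm{Spec}\, L^{\mathscr{F}}$ can be interposed, while the nuisance that $V$ must be built from the full ring of first integrals rather than from $A_0$ is absorbed by the observation that a composite of surjections which is an isomorphism forces each factor to be one.
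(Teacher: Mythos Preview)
Your proof is correct and follows the same overall strategy as the paper's: spread out finitely many generators of $L^{\mathscr{F}}$ to an affine open, obtain a surjection of coherent sheaves that is an isomorphism at $\eta$, shrink to an open where it becomes a global isomorphism, and finally pass from the auxiliary base $A_0$ to the full ring of first integrals via the factorisation $\Omega_{U/W_0}^1 \twoheadrightarrow \Omega_{U/V}^1 \twoheadrightarrow \iota^*\mathscr{F}$. Your spreading-out step is slightly more direct---arguing with the supports of $\ker\phi$ and $\mathrm{coker}\,\phi$ rather than invoking generic flatness, and finding $U_0$ by a bare stalk argument rather than via Lemma~\ref{lem:localisation_well_defined} (which would need $\mathscr{F}$ torsion-free)---but the architecture is the same.
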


\begin{proof}
\textbf{($\rightarrow$)}.
Firstly, it is shown that there exists an open set $U \subseteq X$ such that
$$ \Gamma \left( U, \mathscr{O}_X^{\mathscr{F}} \right) \rightarrow \mathscr{O}_{X, \eta} $$
is a localisation.
In other words, $\mathscr{O}_{X,\eta}^{\mathscr{F}}$ is the field of fractions of $\mathscr{O}_X^{\mathscr{F}}(U)$.
By Lemma \ref{lem:morphism_local_rings}, $\mathscr{O}_{X,\eta}^{\mathscr{F}}$ is a subfield of $\mathscr{O}_{X,\eta}$. Since $X$ is locally of finite type over $k$, $\mathscr{O}_{X,\eta}$ is finitely generated as a field over $k$.
This implies that $\mathscr{O}_{X,\eta}^{\mathscr{F}}$ is also finitely generated as field over $k$. 
Let $\{f_i\}_{i \in I}$ be a finite set of generators. 
By Lemma \ref{lem:localisation_well_defined}, there exists an affine open subset $U \subseteq X$ such that $f_i \in \mathscr{O}_X^{\mathscr{F}}(U)$ for all $i \in I$. 
But then the field of fractions of $\mathscr{O}_X^{\mathscr{F}}(U)$ is $\mathscr{O}_{X,\eta}^{\mathscr{F}}$. 
Up to replacing $X$ by $U$ and noting that the restriction of an algebraically integrable distribution to a non-empty open subset remains algebraically integrable, it may be assumed that $X$ is affine and $\mathscr{O}_{X,\eta}^{\mathscr{F}}$ is the field of fractions of $\Gamma \left( X, \mathscr{O}_X^{\mathscr{F}} \right)$.
Let $Y = \mathrm{Spec} \, \Gamma \left( X, \mathscr{O}_X^{\mathscr{F}} \right)$. 
Using Lemma \ref{lem:invariant_complex}, it follows that the induced morphism $\pi^{\prime} : X \rightarrow Y$ is $\mathscr{F}$-invariant, hence there is a surjection
\begin{align}
\label{eq:almost_isomorphic}
\Omega_{X/Y}^1 \rightarrow \mathscr{F}.
\end{align}
Since $\mathscr{F}$ is algebraically integrable and $\mathscr{O}_{X,\eta}^{\mathscr{F}}$ is the field of fractions of $\Gamma \left( X, \mathscr{O}_X^{\mathscr{F}} \right)$,
\begin{align*}
\mathrm{rank}\, \Omega_{X/Y}^1
&= \mathrm{dim}_{\kappa(\eta)} \Omega_{X/Y}^1 \otimes_{\mathscr{O}_X} \kappa(\eta) & \\
&= \mathrm{dim}_{\kappa(\eta)} \Omega_{\mathscr{O}_{X, \eta}/\mathscr{O}_{X, \eta}^{\mathscr{F}}}^1
& \text{(Lemma \ref{lem:cotangent_localisation})} \\
&= \mathrm{tr\,deg}_{\mathscr{O}_{X, \eta}^{\mathscr{F}}} \mathscr{O}_{X,\eta} & \text{($\mathscr{O}_{X, \eta}^{\mathscr{F}} \subseteq \mathscr{O}_{X,\eta}$ separably generated)} \\
&= \mathrm{rank}\, \mathscr{F} & \text{($\mathscr{F}$ algebraically integrable).} 
\end{align*}

Let $\mathscr{K}$ be the kernel of the surjection in (\ref{eq:almost_isomorphic}). Since $\Omega_{X/Y}^1$ is coherent (Remark \ref{rem:coherence_distributions}), $\mathscr{K}$ is coherent and its rank is zero.
Now, by generic flatness (\cite[\href{https://stacks.math.columbia.edu/tag/052B}{Proposition 052B}]{stacks-project}) applied to the identity $X \rightarrow X$, there exists a dense open set $U \subseteq X$ such that $\Omega_{X/Y}^1|_U$ is flat over $U$.
Since $\Omega_{X/Y}^1$ is coherent, it is locally free over $U$ (\cite[\href{https://stacks.math.columbia.edu/tag/05P2}{Lemma 05P2}]{stacks-project}).
But then $\mathscr{K}|_U$ is a rank zero subsheaf of a locally free sheaf, hence it has to be zero and 
$$\Omega_{U/Y}^1 = \Omega_{X/Y}^1|_U \rightarrow \iota^{\#} \mathscr{F}$$
is an isomorphism.
Lastly, up to replacing $U$ by an affine subset, assume $U$ is affine. Let $V = \mathrm{Spec} \, \Gamma \left( U, \mathscr{O}_X^{\mathscr{F}} \right)$ and let $\pi : U \rightarrow V$ be the induced morphism.
Using the cotangent exact sequence for $U \rightarrow V \rightarrow Y$ yields a surjection
\begin{align}
\label{eq:cotangent_restriction}
\iota^{\#} \mathscr{F} = \Omega_{U/Y}^1 \rightarrow \Omega_{U/V}^1.
\end{align}
But again by Lemma \ref{lem:invariant_complex}, $\pi$ is $\iota^{\#} \mathscr{F}$-invariant and there is a surjection
$$ \Omega_{U/V}^1 \rightarrow \iota^{\#} \mathscr{F} $$
which, in light of (\ref{eq:cotangent_restriction}), must be an isomorphism of locally free sheaves. \par

\textbf{($\leftarrow$)}.
For ease of notation, let $M$ denote the field of fractions of $\Gamma \left(U, \mathscr{O}_X^{\mathscr{F}} \right)$.
To prove that $\mathscr{F}$ is algebraically integrable, it is enough to show that 
$$ \mathrm{tr\, deg}_{M} \mathscr{O}_{X,\eta} = \mathrm{rank}\,\mathscr{F}.$$
Indeed, since $M \subseteq \mathscr{O}_{X,\eta}^{\mathscr{F}}$ is a field extension,
$$ \mathrm{tr\, deg}_{M} \mathscr{O}_{X,\eta} \geq \mathrm{tr\, deg}_{\mathscr{O}_{X,\eta}^{\mathscr{F}}} \mathscr{O}_{X,\eta}.$$
Now, localising the isomorphism $\Omega_{U/V}^1 \rightarrow \iota^* \mathscr{F}$ at the generic point $\eta$ gives an isomorphism
$$ \Omega_{U/V}^1 \otimes_{\mathscr{O}_U} \kappa(\eta) = \Omega_{\mathscr{O}_{X,\eta}/M} \xrightarrow{\sim} \mathscr{F} \otimes_{\mathscr{O}_U} \kappa(\eta) = \mathscr{F} \otimes_{\mathscr{O}_X} \kappa(\eta).$$
Finally since the field extension $M \subseteq \mathscr{O}_{X,\eta}$ is separably generated,
$$ \mathrm{dim}_{\kappa(\eta)} \Omega_{U/V}^1 \otimes_{\mathscr{O}_U} \kappa(\eta) = \mathrm{tr\,deg}_{M} \mathscr{O}_{X,\eta}.$$
This proves the result.
\end{proof}

\begin{lemma}[Algebraically integrable distributions are foliations]
\label{lem:alg_integrable_distribution}
Let $k$ be a field of characteristic zero and let $(X, \mathscr{F})$ be an integral directed space locally of finite type over $k$.
Suppose that $\mathscr{F}$ is algebraically integrable.
Then $\mathscr{F}$ is a foliation.
\end{lemma}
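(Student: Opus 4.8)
The plan is to exhibit a dense open subset of $X$ over which the total exterior derivative $d$ descends to $\Lambda^{\bullet}\mathscr{F}$, which is exactly the involutivity condition of Definition \ref{def:foliations}. Since $\mathscr{F}$ is algebraically integrable, Lemma \ref{lem:traditional_algebraic_integrability} provides an affine open $\iota : U \rightarrow X$ together with the induced $k$-morphism $\pi : U \rightarrow V = \mathrm{Spec}\,\Gamma(U, \mathscr{O}_X^{\mathscr{F}})$ for which the natural map $\Omega_{U/V}^1 \rightarrow \iota^{\#}\mathscr{F}$ is an isomorphism of locally free sheaves. As $X$ is integral, $U$ is dense in $X$, so it suffices to construct $d_{\mathscr{F}}$ over $U$.

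First I would identify the kernel of the quotient $q : \Omega_{U/k}^1 \rightarrow \iota^{\#}\mathscr{F}$, which by Lemma \ref{lem:etale_pullback} is the restriction map $\Omega_{U/k}^1 \rightarrow \mathscr{F}|_U$. Since $\pi$ is $\iota^{\#}\mathscr{F}$-invariant (as in the proof of Lemma \ref{lem:traditional_algebraic_integrability}), $q$ factors as $\Omega_{U/k}^1 \xrightarrow{\,p\,} \Omega_{U/V}^1 \rightarrow \iota^{\#}\mathscr{F}$ through the natural surjection $p$ of the relative cotangent sequence, and as $\Omega_{U/V}^1$ and $\iota^{\#}\mathscr{F}$ are locally free of the same rank the second map is an isomorphism; hence $\ker q = \ker p =: K$. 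By the relative cotangent sequence $K$ is the image of $\pi^{*}\Omega_{V/k}^1 \rightarrow \Omega_{U/k}^1$, that is, the $\mathscr{O}_U$-submodule generated locally by the differentials $da$ of first integrals $a \in \Gamma(U, \mathscr{O}_X^{\mathscr{F}})$. Passing to exterior powers and using \cite[\href{https://stacks.math.columbia.edu/tag/01CJ}{Lemma 01CJ}]{stacks-project}, as in Remark \ref{rem:uniqueness_exterior_derivative}, the kernel of $\Omega_{U/k}^n = \Lambda^n \Omega_{U/k}^1 \rightarrow \Lambda^n \mathscr{F}|_U$ is the image of $K \wedge \Omega_{U/k}^{n-1}$, so $\Lambda^{\bullet}\mathscr{F}|_U$ is canonically the quotient of $\Omega_{U/k}^{\bullet}$ by the graded submodule $K \wedge \Omega_{U/k}^{\bullet}$, namely the relative de Rham complex $\Omega_{U/V}^{\bullet}$.

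It then remains to check that $d$ maps $K \wedge \Omega_{U/k}^{\bullet}$ into itself, which is the standard fact that the relative de Rham complex is a quotient complex of the absolute one. A local section of $K \wedge \Omega_{U/k}^{n-1}$ is a finite sum of terms $b\,da \wedge \omega$ with $b \in \mathscr{O}_U$, $a$ a first integral and $\omega \in \Omega_{U/k}^{n-1}$, and since $d(da) = 0$ one computes $d(b\,da \wedge \omega) = db \wedge da \wedge \omega \pm b\,da \wedge d\omega$, both summands of which lie in $da \wedge \Omega_{U/k}^{n} \subseteq K \wedge \Omega_{U/k}^{n}$. Therefore $d$ descends along $\Omega_{U/k}^{\bullet} \rightarrow \Lambda^{\bullet}\mathscr{F}|_U$ to an endomorphism $d_{\mathscr{F}}$, which is the relative de Rham differential $d_{U/V}$ under the identification above. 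Since $U$ is dense in $X$, Definition \ref{def:foliations} gives that $\mathscr{F}$ is involutive, hence a foliation. The only step requiring real care is the identification $\ker q = \ker p$: one must be sure the differential obtained really descends from $d$ along the structural quotient $\Omega_{U/k}^{\bullet} \rightarrow \Lambda^{\bullet}\mathscr{F}$, and not merely along a complex abstractly isomorphic to it; granting that, the Leibniz computation is immediate.
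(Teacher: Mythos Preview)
Your proof is correct and follows the same approach as the paper: invoke Lemma \ref{lem:traditional_algebraic_integrability} to identify $\mathscr{F}|_U$ with $\Omega_{U/V}^1$ on a dense open $U$, and then use that the relative de Rham complex $\Omega_{U/V}^{\bullet}$ carries its own exterior derivative. The paper's proof is the one-line version of this (``$\Omega_{U/V}^1$ has an exterior derivative''); you have carefully unpacked the point the paper leaves implicit, namely that the isomorphism $\Omega_{U/V}^1 \xrightarrow{\sim} \iota^{\#}\mathscr{F}$ provided by Lemma \ref{lem:traditional_algebraic_integrability} is the structural one (so $\ker q = \ker p$) and hence the relative differential $d_{U/V}$ really is the descent of $d$ along $\Omega_{U/k}^{\bullet} \rightarrow \Lambda^{\bullet}\mathscr{F}|_U$ required by Definition \ref{def:foliations}.
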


\begin{proof}
Pick an open set $\iota : U \rightarrow X$ and a morphism $\pi : U \rightarrow V$ satisfying the conclusions of Lemma \ref{lem:traditional_algebraic_integrability}. Note that $\Omega_{U/V}^1$ has an exterior derivative. This shows that $\mathscr{F}$ is involutive.
\end{proof}

\subsection{Finite Generation}
\label{subsec:finite_generation}

This subsection uses a theorem of Nagata (\cite{MR88034}) to show that the ring of first integrals is finitely generated when the corank of the distribution is less than or equal to two (Corollary \ref{cor:fin_gen}).
Remark \ref{rem:fin_gen} discusses open problems in this area.

\begin{definition}[Finitely generated first integrals]
\label{def:finite_first_integrals}
Let $k$ be a field and let $(X, \mathscr{F})$ be an integral directed space locally of finite type over $k$. \emph{The first integrals of $\mathscr{F}$ are locally of finite type over $k$} if for all affine open sets $U \subseteq X$, $\mathscr{O}_X^{\mathscr{F}}(U)$ is an algebra of finite type over $k$.
\end{definition}

\begin{corollary}[First integrals of low corank distribution are finitely generated]
\label{cor:fin_gen}
Let $k$ be a field of characteristic zero and let $(X, \mathscr{F})$ be a normal integral directed space locally of finite type over $k$.
Suppose that $\mathscr{F}$ is torsion-free and $\mathrm{corank}\,\mathscr{F} \leq 2$.
Then the first integrals of $\mathscr{F}$ are locally of finite type over $k$.
\end{corollary}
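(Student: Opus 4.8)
The plan is to realise the ring of first integrals over an affine open set as the intersection of a normal affine $k$-domain $B$ with a subfield $M$ of $\mathrm{Frac}(B)$ satisfying $\mathrm{tr\,deg}_k M \leq 2$, and then to quote the affirmative answer to the fourteenth problem of Hilbert in this range, due to Nagata (\cite{MR88034}).

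First, since the statement concerns affine open sets, I would fix an affine open $U \subseteq X$, write $B = \Gamma \left( U, \mathscr{O}_X \right)$, and let $\eta$ be the generic point of $X$, so that $L := \kappa(\eta) = \mathscr{O}_{X,\eta} = \mathrm{Frac}(B)$. As $X$ is normal and locally of finite type over $k$, the ring $B$ is a normal domain of finite type over $k$. Applying Lemma \ref{lem:cartesian_description} at the point $\eta \in U$ (this is where torsion-freeness of $\mathscr{F}$ enters) and using that $M := \mathscr{O}_{X,\eta}^{\mathscr{F}}$ is a subfield of $L$ containing $k$ (Lemma \ref{lem:morphism_local_rings} and Lemma \ref{lem:sheaf_algebras}), the Cartesian square collapses to an identification of subrings of $L$,
$$ \Gamma \left( U, \mathscr{O}_X^{\mathscr{F}} \right) = B \cap M. $$

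Next, I would bound $\mathrm{tr\,deg}_k M$. Since $k$ has characteristic zero, the finitely generated extension $L/k$ is separably generated, so $\mathrm{tr\,deg}_k L = \mathrm{dim}_L \Omega_{L/k}^1 = \mathrm{rank}\, \Omega_{X/k}^1$, the last equality by Lemma \ref{lem:cotangent_localisation} and the definition of the rank of a coherent sheaf. Lemma \ref{lem:dim_first_integrals} gives $\mathrm{rank}\, \mathscr{F} \leq \mathrm{tr\,deg}_M L$. Combining these with the additivity of transcendence degree in the tower $k \subseteq M \subseteq L$ yields
$$ \mathrm{tr\,deg}_k M = \mathrm{tr\,deg}_k L - \mathrm{tr\,deg}_M L \leq \mathrm{rank}\, \Omega_{X/k}^1 - \mathrm{rank}\, \mathscr{F} = \mathrm{corank}\, \mathscr{F} \leq 2. $$

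Finally, $B$ is a normal affine $k$-domain and $M$ is an intermediate field $k \subseteq M \subseteq \mathrm{Frac}(B)$ with $\mathrm{tr\,deg}_k M \leq 2$, so the theorem of Nagata (\cite{MR88034}) shows that $B \cap M = \Gamma \left( U, \mathscr{O}_X^{\mathscr{F}} \right)$ is of finite type over $k$; since $U$ was an arbitrary affine open, this is the assertion. The one substantial ingredient is the cited finiteness theorem, so within the present reduction the only step requiring care is the transcendence-degree estimate, which is exactly where the hypothesis $\mathrm{corank}\, \mathscr{F} \leq 2$ enters; note that it uses only the inequality of Lemma \ref{lem:dim_first_integrals}, so that neither involutivity nor algebraic integrability of $\mathscr{F}$ is needed.
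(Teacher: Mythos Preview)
Your proof is correct and follows essentially the same route as the paper: identify $\Gamma(U,\mathscr{O}_X^{\mathscr{F}}) = B \cap M$ via Lemma~\ref{lem:cartesian_description} at the generic point, bound $\mathrm{tr\,deg}_k M \leq \mathrm{corank}\,\mathscr{F} \leq 2$ using Lemma~\ref{lem:dim_first_integrals}, and conclude by Nagata's theorem. The only difference is cosmetic: you spell out the transcendence-degree estimate via additivity in the tower $k \subseteq M \subseteq L$ and the identification $\mathrm{tr\,deg}_k L = \mathrm{rank}\,\Omega_{X/k}^1$, whereas the paper states $\mathrm{tr\,deg}_k M \leq \mathrm{corank}\,\mathscr{F}$ directly as a consequence of Lemma~\ref{lem:dim_first_integrals}.
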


The result is an application of the following theorem of Nagata.

\begin{theorem}[Nagata's theorem]
\label{thm:fin_gen}
Let $k$ be a field and let $B$ be a normal integral finitely generated $k$-algebra.
Let $R(B)$ be the field of rational functions of $B$ and let $M$ be a subfield of $R(B)$ containing $k$. 
Suppose that
$$ \mathrm{tr\,deg}_k M \leq 2.$$
Then $A = B \cap M$ is a finitely generated $k$-algebra.
\end{theorem}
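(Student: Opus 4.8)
The plan is to realise $A$ geometrically and to argue by cases on $n = \mathrm{tr\,deg}_k M \in \{0,1,2\}$, following Zariski's algebro-geometric interpretation of Hilbert's fourteenth problem. First I would make two harmless reductions. Since $A = B \cap M \subseteq \mathrm{Frac}(A) \subseteq M$, every element of $B \cap M$ already lies in $\mathrm{Frac}(A)$, so $A = B \cap \mathrm{Frac}(A)$ and we may assume $M = \mathrm{Frac}(A)$. Moreover $A$ is normal: if $x \in M$ is integral over $A$ then it is integral over $B$, hence lies in $B$ by normality, and so in $B \cap M = A$. The case $n = 0$ is then immediate: $M$ is algebraic over $k$ and sits inside the finitely generated field $R(B)$, whose relative algebraic closure of $k$ is finite over $k$; thus $M$ is finite over $k$, every element of $M$ is integral over $k \subseteq B$, so $M \subseteq B$ and $A = M$ is finite over $k$.

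For $n = 1$ I would make decisive use of the finite generation of $B$. Choose $t \in A$ transcendental over $k$ (possible since $\mathrm{Frac}(A)$ is not algebraic over $k$) and let $\tilde A$ be the integral closure of $k[t]$ in $M$; it is module-finite over $k[t]$, hence a finitely generated Dedekind domain, and it lies in $A$ because its elements are integral over $B$. Writing $C$ for the smooth projective model of $M/k$ and $C^{\circ} = \mathrm{Spec}\,\tilde A$ for the corresponding affine open, every ring between the Dedekind domain $\tilde A$ and $M$ is an intersection of localisations, so $A = \bigcap_{p \in S}\mathcal{O}_{C,p}$ for some set $S$ of closed points of $C^{\circ}$. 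The inclusion $A \hookrightarrow B$ gives a dominant rational map $\Phi$ from $\mathrm{Spec}\,B$ to $C$ which, as $C$ is proper and $B$ is normal, is a morphism away from a closed subset of codimension $\ge 2$. Because $\Phi$ is dominant and of finite type onto a curve, all but finitely many closed points $p \in C$ are images of a height-one prime $\mathfrak{q}$, and for such $\mathfrak{q}$ one checks $B_\mathfrak{q} \cap M = \mathcal{O}_{C,p}$, forcing $p \in S$. Hence $C \setminus S$ is finite and nonempty (nonempty since otherwise $A$ would be the constants), and $A = \Gamma\big(C \setminus (C \setminus S), \mathcal{O}_C\big)$ is the coordinate ring of an affine curve, so finitely generated.

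The hard part is $n = 2$. The opening moves still apply: fix a normal projective surface $X$ with function field $M$, and since the Krull domain $B$ equals the intersection of its height-one localisations, write $A = B \cap M = \bigcap_{\mathfrak{q}}\big(B_\mathfrak{q} \cap M\big)$, an intersection of rank-$\le 1$ valuation rings of $M/k$. By Abhyankar's inequality these valuations split into the divisorial ones — local rings of prime divisors on normal models of $M$ — and those of residue transcendence degree zero, whose centre on every model is a closed point. The latter are genuinely two-dimensional phenomena: they are not visible on any single model and are only captured by the inverse system of blow-ups of $X$ (the Zariski--Riemann space), so one cannot simply read off $A$ as the sections over an affine open of $X$ as in the curve case.

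The plan for $n=2$ would then be to reorganise $A$ according to the centres of these valuations, to resolve the singularities of $X$ (surface resolution being available over an arbitrary field), and to reduce finite generation of $A$ to a statement about rings of sections of divisorial sheaves on a smooth projective surface. This last reduction is exactly where the main obstacle lies, and where the hypothesis that $B$ is finitely generated must again be used crucially: arbitrary section rings $\bigoplus_n H^0(X, nD)$ on surfaces need not be finitely generated, as Zariski's own counterexample shows, so the divisorial data coming from $B \cap M$ has to be controlled. I would invoke the surface-specific finiteness theory of Zariski — the finiteness and unique factorisation of complete (integrally closed) ideals on a smooth surface, together with Riemann--Roch on the resolution — to force the relevant graded ring of sections to be finitely generated. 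I expect the two most delicate points to be showing that the valuation-theoretic description of $A$ feeds correctly into this finiteness machinery, and handling the non-divisorial rank-one valuations, which have no counterpart in the curve case.
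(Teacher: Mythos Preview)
The paper does not prove this theorem: its entire proof is the sentence ``See the main theorem of \cite{MR88034}'', i.e.\ a citation to Nagata's original article. There is therefore no argument in the paper to compare your proposal against.

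On its own merits, your sketch is a reasonable outline of the Zariski--Nagata approach. The reductions to $M = \mathrm{Frac}(A)$ and to $A$ normal are correct, and the cases $n=0$ and $n=1$ are essentially complete: the key point in the curve case, that all but finitely many closed points of the projective model lie in $S$, is handled by the combination of the indeterminacy locus having codimension $\ge 2$ and Chevalley constructibility, exactly as you indicate. For $n=2$, however, what you have written is a plan and not a proof. You have correctly identified the architecture (pass to the Zariski--Riemann space, separate divisorial from non-divisorial valuations, resolve, and appeal to finiteness of complete ideals and Riemann--Roch) and the two genuine difficulties, but neither is carried out: you do not actually show that the valuation-theoretic description of $A$ reduces to a finitely generated graded section ring, nor do you dispose of the rank-one non-divisorial valuations. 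Both steps are where the real content of Nagata's theorem lies, and your proposal stops precisely at the point where the work begins.

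If the aim is to match the paper, a citation to \cite{MR88034} is all that is required. If the aim is a self-contained proof, the $n=2$ case must be completed, and for that you should follow either Nagata's original argument or the later expositions based on Zariski's theory of complete ideals on smooth surfaces.
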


\begin{proof}
See the main theorem of \cite{MR88034}.
\end{proof}

\begin{remark}[Translation note]
\label{rem:translation_note}
The terminology used in \cite{MR88034} is based on the previous work \cite{MR82725} by the same author.
In particular, note that an \emph{affine ring} is a ring finitely generated over either a field or a Dedekind domain (\cite[\S 2]{MR82725}). 
\end{remark}

\begin{proof}[Proof of Corollary \ref{cor:fin_gen}]
Let $U \subseteq X$ be an affine non-empty open subset of $X$.
Write $A = \mathscr{O}_X^{\mathscr{F}}(U)$, $B = \mathscr{O}_X(U)$ and $M = \mathscr{O}_{X,\eta}^{\mathscr{F}}$.
By assumption, $B$ is a normal integral finitely generated $k$-algebra. 
Applying Lemma \ref{lem:cartesian_description} with $x = \eta \in U$ the generic point of $X$ yields
$$ A = B \cap M \subseteq R(B). $$
By Lemma \ref{lem:morphism_local_rings}, $M$ is a subfield of $R(B)$. 
By Lemma \ref{lem:dim_first_integrals},
$$ \mathrm{tr\,deg}_{k} M \leq \mathrm{corank} \,\mathscr{F} = 2.$$
Applying the Theorem gives that $A$ is a finitely generated $k$-algebra.
\end{proof}

\begin{remark}[Finite generation and log canonical singularities]
\label{rem:fin_gen}
In general, the first integrals of a torsion-free foliation need not be finitely generated.
The survey \cite[\S 3.1]{MR1917641} gives an example of a rank one foliation on $\mathbb{A}_k^7$ whose first integrals are not locally of finite type over $k$.
Further examples are given in \S 3.2.
The foliations given as counterexamples are all induced by a nilpotent vector field.
According to \cite[I.ii.4]{MR3128985}, a rank one foliation has this property if and only if it does not have log-canonical singularities.
It seems plausible to conjecture that the first integrals of a log-canonical algebraically integrable torsion-free foliation are locally of finite type over $k$.
\end{remark}

\subsection{Further Properties of First Integrals}
\label{subsec:first_integrals_two}

This subsection shows that the restriction maps of affine open sets in the sheaf of first integrals are birational (Proposition \ref{prop:birational_equivalence}).
This is a first step in showing that the restriction maps are open immersions.

\begin{proposition}[First integrals are algebraically closed]
\label{prop:alg_closed}
Let $k$ be a field of characteristic zero and let $(X, \mathscr{F})$ be an integral directed space over $k$. 
Assume that $\mathscr{F}$ is torsion-free and let $U \subseteq X$ be an open subset of $X$.
Then, the morphism of rings
$$ \mathscr{O}_X^{\mathscr{F}}(U) \rightarrow \mathscr{O}_X(U) $$
is algebraically closed.
\end{proposition}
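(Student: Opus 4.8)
The plan is to unwind the definition. Writing $A = \mathscr{O}_X^{\mathscr{F}}(U)$ and $B = \mathscr{O}_X(U)$, I must show that every $a \in B$ which is algebraic over $A$ — that is, a root of some nonzero polynomial in $A[t]$ — already lies in $A$, equivalently that $d_{\mathscr{F}}a = 0$. One may assume $U$ is non-empty, so that $B$ is a domain and $A = \mathrm{ker}\bigl(d_{\mathscr{F}} : B \to \mathscr{F}(U)\bigr)$ is a subring of $B$, indeed a $k$-algebra by Lemma \ref{lem:sheaf_algebras}.

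First I would fix such an $a$ and choose, among all nonzero polynomials in $A[t]$ vanishing at $a$, one $q = \sum_{i=0}^n c_i t^i$ (with $c_i \in A$ and $c_n \neq 0$) of minimal degree $n$. Since $B$ is a domain, a nonzero constant cannot vanish at $a$, so $n \geq 1$. Here — and only here — characteristic zero enters: the derivative $q' = \sum_{i=1}^n i\,c_i\,t^{i-1}$ has leading coefficient $n\,c_n$, which is nonzero because $A$ is a domain containing $\mathbb{Q}$; thus $q'$ is a nonzero polynomial in $A[t]$ of degree $n-1 < n$, and by minimality of $n$ it does not vanish at $a$, so $q'(a) \neq 0$ in $B$.

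Next I would apply the chain rule of Lemma \ref{lem:product_chain_rule} to the function $f = a$ and the polynomial $q \in \mathscr{O}_X^{\mathscr{F}}(U)[t]$, obtaining
$$ 0 = d_{\mathscr{F}}\bigl(q(a)\bigr) = q'(a)\cdot d_{\mathscr{F}}a \qquad \text{in } \mathscr{F}(U). $$
Since $\mathscr{F}$ is torsion-free and $X$ is integral, $\mathscr{F}(U)$ is a torsion-free module over the domain $\mathscr{O}_X(U)$, so multiplication by the nonzero element $q'(a)$ is injective on it; therefore $d_{\mathscr{F}}a = 0$, i.e. $a \in \mathscr{O}_X^{\mathscr{F}}(U)$, as desired.

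The argument is short, and there is no serious obstacle beyond this bookkeeping. The essential inputs are the chain rule (which is why one must work with a polynomial having coefficients in $A$, forcing the degree-minimisation trick rather than simply taking a minimal polynomial over the fraction field of $A$), the characteristic-zero hypothesis — without which $q'$ could vanish identically and the statement genuinely fails, since $p$-th powers of functions are always first integrals in characteristic $p$ — and the torsion-freeness of $\mathscr{F}$ used in the final step.
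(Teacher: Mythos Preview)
Your proof is correct and follows essentially the same approach as the paper: both apply the chain rule to obtain $p'(a)\cdot d_{\mathscr{F}}a = 0$, use characteristic zero to ensure $p'$ is a nonzero polynomial of strictly lower degree, and invoke torsion-freeness of $\mathscr{F}(U)$ over the domain $\mathscr{O}_X(U)$ to conclude. The only cosmetic difference is that the paper phrases the argument as an induction on $\deg p$ (if $p'(a)=0$, apply the inductive hypothesis to $p'$), whereas you phrase it via a minimal-degree choice; these are logically equivalent.
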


\begin{proof}
Let $p(t) \in \mathscr{O}_X^{\mathscr{F}}(U)[t]$ be a non-zero polynomial with coefficients in the ring of first integrals over $U$.
Suppose that there exists an $f \in \mathscr{O}_X(U)$ such that $p(f) = 0$.
By definition of algebraically closed ring extension, the Lemma follows if it can be shown that $f \in \mathscr{O}_X^{\mathscr{F}}(U)$.
This is achieved by induction on $d = \mathrm{deg}\,p$. \par

\textbf{Base case.} The case $d = 0$ is trivial since any non-zero polynomial of degree zero cannot have a root. \par

\textbf{Inductive Step.} By the chain rule of Lemma \ref{lem:product_chain_rule},
$$ 0 = d_{\mathscr{F}}(p(f)) = p^{\prime}(f) \cdot d_{\mathscr{F}}f. $$
Now, $\mathscr{F}(U)$ is a torsion-free module over the domain $\mathscr{O}_X(U)$.
It follows that either $d_{\mathscr{F}}f = 0$, implying that $f \in \mathscr{O}_X^{\mathscr{F}}(U)$, or $p^{\prime}(f) = 0$.
But $p^{\prime}(t)$ is a polynomial with coefficients in $\mathscr{O}_X^{\mathscr{F}}(U)$.
Furthermore, because $X$ is defined over a field of characteristic zero, the initial morphism $\mathbb{Z} \rightarrow \mathscr{O}_X(U)$ is injective.
This implies that the degree $d-1$ coefficient of $p^{\prime}(t)$ is non-zero.
The inductive hypothesis is now satisfied.
\end{proof}

\begin{lemma}[Descent of normality]
\label{lem:normal_domains}
Let $k$ be a field of characteristic zero and let $(X, \mathscr{F})$ be a normal integral directed space over $k$.
Assume that $\mathscr{F}$ is torsion-free and let $U \subseteq X$ be an open subset of $X$.
Then, $\mathscr{O}_X^{\mathscr{F}}(U)$ is a normal domain.
\end{lemma}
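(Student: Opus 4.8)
The plan is to deduce the normality of $A = \mathscr{O}_X^{\mathscr{F}}(U)$ from two facts: that $B = \mathscr{O}_X(U)$ is a normal domain (since $X$ is normal and integral), and that the inclusion $A \hookrightarrow B$ is an algebraically closed ring extension (Proposition \ref{prop:alg_closed}). As $A$ is a subring of the domain $B$, it is itself an integral domain, so it only remains to show that $A$ is integrally closed in its field of fractions $K = \mathrm{Frac}(A)$.

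To that end, fix an element $z \in K$ which is integral over $A$; the goal is to show $z \in A$. The field $K$ embeds into $L = \mathrm{Frac}(B)$ compatibly with the inclusion $A \subseteq B$, so $z \in L$; regarding a monic integral equation for $z$ over $A$ as an equation over $B$ shows that $z$ is integral over $B$ as well. Normality of $B$ then gives $z \in B$.

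Now $z$ is an element of $B$ that satisfies a monic --- in particular non-zero --- polynomial with coefficients in $A = \mathscr{O}_X^{\mathscr{F}}(U)$, so Proposition \ref{prop:alg_closed} applies and yields $z \in A$. This shows that $A$ is integrally closed in $K$, i.e.\ that $\mathscr{O}_X^{\mathscr{F}}(U)$ is a normal domain.

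There is no real obstacle here: the content is entirely contained in Proposition \ref{prop:alg_closed}, which has already been proven, and the remaining argument is the standard two-step chase --- first into $B$ using normality of $X$, then into $A$ using the algebraic closedness of the extension. The only point needing a moment's care is that the polynomial witnessing integrality of $z$ over $A$ is non-zero, which is automatic since it may be taken monic.
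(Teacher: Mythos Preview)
Your proof is correct and follows essentially the same approach as the paper: both use Proposition \ref{prop:alg_closed} to see that $A$ is algebraically (hence integrally) closed in $B$, then combine this with the normality of $B$ to conclude that $A$ is integrally closed in its own fraction field. The only cosmetic difference is that the paper packages the two-step chase via a transitivity-of-integral-closure lemma, whereas you carry it out elementwise.
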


\begin{proof}
For ease of notation, set $A = \mathscr{O}_X^{\mathscr{F}}(U)$ and $B = \mathscr{O}_X(U)$.
Let $R(B)$ (\emph{respectively} $R(A)$) denote the field of fractions of the domain $B$ (\emph{respectively} $A$).
By Proposition \ref{prop:alg_closed}, $A \rightarrow B$ is an algebraically closed morphism of integral domains. 
In particular $A$ is integrally closed in $B$. Since $X$ is normal, $B$ is integrally closed in $R(B)$. By \cite[\href{https://stacks.math.columbia.edu/tag/0308}{Lemma 0308}]{stacks-project}, the integral closure of $A \rightarrow B \rightarrow R(B)$ is $A$.
But the morphism $A \rightarrow R(B)$ factors into $A \rightarrow R(A) \rightarrow R(B)$, hence $A$ is integrally closed in $R(A)$.
\end{proof}

\begin{lemma}[Restriction of first integrals is integrally closed]
\label{lem:integrally_closed}
Let $k$ be a field of characteristic zero and let $(X, \mathscr{F})$ be a normal integral directed space over $k$. Suppose that $\mathscr{F}$ is torsion-free.
Let $V \subseteq U \subseteq X$ be two affine open subsets of $X$.
Then, the morphism of rings
$$ \mathscr{O}_X^{\mathscr{F}}(U) \rightarrow \mathscr{O}_X^{\mathscr{F}}(V)$$
is integrally closed.
\end{lemma}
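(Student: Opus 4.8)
The plan is to deduce the statement from two facts already in hand: that the ring of first integrals sits \emph{algebraically} closed inside the full ring of regular functions (Proposition \ref{prop:alg_closed}), and that the normality of $X$ makes $\mathscr{O}_X(U)$ integrally closed in $\mathscr{O}_X(V)$. Write $A = \mathscr{O}_X^{\mathscr{F}}(U)$ and $B = \mathscr{O}_X^{\mathscr{F}}(V)$. Since $X$ is integral, every restriction map between non-empty open subsets is injective, so the natural commutative square with rows $A \hookrightarrow \mathscr{O}_X(U)$, $B \hookrightarrow \mathscr{O}_X(V)$ and columns the restriction maps lets us regard all four rings as subrings of $\mathscr{O}_X(V)$, with
$$ A \subseteq \mathscr{O}_X(U) \subseteq \mathscr{O}_X(V), \qquad A \subseteq B \subseteq \mathscr{O}_X(V). $$

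First I would record that $A$ is integrally closed in $\mathscr{O}_X(U)$: by Proposition \ref{prop:alg_closed} the extension $A \to \mathscr{O}_X(U)$ is algebraically closed, which is stronger. Next, I would observe that $\mathscr{O}_X(U)$ is integrally closed in $\mathscr{O}_X(V)$: as $U$ is affine and $X$ normal and integral, $\mathscr{O}_X(U)$ is a normal domain, and its fraction field is the function field $K = \kappa(\eta)$; since $\mathscr{O}_X(V) \subseteq K$, any element of $\mathscr{O}_X(V)$ integral over $\mathscr{O}_X(U)$ lies in the integral closure of $\mathscr{O}_X(U)$ in $K$, which is $\mathscr{O}_X(U)$ itself. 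Combining the two by the elementary transitivity of integral closedness — if $c \in \mathscr{O}_X(V)$ is integral over $A$ then it is integral over $\mathscr{O}_X(U)$, hence lies in $\mathscr{O}_X(U)$, and being then integral over $A$ and contained in $\mathscr{O}_X(U)$ it lies in $A$ — shows $A$ is integrally closed in $\mathscr{O}_X(V)$. Finally, since $A \subseteq B \subseteq \mathscr{O}_X(V)$, any $b \in B$ integral over $A$ is an element of $\mathscr{O}_X(V)$ integral over $A$, hence in $A$; this is exactly the assertion that $\mathscr{O}_X^{\mathscr{F}}(U) \to \mathscr{O}_X^{\mathscr{F}}(V)$ is integrally closed.

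There is no serious obstacle here: all the weight is carried by Proposition \ref{prop:alg_closed} and the normality of $X$, and the rest is formal. The only points requiring a little care are the bookkeeping of the inclusions (that the square genuinely commutes and that all rings embed in $\mathscr{O}_X(V)$, which uses integrality of $X$), and resisting the temptation to argue via "$A$ and $B$ share a common fraction field" — that is precisely the content of the following Proposition \ref{prop:birational_equivalence} and is not yet available, which is why the argument is routed through $\mathscr{O}_X(U) \subseteq \mathscr{O}_X(V)$ rather than through fraction fields.
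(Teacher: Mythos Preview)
Your proof is correct and follows essentially the same approach as the paper: use Proposition~\ref{prop:alg_closed} to get $A$ algebraically (hence integrally) closed in $\mathscr{O}_X(U)$, use normality to pass up to a larger ambient ring, then restrict to the subring $\mathscr{O}_X^{\mathscr{F}}(V)$. The only cosmetic difference is that the paper routes through the full fraction field $R(\mathscr{O}_X(U))$ rather than $\mathscr{O}_X(V)$, but since $\mathscr{O}_X(V)\subseteq R(\mathscr{O}_X(U))$ this amounts to the same argument.
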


\begin{proof}
For ease of notation, set $A =  \mathscr{O}_X^{\mathscr{F}}(U)$, $A^{\prime} =  \mathscr{O}_X^{\mathscr{F}}(V)$ and $B =  \mathscr{O}_X(U)$
By Proposition \ref{prop:alg_closed}, $A$ is algebraically closed in $B$.
Since $X$ is normal, $B$ is integrally closed in $R(B)$. 
Therefore $A$ is integrally closed in $R(B)$.
Since $A^{\prime}$ is a subring of $R(B)$, $A$ is integrally closed in $A^{\prime}$.
\end{proof}

\begin{proposition}[Restriction of first integrals is birational]
\label{prop:birational_equivalence}
Let $k$ be a field of characteristic zero and let $(X, \mathscr{F})$ be a normal integral directed space over $k$. Suppose that $\mathscr{F}$ is torsion-free and the first integrals of $\mathscr{F}$ are locally of finite type over $k$.
Let $V \subseteq U \subseteq X$ be two affine open subsets of $X$.
Suppose that the injective restriction morphism
$$\rho : \mathscr{O}_X^{\mathscr{F}}(U) \rightarrow \mathscr{O}_X^{\mathscr{F}}(V)$$
induces an algebraic extension of fields
$$ R\left(\mathscr{O}_X^{\mathscr{F}}(U)\right) \rightarrow R\left(\mathscr{O}_X^{\mathscr{F}}(V)\right).$$
Then $\rho$ is a birational morphism and
$$R\left(\mathscr{O}_X^{\mathscr{F}}(U)\right) \cong R\left(\mathscr{O}_X^{\mathscr{F}}(V)\right).$$
\end{proposition}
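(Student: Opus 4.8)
The plan is to prove the sharper assertion that $R\bigl(\mathscr{O}_X^{\mathscr{F}}(U)\bigr)$ is \emph{integrally} closed in $R\bigl(\mathscr{O}_X^{\mathscr{F}}(V)\bigr)$; since the extension $R(\mathscr{O}_X^{\mathscr{F}}(U)) \subseteq R(\mathscr{O}_X^{\mathscr{F}}(V))$ is algebraic by hypothesis, and every element algebraic over a field is integral over it, this forces $R(\mathscr{O}_X^{\mathscr{F}}(U)) = R(\mathscr{O}_X^{\mathscr{F}}(V))$, and birationality of $\rho$ will then be a one-line consequence. Throughout I abbreviate $A = \mathscr{O}_X^{\mathscr{F}}(U)$ and $A' = \mathscr{O}_X^{\mathscr{F}}(V)$, so that $\rho : A \hookrightarrow A'$ and $R(A) \subseteq R(A')$.

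First I would assemble the inputs. By Lemma \ref{lem:normal_domains}, both $A$ and $A'$ are normal domains; by hypothesis both are finitely generated $k$-algebras; and by Lemma \ref{lem:integrally_closed}, $A$ is integrally closed in $A'$. From these I would upgrade to the statement that $A$ is integrally closed in the \emph{field} $R(A')$: if $\gamma \in R(A')$ is integral over $A$, then it is a fortiori integral over $A'$, hence $\gamma \in A'$ because $A'$ is a normal domain with fraction field $R(A')$, and hence $\gamma \in A$ because $A$ is integrally closed in $A'$.

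The main step is then to pass from ``$A$ integrally closed in $R(A')$'' to ``$R(A)$ integrally closed in $R(A')$''. Given $\gamma \in R(A')$ integral over $R(A)$, fix a monic relation $\gamma^m + f_{m-1}\gamma^{m-1} + \dots + f_0 = 0$ with $f_i \in R(A)$, choose $s \in A \setminus \{0\}$ with $s f_i \in A$ for all $i$, and multiply through by $s^m$; this exhibits $s\gamma$ as a root of the monic polynomial $t^m + \sum_{i=0}^{m-1} s^{\,m-i} f_i\, t^{\,i}$, whose coefficients $s^{\,m-i}f_i = s^{\,m-i-1}(s f_i)$ lie in $A$ (note $m-i-1 \geq 0$). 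Hence $s\gamma$ is integral over $A$; since $s\gamma \in R(A')$ and $A$ is integrally closed in $R(A')$, we get $s\gamma \in A$ and therefore $\gamma = s^{-1}(s\gamma) \in R(A)$. Thus $R(A)$ is integrally closed in $R(A')$, and combined with the algebraicity hypothesis this gives $R(A) = R(A')$, the claimed isomorphism of function fields.

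Finally, for birationality of $\rho$: since $A'$ is of finite type over $k \subseteq A$, write $A' = A[\gamma_1, \dots, \gamma_r]$, and using $R(A') = R(A)$ write $\gamma_i = a_i/s_i$ with $a_i, s_i \in A$ and $s_i \neq 0$. Putting $s = s_1 \cdots s_r$, every $\gamma_i$ lies in $A_s$, so $A_s = A'_s$; hence $\rho$ restricts to an isomorphism over the dense open subset $D(s) \subseteq \operatorname{Spec} A$, which is exactly the statement that $\rho$ is birational. I expect the only point carrying genuine content to be the clearing-of-denominators argument in the third paragraph; everything else is bookkeeping with Lemmas \ref{lem:normal_domains} and \ref{lem:integrally_closed}, and I do not anticipate a real obstacle.
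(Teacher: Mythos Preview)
Your proof is correct, and it takes a genuinely different route from the paper's. The paper argues geometrically: since $A'$ is of finite type over $A$ and the extension of fraction fields is algebraic, $\mathrm{Spec}\,A' \to \mathrm{Spec}\,A$ is generically finite, so by \cite[\href{https://stacks.math.columbia.edu/tag/02NX}{Lemma 02NX}]{stacks-project} there is some $g \in A$ for which $A_g \to A'_g$ is finite, hence integral; combining this with the fact that $A_g \to A'_g$ is integrally closed (Lemma~\ref{lem:integrally_closed} plus compatibility with localisation) forces $A_g \cong A'_g$, whence $R(A) \cong R(A')$. Your argument instead stays purely ring-theoretic: you use normality of $A'$ (Lemma~\ref{lem:normal_domains}) together with Lemma~\ref{lem:integrally_closed} to promote ``$A$ integrally closed in $A'$'' to ``$A$ integrally closed in $R(A')$'', and then a clearing-of-denominators trick to get ``$R(A)$ integrally closed in $R(A')$'', which immediately gives $R(A) = R(A')$ under the algebraicity hypothesis. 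Your approach is more elementary in that it avoids the generic-finiteness machinery and makes the role of normality of $A'$ explicit; the paper's approach is slightly more geometric and does not invoke Lemma~\ref{lem:normal_domains} directly. Both hinge on Lemma~\ref{lem:integrally_closed}.
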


\begin{proof}
For ease of notation, set $A = \mathscr{O}_X^{\mathscr{F}}(U)$ and $A^{\prime} = \mathscr{O}_X^{\mathscr{F}}(V)$.
The restriction morphism $\rho : A \rightarrow A^{\prime}$ is injective since $X$ is integral.
It is known that $R(A) \rightarrow R(A^{\prime})$ is an algebraic extension.
It is shown that this is an isomorphism of fields. \par

By assumption $\mathrm{tr\,deg}_{R(A)} R(A^{\prime}) = 0$ and $A$ and $A^{\prime}$ are finitely generated over $k$.
In particular $A^{\prime}$ is finitely generated over $A$. This implies that
$$\mathrm{Spec}\,\rho : \mathrm{Spec}\,A^{\prime} \rightarrow \mathrm{Spec}\,A$$
is of finite type and separated.
By the implication $(1) \rightarrow (5)$ of \cite[\href{https://stacks.math.columbia.edu/tag/02NX}{Lemma 02NX}]{stacks-project}, there exists an open set $V \subseteq \mathrm{Spec}\,A$ such that the morphism $\rho\,|_{\rho^{-1}(V)} : \rho^{-1}(V) \rightarrow V$ is finite.
Shrinking $V$ if necessary, it may be assumed that there exists a $g \in A$ such that $V = D(g) = \mathrm{Spec}\,A_g$ is a distinguished open set in $\mathrm{Spec}\, A$.
As a consequence $\rho^{-1}(V) = \mathrm{Spec}\, A_g^{\prime}$.
It follows that the morphism $A_g \rightarrow A_g^{\prime}$ is finite, hence integral.
On the other hand, the morphism $A \rightarrow A^{\prime}$ is integrally closed (Lemma \ref{lem:integrally_closed}).
Because localisation and integral closure commute (\cite[\href{https://stacks.math.columbia.edu/tag/0307}{Lemma 0307}]{stacks-project}), the morphism $A_g \rightarrow A_g^{\prime}$ is integrally closed. But now $A_g \rightarrow A_g^{\prime}$ is both integral and integrally closed.
This implies that $A_g \cong A_g^{\prime}$, so that $\rho$ is a birational morphism.
Finally, because both $A$ and $A^{\prime}$ are of finite type over $k$, applying \cite[\href{https://stacks.math.columbia.edu/tag/0552}{Lemma 0552}]{stacks-project} yields $R(A) \cong R(A^{\prime})$.
\end{proof}

\section{Stability}
\label{sec:stability}

This section introduces a stability condition.
\S \ref{subsec:stability_quotient} defines stability and shows that a neighborhood of a stable point has a geometric quotient.
\S \ref{subsec:stability_generic} shows that the generic point of an integral scheme is stable.

\subsection{Stability and Geometric Quotients}
\label{subsec:stability_quotient}

This subsection defines stability and construct a local geometric quotient around a stable point.
To this end, it is necessary to show that the restriction maps of affine open sets in the sheaf of first integrals are open immersions.
This is achieved using the results of \S \ref{subsec:first_integrals_two} together with the flatness assumption.

\begin{definition}[$\mathscr{F}$-stability]
\label{def:stability}
Let $S$ be a scheme and let $(X, \mathscr{F})$ be an integral directed space over $S$.
Let $\eta$ be generic point of $X$.
A point $x \in X$ \emph{is $\mathscr{F}$-stable} if there exists an affine open subset $U \subseteq X$ containing $x$ whose induced morphism
$$ A = \Gamma \left( U, \mathscr{O}_X^{\mathscr{F}} \right) \rightarrow \Gamma \left( U, \mathscr{O}_X \right) = B$$
has the following properties.
\begin{enumerate}[label={(\alph*)}]
\item \label{stable:smooth} It is smooth.
\item \label{stable:dimension} It is of relative dimension $\mathrm{rank}\, \mathscr{F}$.
\item \label{stable:connected} It has geometrically connected fibres.
\end{enumerate}
\end{definition}

\begin{remark}[Stability is an open condition]
\label{rem:stable_open}
It follows from the definition that being $\mathscr{F}$-stable is an open condition.
\end{remark}

\begin{lemma}[Stability and smoothness]
\label{lem:stable_smooth}
Let $S$ be a scheme and let $(X, \mathscr{F})$ be an integral directed space over $S$.
Suppose that $x \in X$ is an $\mathscr{F}$-stable point, then $\mathscr{F}_x$ is a free $\mathscr{O}_{X,x}$-module.
\end{lemma}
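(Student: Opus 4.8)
The plan is to use the smoothness hypothesis to dominate $\mathscr{F}$ by a locally free sheaf near $x$, and then collapse the difference by a rank count. Let $U = \mathrm{Spec}\,B$ be an affine open neighbourhood of $x$ as furnished by Definition \ref{def:stability}, put $A = \mathscr{O}_X^{\mathscr{F}}(U)$, and let $\pi : U \rightarrow V = \mathrm{Spec}\,A$ be the associated morphism, which by hypothesis is smooth of relative dimension $r := \mathrm{rank}\,\mathscr{F}$; thus $\Omega_{U/V}^1$ is locally free of rank $r$. I would first produce a surjection $\Omega_{U/V}^1 \rightarrow \mathscr{F}|_U$, and then observe that it becomes an isomorphism upon localising at $x$, since its source is locally free of rank $r$ while $\mathscr{F}$ has generic rank $r$ as well.

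For the surjection, let $q : \Omega_{X/S}^1|_U \rightarrow \mathscr{F}|_U$ be the defining quotient. By the right-exact first cotangent sequence for the ring extension $A \rightarrow B$, the kernel of $\Omega_{X/S}^1|_U \rightarrow \Omega_{U/V}^1$ is the $\mathscr{O}_U$-submodule generated by the differentials $da$ with $a \in A$. But $q(da) = d_{\mathscr{F}}(a)$, which vanishes for every $a \in A = \mathscr{O}_X^{\mathscr{F}}(U)$ by definition of the sheaf of first integrals; hence $q$ descends to a surjective morphism $\bar q : \Omega_{U/V}^1 \rightarrow \mathscr{F}|_U$. This step deliberately avoids quoting $\mathscr{F}$-invariance through Lemma \ref{lem:invariant_complex}, which would require $\pi$ to be universally separable — a property a smooth morphism need not have in positive characteristic — since only the right-exact cotangent sequence is actually used.

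Finally, let $\eta$ be the generic point of $X$, which lies in $U$, so that $\mathscr{O}_{X,\eta} = \kappa(\eta)$ is the fraction field of the domain $\mathscr{O}_{X,x}$, and set $\mathscr{K} = \ker \bar q$. Localising the exact sequence $0 \rightarrow \mathscr{K} \rightarrow \Omega_{U/V}^1 \rightarrow \mathscr{F}|_U \rightarrow 0$ at $x$ gives $0 \rightarrow \mathscr{K}_x \rightarrow \mathscr{O}_{X,x}^{\oplus r} \rightarrow \mathscr{F}_x \rightarrow 0$, using local freeness of $\Omega_{U/V}^1$. Localising once more at $\eta$, the map $\kappa(\eta)^{\oplus r} \rightarrow \mathscr{F}_\eta$ is an isomorphism, since $\dim_{\kappa(\eta)} \mathscr{F}_\eta = \mathrm{rank}\,\mathscr{F} = r$; therefore $\mathscr{K}_x \otimes_{\mathscr{O}_{X,x}} \kappa(\eta) = \mathscr{K}_\eta = 0$, that is, $\mathscr{K}_x$ is torsion. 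But $\mathscr{K}_x$ is a submodule of the torsion-free module $\mathscr{O}_{X,x}^{\oplus r}$ over the domain $\mathscr{O}_{X,x}$, hence torsion-free; being both torsion and torsion-free, it is zero. Consequently $\bar q$ is an isomorphism at $x$ and $\mathscr{F}_x \cong \mathscr{O}_{X,x}^{\oplus r}$ is free. The only point requiring care is the construction of $\bar q$ over a general base $S$; the remaining steps are routine commutative algebra, so I do not anticipate a genuine obstacle.
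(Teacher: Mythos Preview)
Your proposal is correct and follows essentially the same approach as the paper: construct the surjection $\Omega_{U/V}^1 \twoheadrightarrow \mathscr{F}|_U$, then use the rank count together with torsion-freeness of the locally free source to kill the kernel. You are more explicit than the paper in two places --- you justify the surjection via the right-exact cotangent sequence (the paper simply asserts it), and you carry out the torsion argument stalkwise at $x$ and $\eta$ rather than globally over $B$ --- but the mathematical content is identical.
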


\begin{proof}
Using the notation of Definition \ref{def:stability}, consider the induced surjection of modules
\begin{align}
\label{eq:surjection}
\Omega_{B/A}^1 \rightarrow F,
\end{align}
where $F = \Gamma(U, \mathscr{F})$.
Since $\Omega_{B/A}^1$ is locally free (Property \ref{stable:smooth}), it is enough to show that (\ref{eq:surjection}) is an isomorphism.
To this end, note that the modules $\Omega_{B/A}^1$ and $F$ have the same rank (Property \ref{stable:dimension}), hence the kernel submodule $K$ of (\ref{eq:surjection}) must be a torsion submodule of $\Omega_{B/A}^1$.
Since $.\Omega_{B/A}^1$ is torsion-free, $K = 0$.
\end{proof}

\begin{proposition}[Neighborhood of stable point is a geometric quotient]
\label{prop:stable_geometric quotient}
Let $k$ be a field of characteristic zero and let $(X, \mathscr{F})$ be a normal integral foliated space locally of finite type over $k$.
Suppose that $\mathscr{F}$ is algebraically integrable and its first integrals are locally of finite type over $k$. 
Suppose that $x \in X$ is $\mathscr{F}$-stable and let $\iota : U \subseteq X$ be the affine open set satisfying the hypothesis of Definition \ref{def:stability}.
Let $V = \mathrm{Spec}\,\Gamma \left( U, \mathscr{O}_X^{\mathscr{F}}\right)$. 
Then $\pi : U \rightarrow V$ is a geometric quotient.
\end{proposition}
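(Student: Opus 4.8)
The plan is to verify, one at a time, the four defining properties of a geometric quotient from Definition \ref{def:geom_quotients} for $\pi : U \to V$, regarded as a morphism out of the foliated space $(U, \iota^{\#}\mathscr{F})$. First I would record the reductions. Put $B = \Gamma(U, \mathscr{O}_X)$, $A = \Gamma(U, \mathscr{O}_X^{\mathscr{F}})$ and $F = \Gamma(U, \mathscr{F})$. Since $d_{\mathscr{F}} : B \to F$ annihilates $A$ it is an $A$-derivation, hence factors through a surjection $\Omega^1_{B/A} \to F$; as $A \to B$ is smooth of relative dimension $\mathrm{rank}\,\mathscr{F}$ the module $\Omega^1_{B/A}$ is locally free of that rank, while $F$ has the same rank, so the kernel of $\Omega^1_{B/A} \to F$ is a rank-zero submodule of a torsion-free module, hence zero. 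Thus $\iota^{\#}\mathscr{F} = \mathscr{F}|_U \cong \Omega^1_{U/V}$ and in particular $\mathscr{F}|_U$ is locally free, so torsion-free. All the hypotheses on $(X,\mathscr{F})$ are inherited by $(U, \mathscr{F}|_U)$, and now the torsion-freeness assumption needed for the results of \S\ref{sec:first_integrals} and \S\ref{subsec:first_integrals_two} is also satisfied, so those apply to $(U, \mathscr{F}|_U)$.

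Properties \ref{geometric:invariant}, \ref{geometric:open} and \ref{geometric:functor} are then quick. The factorization $\Omega^1_{U/k} \to \Omega^1_{U/V} \to \mathscr{F}|_U$ just produced is exactly $\mathscr{F}|_U$-invariance of $\pi$, giving \ref{geometric:invariant} (alternatively invoke Lemma \ref{lem:invariant_complex}.(1), $A$ being by definition the kernel of $B \to F$). Property \ref{geometric:open} holds because $\pi$ is smooth, hence flat and locally of finite presentation, hence universally open. For \ref{geometric:functor}, by Lemma \ref{lem:functor_geometric_leaves} applied to $(U, \mathscr{F}|_U)$ it suffices to show that for every algebraically closed field $L$ and every $\mathrm{Spec}\,L \to V$ the fibre $Z = U \times_V \mathrm{Spec}\,L$ is a smooth algebraic leaf of $\mathscr{F}|_U$ over $\mathrm{Spec}\,L$. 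I would first check that $U$, equipped with the graph morphism $(\mathrm{id}_U, \pi) : U \to U \times_k V$, is a smooth family of algebraic leaves of $\mathscr{F}|_U$ over $V$: the graph is a closed immersion since $V$ is separated; $\pi$ is smooth; its fibres are smooth and, by stability property \ref{stable:connected}, geometrically connected, hence geometrically regular and connected, hence geometrically integral; and $(U,\Omega^1_{U/V}) \xrightarrow{\ \mathrm{id}\ } (U, \mathscr{F}|_U)$ is weakly \'{e}tale because the isomorphism $\Omega^1_{U/V} \cong \mathscr{F}|_U$ is compatible with the two quotient maps from $\Omega^1_{U/k}$. Then $Z$ arises from this family by the base change $\mathrm{Spec}\,L \to V$, so Lemma \ref{lem:base_change_algebraic_leaves} gives the claim.

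The substance is property \ref{geometric:exact}: exactness of $0 \to \pi^{-1}\mathscr{O}_V \to \mathscr{O}_U \to \mathscr{F}|_U$. This may be checked on stalks, where $(\pi^{-1}\mathscr{O}_V)_u = \mathscr{O}_{V,\pi(u)}$ and $\ker(\mathscr{O}_{U,u} \to \mathscr{F}_u) = \mathscr{O}_{X,u}^{\mathscr{F}}$ by definition, so everything comes down to the equality of subrings of $\mathscr{O}_{X,u} = B_{\mathfrak{q}}$, where $\mathfrak{q} \subseteq B$ is the prime of $u$ and $\mathfrak{p} = \mathfrak{q} \cap A$:
$$ \mathscr{O}_{V,\pi(u)} = A_{\mathfrak{p}} = \mathscr{O}_{X,u}^{\mathscr{F}}. $$
The inclusion $A_{\mathfrak{p}} \subseteq \mathscr{O}_{X,u}^{\mathscr{F}}$ is immediate from the quotient rule, elements of $A \setminus \mathfrak{p}$ being units of $\mathscr{O}_{X,u}$ on which $d_{\mathscr{F}}$ vanishes. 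For the reverse inclusion I would argue in two steps. Step one: $\mathscr{O}_{X,u}^{\mathscr{F}} \subseteq R(A)$. By Lemma \ref{lem:localisation_well_defined} any element of $\mathscr{O}_{X,u}^{\mathscr{F}}$ lies in $\mathscr{O}_X^{\mathscr{F}}(W)$ for some affine open $u \in W \subseteq U$; Lemma \ref{lem:cartesian_description} embeds $R(A)$ and $R(\mathscr{O}_X^{\mathscr{F}}(W))$ into $\mathscr{O}_{X,\eta}^{\mathscr{F}}$, and the transcendence degree count $\mathrm{tr\,deg}_k R(A) = \dim A = \dim X - \mathrm{rank}\,\mathscr{F} = \mathrm{tr\,deg}_k \mathscr{O}_{X,\eta}^{\mathscr{F}}$ (using smoothness of $\pi$ for the middle equality and algebraic integrability for the last) shows $\mathscr{O}_{X,\eta}^{\mathscr{F}}$, and hence $R(\mathscr{O}_X^{\mathscr{F}}(W))$, is algebraic over $R(A)$; Proposition \ref{prop:birational_equivalence} then yields $R(\mathscr{O}_X^{\mathscr{F}}(W)) = R(A)$, so $\mathscr{O}_X^{\mathscr{F}}(W) \subseteq R(A)$. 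Step two: $R(A) \cap B_{\mathfrak{q}} = A_{\mathfrak{p}}$, a general fact for the faithfully flat local homomorphism $A_{\mathfrak{p}} \to B_{\mathfrak{q}}$ (flat since $\pi$ is smooth): tensoring $0 \to A_{\mathfrak{p}} \to R(A) \to R(A)/A_{\mathfrak{p}} \to 0$ with $B_{\mathfrak{q}}$ and using faithful flatness forces any element of $R(A)$ lying in $B_{\mathfrak{q}}$ to lie in $A_{\mathfrak{p}}$. Combining the two steps gives $\mathscr{O}_{X,u}^{\mathscr{F}} \subseteq A_{\mathfrak{p}}$, completing \ref{geometric:exact}, and with it the proof.

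The main obstacle is precisely this last point, the identification of the stalk $\mathscr{O}_{X,u}^{\mathscr{F}}$ with the localization $A_{\mathfrak{p}}$: a priori $\mathscr{O}_{X,u}^{\mathscr{F}}$ is only a filtered colimit of rings of first integrals over shrinking neighbourhoods and has no evident relation to $A$ — this is the foliated counterpart of the failure of invariants to localize well in GIT (compare Example \ref{ex:radii}) — and it is the stability hypothesis, channelled through Proposition \ref{prop:birational_equivalence} and faithfully flat descent, that makes the identification go through. Everything else, including the reductions of the first paragraph and the verifications of \ref{geometric:invariant}, \ref{geometric:open} and \ref{geometric:functor}, is routine given the lemmas already established.
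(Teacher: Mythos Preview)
Your proposal is correct and follows essentially the same approach as the paper: properties \ref{geometric:invariant}, \ref{geometric:open}, \ref{geometric:functor} are dispatched routinely (the paper checks the geometric fibre directly rather than first recognising $U$ as a family of leaves over $V$ and base-changing, but this is cosmetic), and the substantive step---the stalk identification $\mathscr{O}_{V,\pi(u)} = \mathscr{O}_{X,u}^{\mathscr{F}}$---is proved exactly as in the paper's Lemma \ref{lem:local_exactness}, namely a transcendence-degree count feeding into Proposition \ref{prop:birational_equivalence} to obtain $\mathscr{O}_{X,u}^{\mathscr{F}} \subseteq R(A)$, followed by faithful flatness of $A_{\mathfrak{p}} \to B_{\mathfrak{q}}$ to conclude.
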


\begin{proof}
Property \ref{geometric:invariant} is shown. Because $k$ is a field of characteristic zero, the morphism $\pi : U \rightarrow V$ is universally separable (Remark \ref{rem:char_zero_separable}).
Note that by construction the sequence
$$ 0 \rightarrow \mathscr{O}_V \rightarrow \pi_* \mathscr{O}_U \rightarrow \pi_* \mathscr{F}$$
is a complex.
Hence, by Lemma \ref{lem:invariant_complex}.(1), $\pi$ is $\mathscr{F}$-invariant. \par

Property \ref{geometric:functor} is shown.
By Lemma \ref{lem:functor_geometric_leaves}, it is enough to show that, if $\mathrm{Spec}\,L \rightarrow V$ is a morphism where $L$ is an algebraically closed field, then the fibre $Z = U \times_V \mathrm{Spec}\,L$ is a smooth algebraic leaf. 
By \cite[\href{https://stacks.math.columbia.edu/tag/01KR}{Lemma 01KR}]{stacks-project}, since $V \rightarrow \mathrm{Spec}\,k$ is affine (hence separated), the morphism $Z = U \times_V L \rightarrow X \times_k L$ is a closed immersion.
Thus \ref{tangent:immersion} holds.
Being smooth follows by base change.
Thus \ref{tangent:smooth} holds.
By assumption $Z \rightarrow \mathrm{Spec} \, L$ is a smooth connected locally Noetherian scheme over a field $L$.
Therefore \cite[\href{https://stacks.math.columbia.edu/tag/038X}{Lemma 038X}]{stacks-project} shows that $Z$ is, in particular, a normal scheme.
Now, using the $(1) \rightarrow (2)$ implication of \cite[\href{https://stacks.math.columbia.edu/tag/033N}{Lemma 033N}]{stacks-project} gives that $Z$ is a disjoint union of integral schemes.
Since $Z$ is irreducible, it is integral.
Thus \ref{tangent:integral} holds.
Note that $f : Z \rightarrow U$ is universally separable since it is a $k$-morphism where $k$ is a field of characteristic zero.
Furthermore, $\Omega_{Z/L}^1 = f^* \Omega_{U/V}^1 = f^* \left( \iota^{\#} \mathscr{F} \right)$.
This shows \ref{tangent:distribution}. \par

Property \ref{geometric:open} is shown.
Since $\pi$ is flat and locally of finite presentation, it is universally open (\cite[\href{https://stacks.math.columbia.edu/tag/01UA}{Lemma 01UA}]{stacks-project}). \par

Property \ref{geometric:exact} is shown.
It suffices to show that, for all $x \in X$, the sequence
$$ 0 \rightarrow \mathscr{O}_{Y, \pi(x)} \rightarrow \mathscr{O}_{X,x} \rightarrow \mathscr{F}_x$$
is exact.
In other words, it has to be shown that the induced morphism
$$ \mathscr{O}_{Y, \pi(x)} \rightarrow \mathscr{O}_{X,x}^{\mathscr{F}} $$
is an isomorphism.
This is proven in the next Lemma \ref{lem:local_exactness}.
\end{proof}

\begin{lemma}[Local exactness]
\label{lem:local_exactness}
Let $k$ be a field of characteristic zero and let $(X, \mathscr{F})$ be a normal integral foliated space locally of finite type over $k$.
Suppose that $\mathscr{F}$ is algebraically integrable and the first integrals of $\mathscr{F}$ are locally of finite type over $k$. 
Let $Y = \mathrm{Spec}\,\Gamma \left( X, \mathscr{O}_X^{\mathscr{F}}\right)$ and suppose that the induced morphism $\pi : X \rightarrow Y$ satisfies the properties of $\mathscr{F}$-stability.
Then, for all $x \in X$,
the induced morphism of local rings
$$ \mathscr{O}_{Y, \pi(x)} \rightarrow \mathscr{O}_{X,x}^{\mathscr{F}} $$
is an isomorphism. 
\end{lemma}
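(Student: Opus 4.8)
The plan is to identify $\mathscr{O}_{X,x}^{\mathscr{F}}$ with the localization $A_{\mathfrak{q}}$, where $A=\Gamma(X,\mathscr{O}_X^{\mathscr{F}})$ and $B=\Gamma(X,\mathscr{O}_X)$ (so that $Y=\operatorname{Spec}A$ and $X=\operatorname{Spec}B$ may be taken affine), $\mathfrak{p}\subseteq B$ is the prime of $x$, and $\mathfrak{q}=\mathfrak{p}\cap A$ is the prime of $\pi(x)$. First I would record the set-up: since the stability conditions \ref{stable:smooth}--\ref{stable:connected} hold for $A\to B$ itself, every point of $X$ is $\mathscr{F}$-stable, so by the argument of Lemma \ref{lem:stable_smooth} the surjection $\Omega_{X/Y}^1\to\mathscr{F}$ is an isomorphism; in particular $\mathscr{F}$ is torsion-free, $A\to B$ is smooth (hence flat), and by Lemma \ref{lem:normal_domains} together with the finite generation hypothesis $A$ and $B$ are normal Noetherian domains with $A\hookrightarrow B$. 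The inclusion $A_{\mathfrak{q}}\subseteq\mathscr{O}_{X,x}^{\mathscr{F}}$ inside $\mathscr{O}_{X,x}=B_{\mathfrak{p}}$ is immediate: $A$ maps into $\mathscr{O}_{X,x}^{\mathscr{F}}$ by Lemma \ref{lem:commutative_description}, and any $s\in A\setminus\mathfrak{q}$ lies in $B\setminus\mathfrak{p}$, hence becomes a unit of the local ring $\mathscr{O}_{X,x}^{\mathscr{F}}$ (Lemma \ref{lem:morphism_local_rings}). Thus everything reduces to the reverse inclusion $\mathscr{O}_{X,x}^{\mathscr{F}}\subseteq A_{\mathfrak{q}}$.

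The key intermediate step is to show that every local first integral at $x$ is a rational function pulled back from $Y$, i.e.\ $\mathscr{O}_{X,x}^{\mathscr{F}}\subseteq R(A)$ inside $R(B)$. For this, note that $\mathscr{O}_{X,\eta}^{\mathscr{F}}$ is a field: its maximal ideal (Lemma \ref{lem:morphism_local_rings}) is $\mathfrak{m}_\eta\cap\mathscr{O}_{X,\eta}^{\mathscr{F}}=0$ since $\mathscr{O}_{X,\eta}=R(B)$; it contains $A$, hence $R(A)$. Because $\pi$ is smooth of relative dimension $\operatorname{rank}\mathscr{F}$ and dominant, its generic fibre is integral of dimension $\operatorname{rank}\mathscr{F}$ with function field $R(B)$, so $\operatorname{tr\,deg}_{R(A)}R(B)=\operatorname{rank}\mathscr{F}=\operatorname{tr\,deg}_{\mathscr{O}_{X,\eta}^{\mathscr{F}}}R(B)$, the last equality by algebraic integrability (Definition \ref{def:algebraically_integrable}); additivity of transcendence degree along $R(A)\subseteq\mathscr{O}_{X,\eta}^{\mathscr{F}}\subseteq R(B)$ then forces $\mathscr{O}_{X,\eta}^{\mathscr{F}}$ to be algebraic over $R(A)$. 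Now given $f\in\mathscr{O}_{X,x}^{\mathscr{F}}$, by Lemma \ref{lem:localisation_well_defined} it lies in $\mathscr{O}_X^{\mathscr{F}}(W)$ for some affine open $W\ni x$, whose fraction field is a subfield of $\mathscr{O}_{X,\eta}^{\mathscr{F}}$, hence algebraic over $R(A)$; by Proposition \ref{prop:birational_equivalence} the restriction $A\to\mathscr{O}_X^{\mathscr{F}}(W)$ is birational, so these two rings have the same fraction field $R(A)$ as subrings of $R(B)$, whence $\mathscr{O}_X^{\mathscr{F}}(W)\subseteq R(A)$ and in particular $f\in R(A)$. Together with the regularity of $f$ at $x$ this yields $f\in R(A)\cap B_{\mathfrak{p}}$.

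It then remains to prove $R(A)\cap B_{\mathfrak{p}}\subseteq A_{\mathfrak{q}}$, and here I would use normality. Since $A_{\mathfrak{q}}$ is a Noetherian normal local domain, $A_{\mathfrak{q}}=\bigcap_{\mathfrak{q}'}A_{\mathfrak{q}'}$ inside $R(A)$, the intersection running over the height-one primes $\mathfrak{q}'\subseteq\mathfrak{q}$ of $A$, so it suffices to show $v_{\mathfrak{q}'}(f)\geq 0$ for each such $\mathfrak{q}'$, where $v_{\mathfrak{q}'}$ is the valuation of the DVR $A_{\mathfrak{q}'}$. Fix $\mathfrak{q}'$; as $\mathfrak{q}'B\subseteq\mathfrak{p}$, choose a minimal prime $\mathfrak{P}$ of $\mathfrak{q}'B$ with $\mathfrak{P}\subseteq\mathfrak{p}$. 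By going-down for the flat map $A\to B$ one has $\mathfrak{P}\cap A=\mathfrak{q}'$, and by the dimension formula for flat local homomorphisms $\mathfrak{P}$ has height one, so $B_{\mathfrak{P}}$ is a DVR ($B$ normal). The induced local inclusion of DVRs $A_{\mathfrak{q}'}\to B_{\mathfrak{P}}$ gives that $v_{\mathfrak{P}}$ restricted to $R(A)$ is a nontrivial discrete valuation with valuation ring $R(A)\cap B_{\mathfrak{P}}\supseteq A_{\mathfrak{q}'}$ whose maximal ideal contains $\mathfrak{q}'A_{\mathfrak{q}'}$; since the only overring of the DVR $A_{\mathfrak{q}'}$ in $R(A)$ other than $R(A)$ itself is $A_{\mathfrak{q}'}$, that valuation ring equals $A_{\mathfrak{q}'}$, so $v_{\mathfrak{P}}|_{R(A)}=e\,v_{\mathfrak{q}'}$ for some integer $e\geq 1$. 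As $\mathfrak{P}\subseteq\mathfrak{p}$ we have $f\in B_{\mathfrak{p}}\subseteq B_{\mathfrak{P}}$, hence $v_{\mathfrak{P}}(f)\geq 0$ and therefore $v_{\mathfrak{q}'}(f)\geq 0$. This gives $f\in A_{\mathfrak{q}}$ and completes the argument.

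I expect the main obstacle to be the middle step, proving $\mathscr{O}_{X,x}^{\mathscr{F}}\subseteq R(A)$, since that is exactly where algebraic integrability (not merely involutivity) and the finite generation hypothesis enter, via the transcendence-degree count and Proposition \ref{prop:birational_equivalence}; once local first integrals are known to be honest rational functions on $Y$, the normality-and-flatness argument bounding their poles is essentially formal.
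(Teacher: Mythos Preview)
Your proof is correct and follows the same three-step architecture as the paper: (i) the easy inclusion $\mathscr{O}_{Y,\pi(x)}\subseteq\mathscr{O}_{X,x}^{\mathscr{F}}$; (ii) the transcendence-degree count combined with Proposition~\ref{prop:birational_equivalence} to show that every local first integral already lies in $R(A)$; and (iii) the descent from $R(A)\cap B_{\mathfrak p}$ to $A_{\mathfrak q}$. Your steps (i)--(ii) match the paper's ``generically finite'' and ``birationally equivalent'' paragraphs essentially verbatim. The one genuine difference is in step (iii): you argue valuatively, writing the normal local ring $A_{\mathfrak q}$ as an intersection of DVRs over height-one primes $\mathfrak q'\subseteq\mathfrak q$ and lifting each $\mathfrak q'$ to a height-one prime of $B$ contained in $\mathfrak p$ via flat going-down and the dimension formula. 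The paper instead uses a one-line faithful-flatness trick: since $A_{\mathfrak q}\to B_{\mathfrak p}$ is a flat local map it is faithfully flat, hence $(h)B_{\mathfrak p}\cap A_{\mathfrak q}=(h)$ for any $h\in A_{\mathfrak q}$; writing $a=g/h$ with $g,h\in A_{\mathfrak q}$ (possible by step (ii)) gives $g=ha\in (h)B_{\mathfrak p}\cap A_{\mathfrak q}=(h)$, so $a\in A_{\mathfrak q}$. The paper's version is shorter and needs only flatness of $\pi$, not normality of $B$; your route is a perfectly valid alternative but does more work than necessary.
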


\begin{proof}
Both $\mathscr{O}_{X,x}^{\mathscr{F}}$ and $\mathscr{O}_{Y, \pi(x)}$ are subrings of the domain $\mathscr{O}_{X,x}$.
This implies that $\mathscr{O}_{Y, \pi(x)} \rightarrow \mathscr{O}_{X,x}$ is an injection.
It suffices to show it is a surjection. 
The proof is divided into three steps. Firstly, it is shown that the morphism $\mathscr{O}_{Y, \pi(x)} \rightarrow \mathscr{O}_{X,x}$ is generically finite. Secondly that it is birational and lastly, flatness is used to show that it is an isomorphism.
In the following, the results in \S \ref{subsec:first_integrals_two} can be used since, by hypothesis, $\mathscr{F}$ is locally free, hence torsion-free. \par

\textbf{Generically finite.}
Let $U \subseteq X$ be an affine open subset of $X$ containing $x$. 
For ease of notation, write $A^{\prime} = \Gamma \left( X, \mathscr{O}_X^{\mathscr{F}}\right)$, $A = \Gamma \left( U, \mathscr{O}_X^{\mathscr{F}}\right)$, $M = \mathscr{O}_{X,\eta}^{\mathscr{F}}$ and $B = \Gamma \left( X, \mathscr{O}_X\right)$. 
Because $B$ is a domain, $A$ and $A^{\prime}$, being subrings of $B$, are domains. 
Observe that there are field inclusions
$$ R(A^{\prime}) \rightarrow R(A) \rightarrow M \rightarrow R(B).$$
Furthermore
\begin{align*}
\mathrm{rank} \, \mathscr{F} &\leq \mathrm{tr\,deg}_{M} R(B) & \text{(Lemma \ref{lem:dim_first_integrals})} \\
&\leq \mathrm{tr\,deg}_{R(A)} R(B) & \\
&\leq\mathrm{tr\,deg}_{R(A^{\prime})} R(B) & \\
&= \mathrm{rank} \, \mathscr{F}. & \text{(Property \ref{stable:dimension})}
\end{align*}
Hence
\begin{align}
\label{eq:generically_finite}
\mathrm{tr\,deg}_{R(A^{\prime})} R(A) = 0
\end{align}

\textbf{Birationally equivalent.}
Let $U \subseteq X$ be an affine open subset of $X$ containing $x$. 
For ease of notation, write $A^{\prime} = \Gamma \left( X, \mathscr{O}_X^{\mathscr{F}}\right)$ and $A = \Gamma \left( U, \mathscr{O}_X^{\mathscr{F}}\right)$. 
Then the restriction morphism $A^{\prime} \rightarrow A$
induces an isomorphism of fields
\begin{align}
\label{eq:birational_equiv}
R(A^{\prime}) = R(A)
\end{align}
Indeed, applying (\ref{eq:generically_finite}) yields that $R(A^{\prime}) \rightarrow R(A)$ is an algebraic field extension and applying Proposition \ref{prop:birational_equivalence} gives the result. But taking the limit over all affine open subsets containing $x$ gives
\begin{align*}
R \left(\mathscr{O}_{X,x}^{\mathscr{F}} \right)
&= R \left( \lim_{\substack{\longrightarrow \\ U \ni x}} \mathscr{O}_X^{\mathscr{F}}(U) \right)
& \text{Lemma \ref{lem:localisation_well_defined}} \\
&= \lim_{\substack{\longrightarrow \\ U \ni x}}
R \left( \Gamma \left(U, \mathscr{O}_X^{\mathscr{F}}\right) \right)
& \substack{\text{commutativity} \\ \text{of localisations and colimits}} \\
&= R \left( \Gamma \left( X, \mathscr{O}_X^{\mathscr{F}}\right) \right) 
& (\ref{eq:birational_equiv}) \\
&= R \left( \mathscr{O}_{Y,\pi(x)} \right).\\
\end{align*}

\textbf{Isomorphic.} 
For ease of notation, write $A^{\prime} = \mathscr{O}_{Y, \pi(x)}$, $A = \mathscr{O}_{X,x}^{\mathscr{F}}$ and $B = \mathscr{O}_{X,x}$.
Let $a \in A$.
It is shown that it is in the image of $A^{\prime} \rightarrow A$.
By the second part of the proof, there exists $g$ and $h$ in $A^{\prime}$ such that $h a = g$.
Because $A^{\prime}$ is a domain, showing that $a \in A^{\prime}$ amounts to showing that $g \in (h) \subseteq A^{\prime}$.
To this end, consider the principal ideal $(h) \subseteq A^{\prime}$.
Because $A^{\prime} \rightarrow B$ is a flat morphism of local rings, it is faithfully flat.
This implies that
$$ (h) B \cap A^{\prime} = (h) \subseteq A^{\prime}.$$
But then $g \in A^{\prime}$ and $g \in (h) B$ since $g = h a$. This shows $g \in (h)$.
\end{proof}

\subsection{Generic Semistability}
\label{subsec:stability_generic}

This subsection shows that the generic point is stable.
A key step is to show that the generic fibre of the inclusion of first integrals into the regular functions is geometrically irreducible.
This is a consequence of Proposition \ref{prop:alg_closed}.

\begin{proposition}[The generic point is stable]
\label{prop:generic_stability}
Let $k$ be a field of characteristic zero and let $(X, \mathscr{F})$ be a normal integral foliated space locally of finite type over $k$.
Suppose that $\mathscr{F}$ is algebraically integrable, torsion-free and its first integrals are locally of finite type over $k$. 
Let $\eta$ be generic point of $X$.
Then $\eta$ is $\mathscr{F}$-stable.
\end{proposition}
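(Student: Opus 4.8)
The plan is to exhibit a single affine open $U' \subseteq X$ containing $\eta$ for which the induced morphism $\Gamma(U',\mathscr{O}_X^{\mathscr{F}}) \to \Gamma(U',\mathscr{O}_X)$ is smooth of relative dimension $r := \mathrm{rank}\,\mathscr{F}$ with geometrically connected fibres; by Definition \ref{def:stability} this is exactly $\mathscr{F}$-stability of $\eta$, and since $\eta$ is the generic point of $X$ it lies in every nonempty open subset, so we are free to shrink at will. Following the proof of Lemma \ref{lem:traditional_algebraic_integrability} (and using that the first integrals are locally of finite type together with Lemma \ref{lem:localisation_well_defined}), I would first fix an affine open $U \ni \eta$ such that, writing $A := \Gamma(U,\mathscr{O}_X^{\mathscr{F}})$, $B := \Gamma(U,\mathscr{O}_X)$, $V := \mathrm{Spec}\,A$ and $\pi : U \to V$ for the induced morphism, one has: $A$ a finitely generated $k$-algebra with $R(A) = \mathscr{O}_{X,\eta}^{\mathscr{F}}$, and $\Omega_{U/V}^1 \xrightarrow{\sim} \iota^*\mathscr{F}$, locally free of rank $r$. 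Since $X$ is integral, $A \hookrightarrow B$, so $\pi$ is dominant; let $\xi$ be the generic point of $V$, i.e. the image of $\eta$.

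Next I would analyse the generic fibre $U_\xi = U \times_V \mathrm{Spec}\,R(A) = \mathrm{Spec}\,C$ with $C := B \otimes_A R(A) = (A \setminus \{0\})^{-1}B$. Being a localisation of the domain $B$, this is a domain with fraction field $R(B) = \kappa(\eta)$; in particular $U_\xi$ is reduced and irreducible. It is of finite type over the field $R(A)$, of Krull dimension $\mathrm{tr\,deg}_{R(A)}\kappa(\eta) = \mathrm{tr\,deg}_{\mathscr{O}_{X,\eta}^{\mathscr{F}}}\mathscr{O}_{X,\eta} = r$ by algebraic integrability, and $\Omega_{C/R(A)}^1 = \Omega_{U/V}^1 \otimes_A R(A) \cong \iota^*\mathscr{F} \otimes_A R(A)$ is locally free of rank $r$. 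Because $k$ has characteristic zero, $R(A)$ is perfect, so every residue field of $C$ is separable over $R(A)$; hence, $C$ being equidimensional of dimension $r$ with $\Omega_{C/R(A)}^1$ locally free of rank $r$, the Jacobian criterion shows $U_\xi \to \mathrm{Spec}\,R(A)$ is smooth of relative dimension $r$.

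The heart of the argument, and the step I expect to be the main obstacle, is that $U_\xi$ is \emph{geometrically} integral. I would apply Proposition \ref{prop:alg_closed} to the integral directed space $(\mathrm{Spec}\,\mathscr{O}_{X,\eta},\mathscr{F}_\eta)$ — here $\mathscr{F}_\eta$ is torsion-free, being a vector space over the field $\mathscr{O}_{X,\eta}$ — to conclude that the ring map $\mathscr{O}_{X,\eta}^{\mathscr{F}} \to \mathscr{O}_{X,\eta} = \kappa(\eta)$ is algebraically closed; as $\mathscr{O}_{X,\eta}^{\mathscr{F}}$ is a field (Lemma \ref{lem:morphism_local_rings}), this says precisely that $R(A) = \mathscr{O}_{X,\eta}^{\mathscr{F}}$ is algebraically closed in $\kappa(\eta)$. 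Since $\kappa(\eta)/R(A)$ is also separable (characteristic zero), it is a regular extension, so $\kappa(\eta) \otimes_{R(A)} \overline{R(A)}$ is a domain. But this ring is the localisation of $C \otimes_{R(A)} \overline{R(A)}$ at the image of $C \setminus \{0\}$, and $C \otimes_{R(A)} \overline{R(A)}$ is reduced (it embeds in $\kappa(\eta)\otimes_{R(A)}\overline{R(A)}$ by flatness of $\overline{R(A)}$ over $R(A)$) with every minimal prime contracting to $(0) \subseteq C$ by faithful flatness; hence $C \otimes_{R(A)} \overline{R(A)}$ has a unique minimal prime and, being reduced, is a domain. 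Thus $U_\xi$ is geometrically integral, in particular geometrically connected.

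Finally I would spread out. Smoothness, local constancy of the relative dimension, and geometric irreducibility of the fibres of $\pi$ all hold over some nonempty open $V_0 \subseteq V$ once they hold at $\xi$ (standard spreading-out and limit arguments, using that $V$ is integral and $\pi$ is of finite type). Shrinking, one may take $V_0 = D(g) = \mathrm{Spec}\,A_g$ for a nonzero $g \in A$; then $\pi^{-1}(D(g))$ is the distinguished open $U' := U_g \subseteq X$, it contains $\eta$, it is affine with $\Gamma(U',\mathscr{O}_X) = B_g$, and by Lemma \ref{lem:distinguished_first_integrals} (applied on $U$ with the first integral $g$) $\Gamma(U',\mathscr{O}_X^{\mathscr{F}}) = A_g$. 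Therefore the morphism $\Gamma(U',\mathscr{O}_X^{\mathscr{F}}) = A_g \to B_g = \Gamma(U',\mathscr{O}_X)$, which is $\pi|_{U'} : U' \to D(g)$, is smooth of relative dimension $r$ with geometrically connected fibres, and Definition \ref{def:stability} gives that $\eta$ is $\mathscr{F}$-stable. All steps other than the geometric-integrality argument of the third paragraph are routine dimension bookkeeping and spreading-out.
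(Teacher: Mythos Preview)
Your proposal is correct and follows essentially the same route as the paper: start from the affine open furnished by Lemma~\ref{lem:traditional_algebraic_integrability}, use Proposition~\ref{prop:alg_closed} at the generic point to see that $R(A)=\mathscr{O}_{X,\eta}^{\mathscr{F}}$ is algebraically closed in $R(B)=\mathscr{O}_{X,\eta}$ so the generic fibre is geometrically irreducible, then spread out smoothness, the relative dimension, and geometric irreducibility to a basic open $D(g)$ and invoke Lemma~\ref{lem:distinguished_first_integrals}. The only cosmetic difference is that the paper first secures flatness via generic flatness and then deduces smoothness from flat~$+$~smooth fibres, whereas you verify smoothness of the generic fibre directly and spread it out in one step; both are standard and yield the same conclusion.
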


\begin{proof}
Let $\iota : U \rightarrow X$ be an affine open subset such that the induced $k$-morphism
$$ U = \mathrm{Spec}\, \Gamma \left( U, \mathscr{O}_X \right) \rightarrow \mathrm{Spec}\, \Gamma \left( U, \mathscr{O}_X^{\mathscr{F}} \right) = V $$
gives an isomorphism of locally free sheaves
$$ \Omega_{U/V}^1 \xrightarrow{\sim} \iota^* \mathscr{F}.$$
To get such an open set, apply Lemma \ref{lem:traditional_algebraic_integrability}.
For ease of notation, replace $X$ by $U$.
Note that any affine open subset contained in $X$ also satifies the property above. \par

Now, the morphism
$$ \Gamma \left( X, \mathscr{O}_X^{\mathscr{F}} \right) \rightarrow \Gamma \left( X, \mathscr{O}_X \right) $$
is of finite type.
This follows from the fact that $k \rightarrow \Gamma \left( X, \mathscr{O}_X \right)$ is of finite type and \cite[\href{https://stacks.math.columbia.edu/tag/00F4}{Lemma 00F4}]{stacks-project}. 
Hence, by generic flatness (\cite[\href{https://stacks.math.columbia.edu/tag/051T}{Lemma 051T}]{stacks-project}), there exists an $f \in \Gamma \left( X, \mathscr{O}_X^{\mathscr{F}} \right)$ such that 
$$ \Gamma \left( X, \mathscr{O}_X^{\mathscr{F}} \right)_f \rightarrow \Gamma \left( X, \mathscr{O}_X \right)_f $$
is flat and of finite presentation.
Now, by Lemma \ref{lem:distinguished_first_integrals}, $\Gamma \left( X, \mathscr{O}_X^{\mathscr{F}} \right)_f = \Gamma \left( X_f, \mathscr{O}_X^{\mathscr{F}} \right)$.
Therefore, up to replacing $X$ by its affine open subset $X_f$, it may be assumed that
\begin{align}
\label{eq:first_integrals}
\Gamma \left( X, \mathscr{O}_X^{\mathscr{F}} \right) \rightarrow \Gamma \left( X, \mathscr{O}_X \right)
\end{align}
is flat of finite presentation and its relative sheaf of differentials is locally free of rank equal to $\mathrm{rank}\, \mathscr{F}$.
Now, the sheaf of differentials of the fibres of (\ref{eq:first_integrals}) is locally free.
Given that the characteristic of the base field is zero, this implies that the fibres are smooth (\cite[\href{https://stacks.math.columbia.edu/tag/04QN}{Lemma 04QN}]{stacks-project}).
Therefore (\ref{eq:first_integrals}) is smooth (\cite[\href{https://stacks.math.columbia.edu/tag/01V8}{Lemma 01V8}]{stacks-project}).
Thus \ref{stable:smooth} is shown. \par

It is clear, using \cite[\href{https://stacks.math.columbia.edu/tag/02G2}{Definition 02G2}]{stacks-project}, that (\ref{eq:first_integrals}) is of relative dimension $\mathrm{rank}\,\mathscr{F}$.
Thus \ref{stable:dimension} is satisfied. \par

For ease of notation, write $A = \Gamma \left( X, \mathscr{O}_X^{\mathscr{F}} \right)$, $B = \Gamma \left( X, \mathscr{O}_X \right)$ and $F = \Gamma \left( X, \mathscr{F}\right)$
By construction, the field of fraction of $A$, $R(A)$, is equal to $\mathscr{O}_{X, \eta}^{\mathscr{F}}$.
Clearly the field of fractions of $B$, $R(B) = \mathscr{O}_{X, \eta}$.
It is now shown that that the generic fibre $B \otimes_A R(A)$ is geometrically irreducible over $R(A)$.
To see this, note that the field of fractions of $B \otimes_A R(A)$ is $R(B)$.
Now, by Proposition \ref{prop:alg_closed} applied to the directed space $\left( \mathrm{Spec}\, R(B),  F \otimes_B R(B) \right)$, the field extension
$$ R(A) = \mathscr{O}_{X, \eta}^{\mathscr{F}} \rightarrow \mathscr{O}_{X, \eta} = R(B)$$
is algebraically closed.
Therefore $R(A) \rightarrow R(B)$ is geometrically irreducible (\cite[\href{https://stacks.math.columbia.edu/tag/037P}{Lemma 037P}]{stacks-project}).
Since $R(B)$ is the localisation of the generic fibre $B \otimes_A R(A)$ at its generic point, $B \otimes_A R(A)$ is geometrically irreducible over $R(A)$ (\cite[\href{https://stacks.math.columbia.edu/tag/054Q}{Lemma 054Q}]{stacks-project}).
By Property \ref{stable:smooth}, $A \rightarrow B$ is a morphism of finite type and $A$ is an integral domain.
Therefore \cite[\href{https://stacks.math.columbia.edu/tag/0559}{Lemma 0559}]{stacks-project} implies that there exists an $f \in A$ such that $A_f \rightarrow B_f$ has geometrically irreducible fibres, hence Property \ref{stable:connected} is satisfied.
Observe that $B_f = B \otimes_A A_f$ and since Property \ref{stable:smooth} is preserved by base change, $A_f \rightarrow B_f$ satisfies Property \ref{stable:smooth}.
Property \ref{stable:dimension} is also preserved.
Let $U = X_f \subseteq X$ be the affine open set where $f$ does not vanish. Then, again by Lemma \ref{lem:distinguished_first_integrals}
$$ A_f = \Gamma \left( X, \mathscr{O}_X^{\mathscr{F}} \right)_f = \Gamma \left( U, \mathscr{O}_X^{\mathscr{F}} \right).$$
Clearly $B_f = \Gamma \left( U, \mathscr{O}_X \right)$ and the Proposition is proven.
\end{proof}

\section{Main Theorems}
\label{sec:main_theorems}

\begin{theorem}[Main Theorem I]
\label{thm:main1}
Let $k$ be a field of characteristic zero and let $(X, \mathscr{F})$ be a normal integral foliated space locally of finite type over $k$.
Suppose that $\mathscr{F}$ is algebraically integrable and its first integrals are locally of finite type over $k$. 
Let $X^{\mathrm{s}}$ denote the set of $\mathscr{F}$-stable points. 
Then $X^{\mathrm{s}}$ is a non-empty open set and there exists a geometric (and categorical) quotient $\pi : X^{\mathrm{s}} \rightarrow Y$ of $X^{\mathrm{s}}$ by $\mathscr{F}$, where $Y$ is a normal integral scheme locally of finite type over $k$.
\end{theorem}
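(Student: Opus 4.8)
The plan is to assemble the theorem from the local geometric quotients of \S \ref{subsec:stability_quotient}, using generic stability to know the construction is non-vacuous, and then to read off the properties of $Y$ from those of the rings of first integrals.

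First I would dispose of the two assertions about $X^{\mathrm{s}}$. Openness is Remark \ref{rem:stable_open}. For non-emptiness I would show that the generic point $\eta$ of $X$ is $\mathscr{F}$-stable. Since $\mathscr{F}$ is algebraically integrable, Lemma \ref{lem:traditional_algebraic_integrability} provides a non-empty affine open $\iota : U \rightarrow X$ with $\Omega_{U/V}^1 \xrightarrow{\sim} \iota^{\#}\mathscr{F}$; in particular $\iota^{\#}\mathscr{F}$ is a torsion-free foliation on the normal integral scheme $U$ (Lemma \ref{lem:etale_pullback}), it is still algebraically integrable since $\mathscr{O}_{X,\eta}^{\mathscr{F}} \subseteq \mathscr{O}_{X,\eta}$ and $\mathrm{rank}\,\mathscr{F}$ are unchanged, and its first integrals are locally of finite type because the affine opens of $U$ are affine opens of $X$. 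Proposition \ref{prop:generic_stability} then shows $\eta$ is stable for $(U,\iota^{\#}\mathscr{F})$, witnessed by some affine open $W \subseteq U$; as $\Gamma(W,\mathscr{O}_X^{\mathscr{F}})$ and $\Gamma(W,\mathscr{O}_X)$ do not depend on whether $W$ is regarded in $U$ or in $X$, the same $W$ witnesses $\eta \in X^{\mathrm{s}}$. Since $X$ is irreducible, $X^{\mathrm{s}}$ is then a dense, hence irreducible, open subset.

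Next I would build $\pi$ by gluing. Let $\{U_i\}_{i \in I}$ be the affine open sets of $X^{\mathrm{s}}$ supplied by Definition \ref{def:stability} for the points of $X^{\mathrm{s}}$; these cover $X^{\mathrm{s}}$, and each $\Gamma(U_i,\mathscr{O}_X^{\mathscr{F}}) \rightarrow \Gamma(U_i,\mathscr{O}_X)$ is smooth of relative dimension $\mathrm{rank}\,\mathscr{F}$ with geometrically connected fibres. By the same inheritance remarks as above, each $(U_i,\iota_i^{\#}\mathscr{F})$, with $\iota_i^{\#}\mathscr{F} = \mathscr{F}|_{U_i}$ (Lemma \ref{lem:etale_pullback}), satisfies the hypotheses of Proposition \ref{prop:stable_geometric quotient}, so that $\pi_i : U_i \rightarrow V_i = \mathrm{Spec}\,\Gamma(U_i,\mathscr{O}_X^{\mathscr{F}})$ is a geometric quotient of $(U_i,\iota_i^{\#}\mathscr{F})$. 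Proposition \ref{prop:glue_geometric_quotient} then produces a geometric quotient $\pi : X^{\mathrm{s}} \rightarrow Y$, and Proposition \ref{prop:geometric_categorical_quotient} upgrades it to the categorical quotient.

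Finally I would identify $Y$. It is glued from the $V_i = \mathrm{Spec}\,A_i$ with $A_i = \mathscr{O}_X^{\mathscr{F}}(U_i)$, and each $A_i$ is a finitely generated $k$-algebra (finite generation of first integrals) that is a normal domain (Lemma \ref{lem:normal_domains}); hence $Y$ is locally of finite type over $k$, normal, and reduced. Moreover $\pi$ is surjective: over any $y \in Y$, a geometric point has, by Property \ref{geometric:functor}, fibre a smooth algebraic leaf of $\mathscr{F}$, which in particular is a non-empty scheme, so $y \in \pi(X^{\mathrm{s}})$. Thus $Y = \pi(X^{\mathrm{s}})$ is the continuous image of the irreducible space $X^{\mathrm{s}}$ and so is irreducible, whence integral. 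I expect the real work to lie entirely in the inputs --- Propositions \ref{prop:generic_stability}, \ref{prop:stable_geometric quotient} and \ref{prop:glue_geometric_quotient} --- and the only point in this proof that deserves care to be the verification that algebraic integrability, finite generation of first integrals, normality and the foliated structure all descend to $(U_i,\iota_i^{\#}\mathscr{F})$, together with the fact that first integrals computed on an affine open $W \subseteq U_i$ coincide with those computed on $X$; these are exactly what license the use of the local results.
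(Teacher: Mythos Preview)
Your proof is correct and follows essentially the same route as the paper: openness via Remark \ref{rem:stable_open}, non-emptiness via Proposition \ref{prop:generic_stability}, local geometric quotients via Proposition \ref{prop:stable_geometric quotient}, gluing via Proposition \ref{prop:glue_geometric_quotient}, categoricity via Proposition \ref{prop:geometric_categorical_quotient}, and the properties of $Y$ read off from Lemma \ref{lem:normal_domains} together with irreducibility of the image. Your extra care in passing to an open set where $\mathscr{F}$ is locally free before invoking Proposition \ref{prop:generic_stability} (which requires $\mathscr{F}$ torsion-free, a hypothesis not in the statement of Theorem \ref{thm:main1}) and your explicit justification of surjectivity of $\pi$ are both warranted and in fact fill in small points that the paper's own proof leaves implicit.
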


\begin{proof}
By Remark \ref{rem:stable_open}, $X^{\mathrm{s}}$ is open.
By Proposition \ref{prop:generic_stability}, $X^{\mathrm{s}}$ is non-empty.
By Proposition \ref{prop:stable_geometric quotient}, for every $x \in X^{\mathrm{s}}$, there exists an open set $U \subseteq X$ containing $x$ and a geometric quotient $\pi_U : U \rightarrow V$.
By Proposition \ref{prop:glue_geometric_quotient}, these morphisms can be glued to obtain a geometric quotient $\pi : X^{\mathrm{s}} \rightarrow Y$.
By Proposition \ref{prop:geometric_categorical_quotient}, $\pi$ is a categorical quotient.
Finally, again using Proposition \ref{prop:stable_geometric quotient}, it follows that $Y$ can be covered by affine open sets of the form $\mathrm{Spec} \, \Gamma \left(U, \mathscr{O}_X^{\mathscr{F}} \right)$ for some affine open sets $U \subseteq X$.
This implies that $Y$ is locally of finite type, reduced and normal (Lemma \ref{lem:normal_domains}.
$Y$ is also irreducible, since it is the image of a surjective morphism whose source is irreducible (\cite[\href{https://stacks.math.columbia.edu/tag/0379}{Lemma 0379}]{stacks-project}), hence $Y$ is integral.
\end{proof}

\begin{theorem}[Main Theorem II]
\label{thm:main2}
Let $k$ be a field of characteristic zero and let $(X, \mathscr{F})$ be a normal integral foliated space locally of finite type over $k$.
Suppose that $\mathscr{F}$ is algebraically integrable and $\mathrm{corank} \,\mathscr{F} \leq 2$. 
Let $X^{\mathrm{s}}$ denote the set of $\mathscr{F}$-stable points. 
Then $X^{\mathrm{s}}$ is a non-empty open set and there exists a geometric (and categorical) quotient $\pi : X^{\mathrm{s}} \rightarrow Y$ of $X^{\mathrm{s}}$ by $\mathscr{F}$, where $Y$ is a normal integral scheme locally of finite type over $k$.
\end{theorem}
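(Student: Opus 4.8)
The plan is to deduce Theorem \ref{thm:main2} directly from Theorem \ref{thm:main1}. The two statements have identical conclusions, and their hypotheses coincide except that Theorem \ref{thm:main1} asks, in place of $\mathrm{corank}\,\mathscr{F} \leq 2$, that the first integrals of $\mathscr{F}$ be locally of finite type over $k$. So the entire content of the proof is to produce this finiteness from the corank bound, after which one simply invokes Theorem \ref{thm:main1}.

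The finiteness is exactly what Corollary \ref{cor:fin_gen} provides; the one hypothesis of that corollary not already granted is that $\mathscr{F}$ be torsion-free. I would dispose of this by localising, observing that in the proof of Theorem \ref{thm:main1} finite generation of first integrals is only ever used on affine opens over which $\mathscr{F}$ is locally free, hence torsion-free. There are two such families of opens. First, by Lemma \ref{lem:traditional_algebraic_integrability} there is a non-empty affine open $U_0 \subseteq X$ on which $\mathscr{F}$ is isomorphic to the sheaf of relative differentials of a smooth morphism, so $\mathscr{F}|_{U_0}$ is locally free. Second, if $x \in X$ is $\mathscr{F}$-stable and $U$ is the affine open of Definition \ref{def:stability}, then every point of $U$ is $\mathscr{F}$-stable (Remark \ref{rem:stable_open}), so $\mathscr{F}$ has free stalks on all of $U$ by Lemma \ref{lem:stable_smooth}, whence $\mathscr{F}|_U$ is locally free. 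On any such open $U$, the pair $(U, \mathscr{F}|_U)$ is a torsion-free distribution of corank at most two on a normal integral scheme locally of finite type over a field of characteristic zero, and it is still algebraically integrable, since restriction to a dense open does not change the ring of first integrals at the generic point. Corollary \ref{cor:fin_gen} therefore shows that $\mathscr{O}_X^{\mathscr{F}}(W)$ is of finite type over $k$ for every affine open $W \subseteq U$.

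It then only remains to reproduce the proof of Theorem \ref{thm:main1}. Applying Proposition \ref{prop:generic_stability} to $(U_0, \mathscr{F}|_{U_0})$ and using that $\mathscr{F}$-stability is tested on affine opens shows that the generic point of $X$ is $\mathscr{F}$-stable, so $X^{\mathrm{s}}$ is non-empty; it is open by Remark \ref{rem:stable_open}. For each $x \in X^{\mathrm{s}}$, Proposition \ref{prop:stable_geometric quotient} applied to $(U, \mathscr{F}|_U)$, with $U$ the neighbourhood above, produces a geometric quotient of a neighbourhood of $x$; these are glued by Proposition \ref{prop:glue_geometric_quotient} into a geometric quotient $\pi : X^{\mathrm{s}} \rightarrow Y$, which is categorical by Proposition \ref{prop:geometric_categorical_quotient}. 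Finally $Y$ is covered by affine schemes of the form $\mathrm{Spec}\,\Gamma(U, \mathscr{O}_X^{\mathscr{F}})$, so it is locally of finite type and, by Lemma \ref{lem:normal_domains}, normal; and it is irreducible as the image of the irreducible scheme $X^{\mathrm{s}}$, hence integral.

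I expect the only genuine obstacle to be the torsion-free reduction in the second paragraph, that is, verifying that the sheaf of first integrals, the notion of $\mathscr{F}$-stability, and the notion of geometric quotient are all local on $X$, so that replacing $\mathscr{F}$ by its restriction to an open on which it is locally free loses nothing. All the needed localisation statements — the sheaf property of $\mathscr{O}_X^{\mathscr{F}}$, Lemma \ref{lem:etale_pullback}, Lemma \ref{lem:open_geometric_quotient}, and Lemma \ref{lem:directed_stalk_local} — are already available, so no new ideas beyond those used for Theorem \ref{thm:main1} are required. If one reads ``algebraically integrable foliation'' as already incorporating torsion-freeness, the argument collapses to the single line: apply Corollary \ref{cor:fin_gen} and then Theorem \ref{thm:main1}.
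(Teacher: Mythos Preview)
Your proposal is correct and follows the same strategy as the paper: reduce to the torsion-free case so that Corollary~\ref{cor:fin_gen} applies, then invoke Theorem~\ref{thm:main1}. The paper packages the reduction more crisply than you do: rather than tracing through the proof of Theorem~\ref{thm:main1} and checking at each step that finite generation is only invoked on opens where $\mathscr{F}$ is locally free, it observes once (via Lemma~\ref{lem:stable_smooth}) that $X^{\mathrm{s}}$ is entirely contained in the open locus where $\mathscr{F}$ is locally free, replaces $X$ by that locus, and then applies Theorem~\ref{thm:main1} as a black box. Your argument proves the same containment implicitly (you note that every point of a stable open $U$ is stable, hence $\mathscr{F}|_U$ is locally free), so the only difference is that you do not extract this as a single global replacement of $X$; doing so would collapse your middle two paragraphs into the one-line reduction you anticipate at the end.
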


\begin{proof}
Up to replacing $X$ by the open set where $\mathscr{F}$ is locally free, it may be assumed that $\mathscr{F}$ is locally free on $X$.
Indeed, Lemma \ref{lem:stable_smooth} shows that $X^{\mathrm{s}}$ is contained in the open set where $\mathscr{F}$ is locally free.
In particular, $\mathscr{F}$ is torsion-free and Corollary \ref{cor:fin_gen} guarantees that its first integrals are locally of finite type over $k$.
Theorem \ref{thm:main1} can then be applied to find a geometric (and categorical) quotient $\pi : X^{\mathrm{s}} \rightarrow Y$.
\end{proof}

\bibliography{Quotient_Spaces2}
\bibliographystyle{amsplain}

\end{document}